\numberwithin{equation}{section} 
\numberwithin{figure}{section} 
\newtheorem{thm}{Theorem}
\newtheorem{cor}{Corollary}[section]
\newtheorem{lem}[cor]{Lemma}
 \newtheorem{prop}[cor]{Proposition}
 \newcommand\proc{\noindent\textit}
\newcommand{\1}{\mathbbm{1}}
\newcommand{\e}{\mathrm e}
\DeclareMathOperator{\Int}{Int}
\renewcommand{\phi}{\varphi}
\DeclareMathOperator{\dom}{dom}
\def\R{\mathbb{R}}
\def\N{\mathbb{N}}
\def\U{\mathcal{U}}
\begin{document}
\title[Strong renewal theorems and Lyapunov spectra]
{Strong renewal theorems and Lyapunov spectra for
$\boldsymbol\alpha$-Farey and $\boldsymbol\alpha$-L\"uroth systems}

\author{Marc Kesseb{\"o}hmer}

\address{Fachbereich 3 -- Mathematik und Informatik, Universit\"at Bremen, Bibliothekstr.
1, 28359 Bremen, Germany}
\email{mhk@math.uni-bremen.de}

\author{Sara Munday}

\address{Mathematical Institute, University of St. Andrews, North Haugh, St.
Andrews KY16 9SS, Scotland}

\email{sam20@maths.st-and.ac.uk}

\author{Bernd O. Stratmann}

\address{Fachbereich 3 -- Mathematik und Informatik, Universit\"at Bremen, Bibliothekstr.
1, 28359 Bremen, Germany}

\email{bos@math.uni-bremen.de}

\subjclass{Primary 37A45; Secondary 11J70, 11J83 28A80, 20H10}

\date{\today}

\keywords{Continued fractions, L\"uroth expansions, thermodynamical
formalism, renewal theory,
multifractals, infinite ergodic theory, phase transition,
intermittency,
Stern--Brocot sequence, Gauss map, Farey map, L\"uroth map}
\begin{abstract}
In this paper we introduce and study the $\alpha$-Farey
map  and  its associated jump transformation, the
$\alpha$-L\"uroth map, for an arbitrary countable partition $\alpha$
of the unit interval with atoms which accumulate only at the origin.
These maps represent linearised generalisations of the Farey map
and the Gauss map from elementary number theory.
First, a thorough analysis of  some of their
topological and ergodic-theoretic properties is given, including
establishing exactness for both types of these maps.
The first main result then is to establish weak and strong renewal laws for
what we have called $\alpha$-sum-level sets for the
$\alpha$-L\"uroth map.  Similar results have previously been obtained for the
Farey map
and the Gauss map, by using infinite ergodic theory. In this respect, a side
product  of  the paper is to
allow for greater transparency of some of the core ideas of infinite ergodic
theory. The second remaining result is to  obtain a
complete description of the Lyapunov spectra of the
$\alpha$-Farey
map  and  the
$\alpha$-L\"uroth map
in terms of the thermodynamical formalism. We show how to derive these spectra,
and then give various examples which demonstrate the diversity of their
behaviours in dependence on the chosen partition $\alpha$.
\end{abstract}
\maketitle

\section{Introduction and statement of results}
In this paper we consider the {\em $\alpha$-Farey map}
$F_{\alpha}:\U \to \U$, which is
given for a countable partition  $\alpha:=\{A_n:n\in\N\}$  of
the unit interval $\mathcal{U}:=[0,1]$  by
\[
F_{\alpha}(x):=\left\{
        \begin{array}{ll}
          (1-x)/a_1, & \hbox{if $x\in A_1$,} \\
          {a_{n-1}}(x-t_{n+1})/a_{n}+t_n, & \hbox{if $x\in A_n$, for  $n\geq2$,}
\\
	  0, & \hbox{if $x=0$, }
        \end{array}
      \right.
\]
where $a_n$ is equal to the Lebesgue measure $\lambda(A_n)$ of the
atom $A_n\in\alpha$, and  $t_n:=\sum_{k=n}^\infty a_k$
denotes the Lebesgue measure of the $n$-th tail of $\alpha$. (It is
assumed throughout that $\alpha$ is a countable partition of $\U$ consisting of left open, right closed intervals; also,  we always assume that the atoms of $\alpha$ are ordered from right to left,
starting with $A_1$,  and that these atoms accumulate only at the origin.) Similarly to the way in which the Gauss
map coincides with
the jump transformation of  the Farey map with respect to the
interval $(1/2,1]$, one finds that the map $F_{\alpha}$ gives rise to
the jump transformation $L_{\alpha}$ with respect to
the interval $A_{1}$. It turns out that for the \textit{harmonic partition}
$\alpha_H$, given by $a_{n}:= 1/(n(n+1))$, we have
that the so-obtained jump transformation $L_{\alpha_H}$
coincides with the {\em alternating
L\"{u}roth map} (see \cite{KKK}). For a general partition $\alpha$, we therefore refer to
 $L_{\alpha}$ as the  {\em $\alpha$-L\"{u}roth map}, and we will
 see that this map
is explicitly given by
\[
L_{\alpha}(x):=
\left\{
	\begin{array}{ll}
	    ({t_n-x})/a_n, & \text{ if }x\in A_n,\ n\in\N,\\
	  0, & \hbox{ if } x=0.
	\end{array}
      \right.
\]

Note that this type of generalised L\"uroth map has also been
investigated, amongst others,  in
\cite{BBDK} and \cite{DK1}. Also, a class of maps
very  similar to
our class of $\alpha$-Farey maps has been considered in \cite{XJW}.
\begin{figure}[ht!]
\begin{center}
\includegraphics[width=0.4\textwidth]{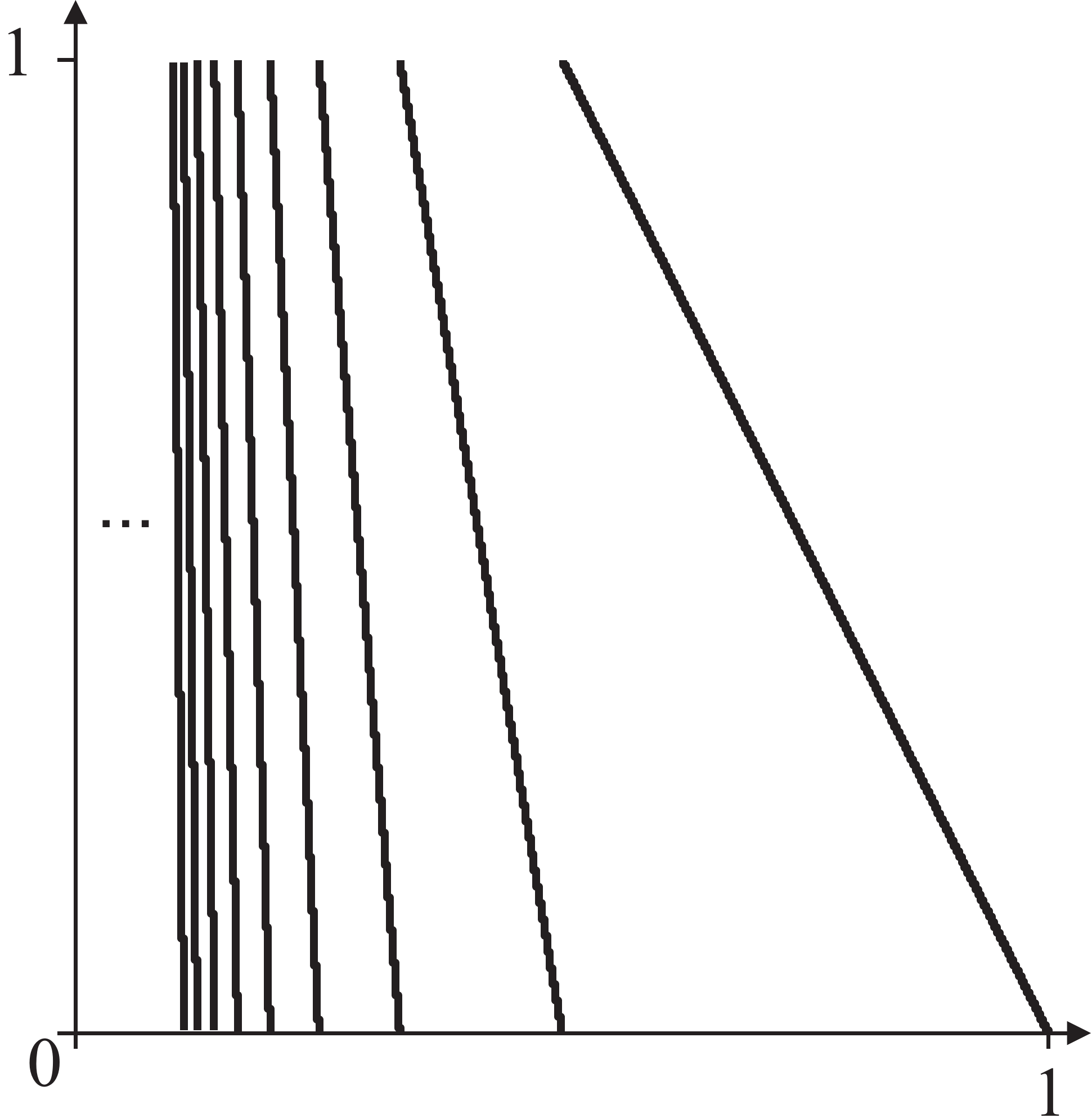}\hspace{0.1\textwidth}
\includegraphics[width=0.4\textwidth]{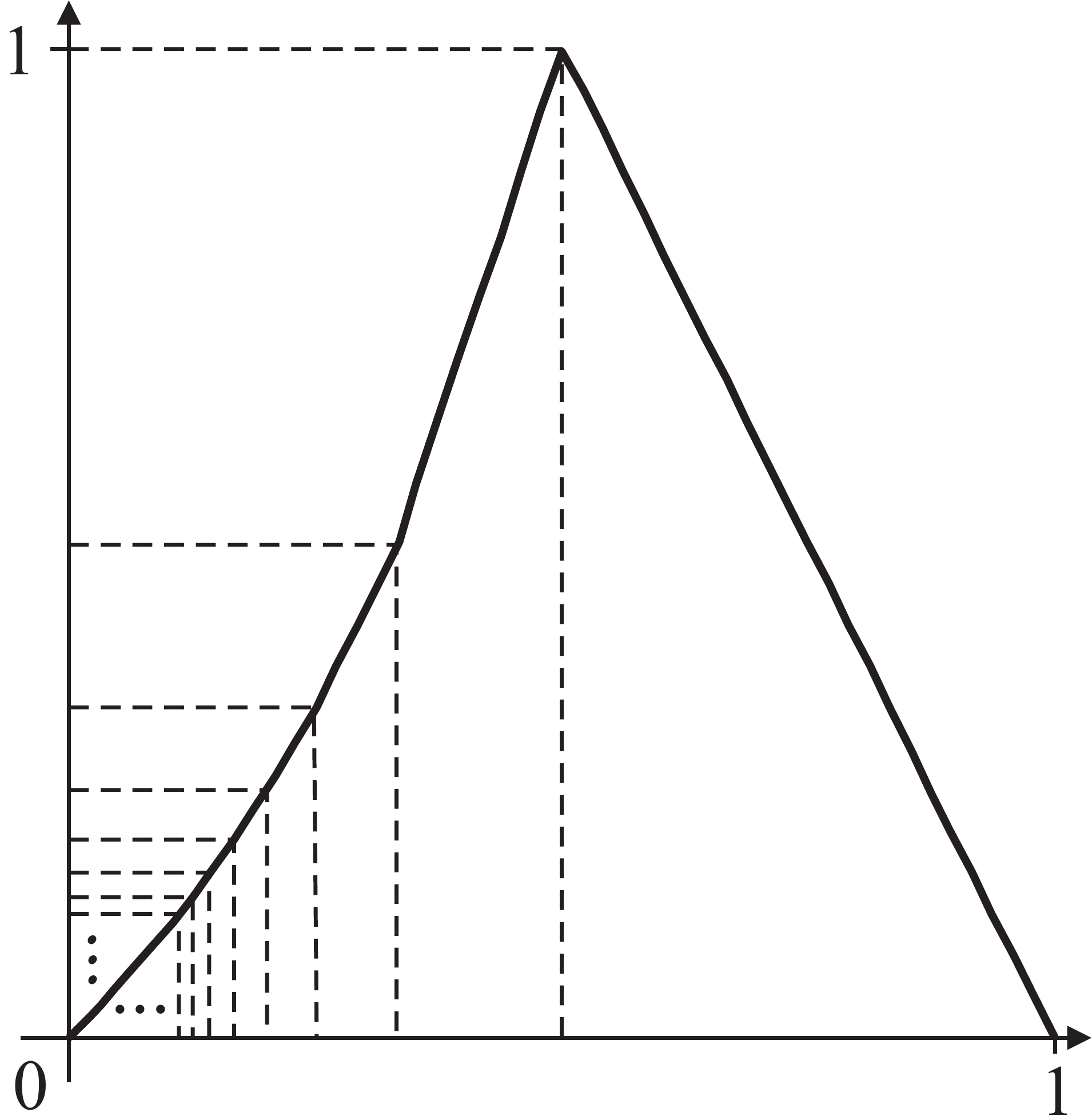}
\caption{The alternating  L\"uroth and $\alpha_H$-Farey map, where $t_{n}=1/n$, $n\in \N$.}\end{center}
\end{figure}\label{fig:ClassicalLF}

The main goal of this paper is to give a thorough analysis of the
two maps $F_{\alpha}$ and $L_{\alpha}$.
This includes the  study of the sequence of $\alpha$-sum-level
sets $\left(\mathcal{L}^{(\alpha)}_n\right)_{n\in \N}$ arising from
the $\alpha$-L\"uroth map, for an arbitrary given partition $\alpha$.
These sets
are defined by
\[
\mathcal{L}^{(\alpha)}_n:=\left\{x \in C_{\alpha}(\ell_1, \ell_2, \ldots,
\ell_k):\sum_{i=1}^k \ell_i=n, \hbox{ for some }  k\in\N\right\},
\]
where
$C_{\alpha}(\ell_1, \ell_2, \ldots,
\ell_k):=\{x\in\U:L_\alpha^{i-1}(x)\in A_{l_i}, \text{ for  all } i=1, \ldots, k\}$
denotes a cylinder set arising from the map $L_{\alpha}$.  The sets
$\mathcal{L}^{(\alpha)}_n$ can also be written dynamically in terms of
$F_{\alpha}$, that is, one  immediately verifies that
$\mathcal{L}^{(\alpha)}_n= F_{\alpha}^{-(n-1)}(A_{1})$, for all $n \in
\N$.

Throughout,
$\alpha$  is said to be of
{\em finite type} if
for the tails $t_{n}$ of $\alpha$ we have that $\sum_{n=1}^{\infty}
t_{n}$ converges, otherwise $\alpha$  is said to be of
\textit{infinite type}.
Moreover,  a partition $\alpha$ is called {\em expansive of exponent $\theta$} if
its tails
satisfy the power law $t_{n} = \psi(n) n^{-\theta}$,
for all $n \in \N$, for some
$\theta \geq 0$ and for some
slowly varying\footnote{A measurable function $f:\R^{+} \to \R^{+}$ is said
to be
{\em slowly varying}  if $
\lim_{x\to\infty}f(x y)/f(x)=1$, for all $y>0$.} function $\psi$.
Note that in this situation we have that $\lim_{n\to\infty}
t_{n}/t_{n+1}=1$, and hence the right derivative of $F_{\alpha}$ at
zero is equal to $1$, which explains  why  this type of partition
is referred to as expansive.

Also, a partition
$\alpha$ is said to be \textit{expanding} if
$\lim_{n\to \infty} t_{n}/t_{n+1}= \rho$, for some $\rho>1$.
In this situation we have that the right derivative of $F_{\alpha}$ at
zero is equal to $\rho$, and that  is why  we refer to it as
expanding (cf. Lemma \ref{eq:asympExpansive} (2)). Clearly, if $\alpha $ is expanding, then  $F_\alpha$ is of finite type.
Furthermore,  a partition $\alpha$ is called {\em eventually decreasing} if
$a_{n+1}\leq a_n$, for all $n\in \N$ sufficiently large.

Throughout, we use the notation $a_n\sim b_n$ to denote $\lim_{n\to\infty} a_n/b_n=1$.
 \begin{thm}[Renewal laws for sum-level sets]\label{renewal}\hspace{3cm}
  \begin{enumerate}\item  For the Lebesgue measure $\lambda(\mathcal{L}^{(\alpha)}_n)$ of the
    $\alpha$-sum-level sets of a given partition $\alpha$ of $\U$
    we have that $\sum_{n=1}^{\infty}
\lambda(\mathcal{L}^{(\alpha)}_n)$
diverges, and that
       \[
       \lim_{n\to\infty}\lambda
       \left(\mathcal{L}^{(\alpha)}_n\right)
       =\left\{
\begin{array}{ll}
0 ,& \hbox{if   $\alpha$  is of
infinite type;}\\
\left(\sum_{k=1}^\infty t_{k}\right)^{-1}, &  \hbox{if   $\alpha$  is of
finite type.}
\end{array}
\right.	\]
\item
For a given  partition
$\alpha$ which is either    expansive of exponent $\theta\in [0,1]$ or  of finite type,
we have  the following
estimates for the asymptotic behaviour of
$\lambda(\mathcal{L}^{(\alpha)}_n)$.
\begin{enumerate}
   \item[ $(i)$] \textsc{Weak renewal law.}   With $K_{\alpha}:=(\Gamma(2-\theta)
   \Gamma(1+\theta))^{-1}$ for  $\alpha$ expansive of exponent $\theta\in [0,1]$, and with $K_\alpha:=1$ for $\alpha$  of finite type, we have that
\[ \sum_{k=1}^{n} \lambda\left(\mathcal{L}^{(\alpha)}_k\right)
\sim K_{\alpha} \, \cdot n \cdot\left(\sum_{k=1}^{n}
 t_{k}\right)^{-1}.\]
\item[$(ii)$] \textsc{Strong renewal law.}
 With $k_{\alpha}:=(\Gamma(2-\theta)
   \Gamma(\theta))^{-1}$ for  $\alpha$  expansive of exponent $\theta\in (1/2,1]$, and with $k_\alpha:=1$ for $\alpha$  of finite type, we have
\[ \lambda\left(\mathcal{L}^{(\alpha)}_{n}\right)
\sim k_{\alpha} \cdot
\left(\sum_{k=1}^{n} t_{k}\right)^{-1}.
\]
\end{enumerate}
\end{enumerate}

\end{thm}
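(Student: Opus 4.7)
The plan is to identify the Lebesgue measures $u_n := \lambda(\mathcal{L}^{(\alpha)}_n)$ with the renewal sequence of the discrete probability distribution $(a_n)_{n\in\N}$ on $\N$, translate the asymptotics into an analytic statement about an explicit generating function, and then extract the claimed tail behaviour via Karamata-type Tauberian theorems. Since $L_\alpha$ is linear with slope $\pm 1/a_n$ on each atom $A_n$, every cylinder $C_\alpha(\ell_1,\dots,\ell_k)$ has Lebesgue measure $\prod_{i=1}^k a_{\ell_i}$, and decomposing $\mathcal{L}^{(\alpha)}_n = F_\alpha^{-(n-1)}(A_1)$ into these cylinders gives
\[
u_n = \sum_{k=1}^n \sum_{\ell_1+\cdots+\ell_k=n} a_{\ell_1}\cdots a_{\ell_k},
\]
so that $u_n = a_n + \sum_{k=1}^{n-1} a_k u_{n-k}$. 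A summation by parts using $a_n = t_n-t_{n+1}$ and $t_1=1$ turns this into the generating function identity
\[
U(s) := \sum_{n\geq 1} u_n s^n = \frac{A(s)}{1-A(s)} = \frac{s\,A(s)}{(1-s)\,T(s)},
\]
where $A(s):=\sum a_n s^n$ and $T(s):=\sum t_n s^n$. Part (1) is now immediate: $A(1^-)=1$ gives $U(1^-)=\infty$ and hence $\sum u_n$ diverges; the mean of $(a_n)$ equals $\sum n a_n=\sum_k t_k$ by Fubini, $(a_n)$ is aperiodic because $a_1>0$, and the classical Erd\H{o}s-Feller-Pollard renewal theorem therefore yields $u_n\to 1/\sum_k t_k$ in the finite type case and $u_n\to 0$ in the infinite type case.

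For the weak renewal law (2)(i) in the expansive regime, the hypothesis $t_n=\psi(n)n^{-\theta}$ combined with Karamata's Tauberian theorem gives $T(s)\sim \Gamma(1-\theta)(1-s)^{-(1-\theta)}\psi(1/(1-s))$ as $s\to 1^-$, so that
\[
U(s) \sim \frac{(1-s)^{-\theta}}{\Gamma(1-\theta)\,\psi(1/(1-s))}.
\]
A second application of Karamata to the positive sequence $(u_n)$ yields $\sum_{k\leq n} u_k \sim n^\theta/(\Gamma(1-\theta)\Gamma(1+\theta)\psi(n))$, and matching this with the Karamata-asymptotic $\sum_{k\leq n} t_k \sim \psi(n) n^{1-\theta}/(1-\theta)$ together with the identity $\Gamma(2-\theta)=(1-\theta)\Gamma(1-\theta)$ rearranges to the claimed form $K_\alpha\cdot n/\sum_{k\leq n} t_k$ with $K_\alpha=1/(\Gamma(2-\theta)\Gamma(1+\theta))$. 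The boundary case $\theta=1$ needs only the usual slowly varying refinement of Karamata, and the finite type case is immediate from part (1) by Ces\`aro averaging.

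The hard part is the strong renewal law (2)(ii). Karamata controls only partial sums of positive sequences, and in general there is no reason for $u_n$ itself to be monotone. The cut-off $\theta>1/2$ is precisely the classical threshold of the strong renewal theorem of Garsia-Lamperti and Erickson for random walks whose step distribution lies in the domain of attraction of a one-sided stable law of index $\theta$: above this threshold the renewal mass concentrates on the natural scale $n^{\theta-1}/\psi(n)$ and the local asymptotic $u_n \sim n^{\theta-1}/(\Gamma(1-\theta)\Gamma(\theta)\psi(n))$ follows, whereas below it fluctuations can dominate. I would therefore invoke this theorem directly, after checking aperiodicity and the regularly varying tail hypothesis for $(a_n)$, and then use $\Gamma(2-\theta)=(1-\theta)\Gamma(1-\theta)$ together with the asymptotic for $\sum_{k\leq n} t_k$ to rewrite the conclusion as $k_\alpha/\sum_{k\leq n} t_k$ with $k_\alpha=1/(\Gamma(2-\theta)\Gamma(\theta))$. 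The finite type case of (ii) reduces at once to part (1).
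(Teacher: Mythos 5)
Your proposal is correct and, in its essentials, follows the same route as the paper: both rest on the renewal identity $\lambda(\mathcal{L}^{(\alpha)}_{n})=\sum_{m=1}^{n}a_m\lambda(\mathcal{L}^{(\alpha)}_{n-m})$ (the paper conditions on the last digit of the cylinder, you on the first -- immaterial), both deduce part (1) from the Erd\H{o}s--Feller--Pollard theorem together with $\sum_{k\le n} k a_k \sim \sum_{k\le n} t_k$ and the generating-function identity $\ell(s)=1/(1-a(s))$ for the divergence claim, and both obtain the strong renewal law (2)(ii) by invoking Garsia--Lamperti and Erickson for a step distribution with regularly varying tail $t_n=\psi(n)n^{-\theta}$, $\theta>1/2$. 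The one place you genuinely diverge is (2)(i): the paper simply quotes Garsia--Lamperti's Lemma 2.3.1 for the Ces\`aro asymptotic $W_n\sim(\Gamma(2-\theta)\Gamma(1+\theta))^{-1}n(\sum_{k\le n}t_k)^{-1}$, whereas you rederive it from the explicit identity $1-A(s)=(1-s)T(s)/s$ and a double application of Karamata's Tauberian theorem. This buys a self-contained and transparent proof of the weak law (and makes visible exactly where the $\Gamma$-constants come from), at the cost of having to handle the boundary indices $\theta=0$ and $\theta=1$ with the de Haan-type refinements you allude to; the paper's citation covers all of $\theta\in[0,1]$ uniformly. Both treatments dispose of the finite-type cases of (2) by reduction to part (1), as you do.
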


\proc{Remark 1.}  Note
    that, by using a result of Garsia and Lamperti (\cite{GL}), we have for an expansive partition $\alpha$ of exponent
   $\theta \in (0,1)$, that
   \[ \liminf _{n\to \infty}  \left( n\cdot t_n \cdot \lambda
   \left(\mathcal{L}^{(\alpha)}_{n}\right)\right)=
   \frac{\sin \pi
   \theta}{\pi}.\]
   Moreover,  if
   $\theta \in (0,1/2)$, then the corresponding limit does not exist
      in general. However, in this situation the existence of
      the limit is  always guaranteed at least  on the complement
      of some set of integers of
    zero  density\footnote{The density of a set of integers $A$ is given, where the limit exists, by $d(A)=\lim_{n\to \infty} \#A(n)/n$, where $A(n):=\{1, \ldots, n\}\cap A$.}.
   \medbreak

    \begin{figure}[h!]\begin{center}
     \includegraphics[width=0.4\textwidth]{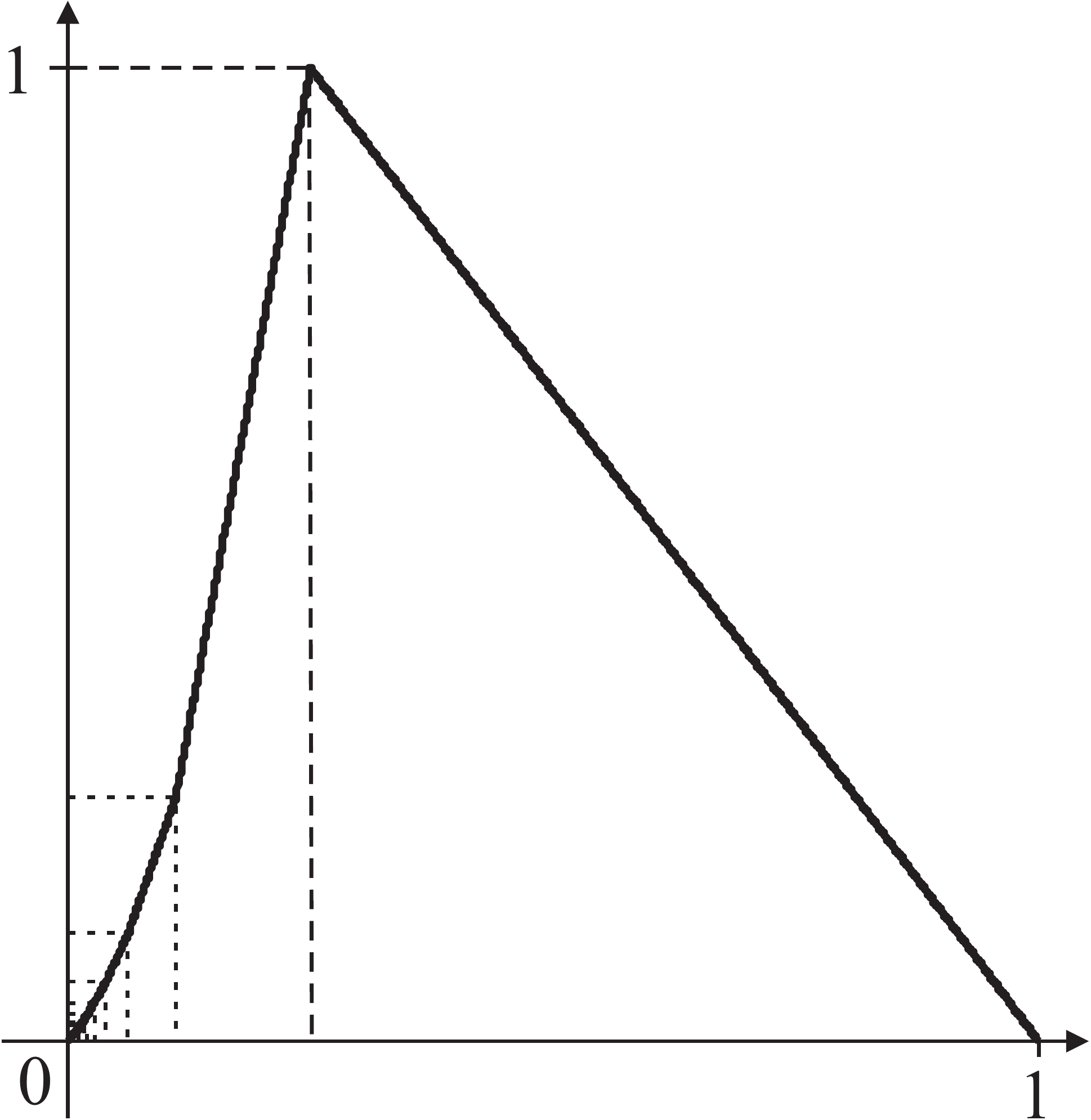}
    \hspace{0.1\textwidth}
    \includegraphics[width=0.4\textwidth]{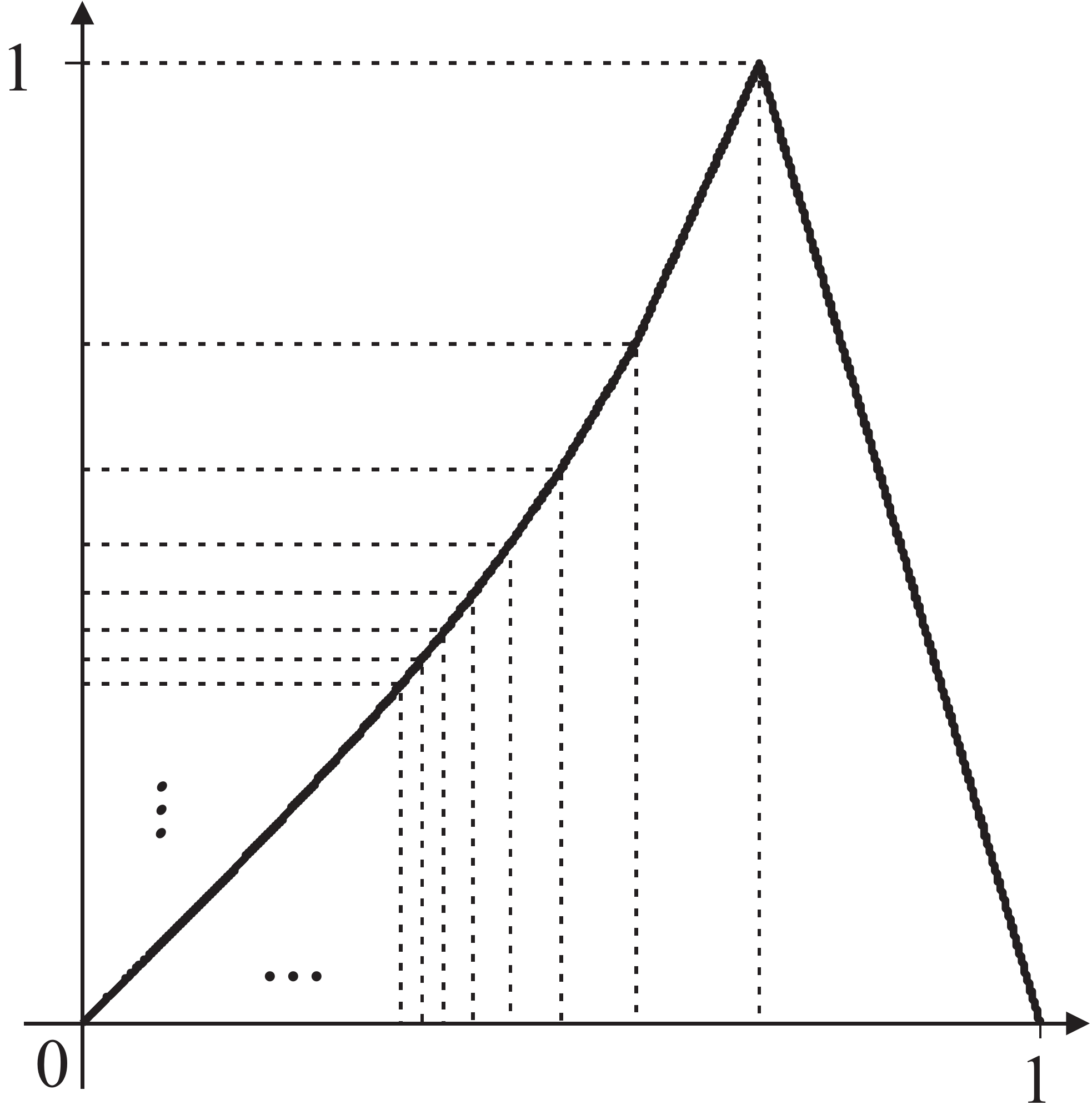}
    \caption{\label{fig:expansiveSystems} The graphs of two $\alpha$-Farey maps  with $\alpha$ expansive.
    The partition on the left is of finite type with  $t_n=1/n^{2}$,
    $n\in \N$ and the partition on the
    right is of infinite type with $t_n={1/\sqrt{n}}$, $n\in \N$.}\end{center}
    \end{figure}

In order to state our remaining main results, recall that
the Lyapunov exponent of a differentiable map $S:\mathcal{U}\to \mathcal{U}$ at a
point $x\in\mathcal{U}$ is defined, provided the limit exists, by
\[
\Lambda(S, x):=\lim_{n\to \infty}\frac1n
\sum_{k=0}^{n-1}\log|S'(S^k(x))|.
\]
Our second main theorem gives a complete fractal-geometric description of the
Lyapunov spectra associated with the map
$L_{\alpha}$. That is, we consider the spectral sets
$\{s\in \R :\{x\in\mathcal{U}:\Lambda(L_{\alpha}, x)=s\} \not=\emptyset\}$
associated with the Hausdorff dimension function  $\tau_\alpha$, which is given by
\[
\tau_\alpha(s):=\dim_H(\{x\in\mathcal{U}:\Lambda(L_{\alpha}, x)=s\}).
\]
 In the following $p: \R \to \R \cup \{\infty\}$  denotes the {\em $\alpha$-L\"uroth pressure function},  given by
$p(u):= \log
\sum_{n=1}^{\infty}a_{n}^{u}$.
 We say that {\em $L_\alpha$ exhibits no
phase transition} if and
only if the pressure function $p$ is differentiable everywhere (that is,
the right and left derivatives of $p$ coincide everywhere, with the
convention that $p'(u)=\infty$ if
$p(u)=\infty$). We refer to  \cite{Sarig} for an interesting further discussion of the phenomenon of phase transition in the context of  countable state Markov chains.

\begin{thm}[Lyapunov spectrum for $\alpha$-L\"uroth systems]\label{multi1}
For a given partition $\alpha$,  the Hausdorff dimension function
of the Lyapunov spectrum associated with $L_{\alpha}$
is given as follows. For $t_{-}:=\min\{-\log a_{n}:n\in\N\}$
we have that $\tau_{\alpha}$ vanishes on $(-\infty,t_{-})$, and
for each $s\in(t_{-},\infty)$ we have \[
\tau_{\alpha}(s)=\inf_{u\in\R}\left(u+s^{-1} p(u) \right).\]
 Moreover, $\tau_{\alpha}(s)$ tends to $t_{\infty}:=\inf\{r>0:\sum_{k=1}^{\infty}a_{n}^{r}<\infty\}\leq1$
for $s$ tending to infinity. Note that $t_{\infty}$ is also equal
to the Hausdorff dimension of the Good-type
set $G_\infty^{(\alpha)}$ associated to $L_{\alpha}$, given by
\[
G_\infty^{(\alpha)}:=\{[\ell_1, \ell_2, \ldots]_\alpha:\lim_{n\to\infty}\ell_n=\infty\}.
\]
Concerning the possibility of phase transitions for $L_{\alpha}$, the following
hold:
\begin{itemize}
\item If $\alpha$ is expanding, then $L_{\alpha}$ exhibits no phase
transition and $t_{\infty} = 0$.
\item If $\alpha$ is expansive of exponent
$\theta>0$ and eventually decreasing,
then $L_{\alpha}$ exhibits no phase
transition if and only if $\sum_{n=1}^{\infty}
\psi(n)^{1/(1+\theta)} (\log n)/n$ diverges. Moreover,
in this situation we have  that $t_{\infty}=1/(1+\theta)$.
\item If $\alpha$ is expansive of exponent
$\theta=0$, then $L_{\alpha}$ exhibits no phase
transition if and only if $\sum_{n=1}^\infty a_n\log(a_n)$ diverges.
Moreover, in this situation we have that $t_\infty=1$.
\end{itemize}
\end{thm}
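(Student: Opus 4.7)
The plan is to exploit the piecewise affine structure of $L_\alpha$: on each atom $A_n$ one has $|L_\alpha'|\equiv 1/a_n$, so each cylinder $C_\alpha(\ell_1,\dots,\ell_n)$ has diameter exactly $a_{\ell_1}\cdots a_{\ell_n}$. Writing $\ell_k(x)$ for the index of the atom containing $L_\alpha^{k-1}(x)$, the Lyapunov exponent is the Birkhoff average of the locally constant potential $\phi_\alpha(x):=-\log a_{\ell_1(x)}$, that is
\[
\Lambda(L_\alpha,x)=\lim_{n\to\infty}\frac1n\sum_{k=1}^{n}\bigl(-\log a_{\ell_k(x)}\bigr).
\]
Since each term is at least $t_-$, we get $\Lambda(L_\alpha,x)\geq t_-$ whenever the limit exists, so $\tau_\alpha$ vanishes on $(-\infty,t_-)$.

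For $s\in(t_-,\infty)$ I would apply the standard multifractal formalism, proving matching Legendre-type upper and lower bounds. For the \emph{upper bound}, fix $u\in\R$ with $p(u)<\infty$ and cover $\{\Lambda(L_\alpha,\cdot)=s\}$ by the $n$-cylinders meeting it; for such cylinders $a_{\ell_1}\cdots a_{\ell_n}\approx \e^{-sn}$, and the Bowen-type identity $\sum_{(\ell_1,\ldots,\ell_n)}(a_{\ell_1}\cdots a_{\ell_n})^u=\e^{np(u)}$ yields that the $\delta$-Hausdorff premeasure stays bounded when $\delta=u+p(u)/s$, so that $\tau_\alpha(s)\leq \inf_u(u+p(u)/s)$. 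For the \emph{lower bound}, for each $u$ in the interior of $\dom(p)$ consider the Bernoulli measure $\mu_u$ on $\N^\N$ with weights $q_n:=a_n^u\e^{-p(u)}$; a direct computation gives
\[
\Lambda_{\mu_u}:=-\sum_n q_n\log a_n,\qquad h(\mu_u)=u\Lambda_{\mu_u}+p(u),
\]
so by Shannon--McMillan--Breiman and the volume lemma, $\mu_u$-a.e.\ point lies in $\{\Lambda(L_\alpha,\cdot)=\Lambda_{\mu_u}\}$ and has pointwise dimension $u+p(u)/\Lambda_{\mu_u}$; the mass distribution principle then gives $\tau_\alpha(\Lambda_{\mu_u})\geq u+p(u)/\Lambda_{\mu_u}$. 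As $u$ sweeps the interior of $\dom(p)$, $\Lambda_{\mu_u}=-p'(u)$ parametrises $(t_-,\infty)$ monotonically, matching the two bounds.

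To identify $\lim_{s\to\infty}\tau_\alpha(s)$, note that for any fixed $u$ with $p(u)<\infty$ one has $u+p(u)/s\to u$ as $s\to\infty$, forcing $\tau_\alpha(s)\to \inf\{u:p(u)<\infty\}=t_\infty$. For the identification $t_\infty=\dim_H G_\infty^{(\alpha)}$, I would adapt the Mauldin--Urba\'nski framework to the conformal IFS on the truncated alphabet $\{N,N+1,\ldots\}$: its limit set sits inside $G_\infty^{(\alpha)}$ with Hausdorff dimension $\inf\{r:\sum_{k\geq N}a_k^r<\infty\}=t_\infty$, giving the lower bound; countable stability combined with the cover of $G_\infty^{(\alpha)}$ by the $L_\alpha^{-m}$-preimages of these tail limit sets gives the matching upper bound.

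The trichotomy reduces to determining when $p'(t_\infty^+)=+\infty$, since $p\equiv\infty$ on $(-\infty,t_\infty)$ forces $p'\equiv\infty$ there. Writing
\[
p'(u)=-\frac{\sum_n a_n^u\log a_n}{\sum_n a_n^u},
\]
the expanding case gives geometric decay of $a_n$ via Lemma \ref{eq:asympExpansive}, hence $t_\infty=0$ and the numerator dominates the denominator as $u\downarrow 0$, yielding no phase transition. In the expansive case with $\theta>0$ and eventually decreasing partition, a Karamata-type inversion of $t_n=\psi(n)n^{-\theta}$ gives $a_n\sim \theta\,\psi(n)n^{-(1+\theta)}$, so $t_\infty=1/(1+\theta)$; Karamata's theorem of regular variation applied to both sums in $p'(t_\infty^+)$ then converts the condition $p'(t_\infty^+)=\infty$ precisely into the divergence of $\sum_n\psi(n)^{1/(1+\theta)}(\log n)/n$. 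The case $\theta=0$ is analogous, giving $t_\infty=1$ and the equivalent condition $\sum_n a_n\log a_n=-\infty$. The main technical obstacle will be these Karamata asymptotics, because slowly varying factors must be tracked simultaneously through $a_n$, through $\sum_n a_n^r$ near $r=t_\infty$, and through the logarithmic moment $\sum_n a_n^r\log a_n$, all while distinguishing whether $p(t_\infty)$ itself is finite or infinite.
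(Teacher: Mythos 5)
Your route is genuinely different from the paper's: the paper obtains the entire dimension formula by feeding the weights $z_n=-1$ into the general multifractal result of Jaerisch and Kesseb\"ohmer, whereas you propose to run the covering/Gibbs-measure argument by hand. The upper bound and the Bernoulli lower bound are fine where they apply, but there is a genuine gap in your lower bound. The measures $\mu_u$ with $u$ in the interior of $\dom(p)$ only realise the Lyapunov values $s=-p'(u)$, and these exhaust $(t_-,\infty)$ \emph{only} when $\lim_{u\searrow t_\infty}(-p'(u))=+\infty$, i.e.\ only when there is no phase transition. Precisely in the case the theorem also covers --- $p(t_\infty)<\infty$ and $t_0:=\lim_{u\searrow t_\infty}(-p'(u))<\infty$ --- your family of Gibbs states reaches only $s\in(t_-,t_0)$, while for $s\ge t_0$ the infimum $\inf_u\left(u+s^{-1}p(u)\right)$ is attained at the boundary point $u=t_\infty$ of $\dom(p)$ and equals $t_\infty+s^{-1}p(t_\infty)$ (the linear part of the spectrum described in Remark 2). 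No single $\mu_u$ has Lyapunov exponent $s$ there, so ``matching the two bounds'' fails; you need either a non-Gibbs Moran-type construction or an exhaustion argument, which is exactly what the cited machinery supplies (compare the explicit appeal to the Exhaustion Principle II of \cite{JaerischKess09} in the proof of Theorem \ref{multi2} for the analogous difficulty).

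A secondary problem concerns $\dim_H G_\infty^{(\alpha)}=t_\infty$: the limit set of the IFS on the truncated alphabet $\{N,N+1,\ldots\}$ is \emph{not} contained in $G_\infty^{(\alpha)}$ (its points need not satisfy $\ell_n\to\infty$), and its dimension is $\inf\{r:\sum_{k\ge N}a_k^r\le1\}$, which in general strictly exceeds $t_\infty$ and only decreases to it as $N\to\infty$; so the truncated alphabets give the \emph{upper} bound (via the countable cover by preimages), while the lower bound needs a diagonal construction with digit thresholds $N_n\to\infty$ growing slowly (the paper simply cites \cite{SM} in the expansive case). The remaining parts --- vanishing below $t_-$, the limit $t_\infty$ as $s\to\infty$, and the phase-transition trichotomy via $p'(u)=\sum_n a_n^u\log a_n/\sum_n a_n^u$ together with $a_n\sim\theta\psi(n)n^{-(1+\theta)}$ --- follow the same lines as the paper and are sound, though in the sub-case where both $\sum_n a_n^{t_\infty}$ and $-\sum_n a_n^{t_\infty}\log a_n$ diverge you must still argue, as the paper does, that the weighted average $-p'(u)$ genuinely tends to infinity.
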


Note that the Lyapunov spectra for the Gauss map and the Farey map  have been determined
 in \cite{KesseboehmerStratmann:07}.  Also, the sets $G_\infty^{(\alpha)}$ are named for I.J. Good \cite{Good}, for his results concerning similar sets 
 in the continued fraction setting.
  
In our final main theorem we consider the Lyapunov spectra arising from the  maps $F_{\alpha}$. In other words,
 we consider the spectral sets $\{s\in \R: \{x\in\mathcal{U}:\Lambda(F_{\alpha},
x)=s\}\not=\emptyset\}$ associated with the Hausdorff dimension-function  $\sigma_\alpha(s)$, given by
\[
\sigma_\alpha(s):=\dim_H(\{x\in\mathcal{U}:\Lambda(F_{\alpha},
x)=s\}).\]
We define the \emph{$\alpha$-Farey free
energy function}  $v:\R\to\R$, to be given by
\[
v(u):=\inf\left\{
r\in\R:\sum_{n=1}^{\infty}a_{n}^{u}\exp\left(-rn\right)\leq1\right\} .\]
Note that we will say that {\em $F_\alpha$ exhibits no  phase transition} if and
only if the $\alpha$-Farey free energy   function $v$ is differentiable
everywhere, that is,
the right and left derivatives of $v$ coincide everywhere.
\begin{thm}[Lyapunov spectrum for $\alpha$-Farey systems]\label{multi2}
Let $\alpha$ be a  partition that is
either expanding, or expansive and eventually decreasing.
The Hausdorff dimension function of the Lyapunov spectrum associated with
$F_{\alpha}$
is then given as follows.
For $s_-:=\inf \{-(\log a_{n})/n: n \in \N\}$ and
$s_+:=\sup\{-(\log a_{n})/n: n \in \N\}$, we have
that $\sigma_{\alpha}\left(s\right)$ vanishes outside
the interval $[s_-, s_+]$ and for each $s\in (s_-, s_+)$, we have
\[
\sigma_{\alpha}\left(s\right)=
\inf_{u\in\R}\left(u+ s^{-1} v\left(u\right)\right).\]
Concerning the possibility of phase transitions for $F_{\alpha}$, the following
hold:
\begin{itemize}
    \item If
$\alpha$ is expanding, then $F_{\alpha}$ exhibits no phase
transition.  In particular, $v$ is strictly decreasing and  bijective.
\item  If $\alpha$ is expansive of exponent
$\theta$ and eventually decreasing,
then $F_{\alpha}$ exhibits no phase
transition if and only if $\alpha$  is of infinite type.
In particular, $v$ is non-negative and vanishes on $[1,\infty)$.
\end{itemize}
\end{thm}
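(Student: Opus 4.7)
The plan is to exploit the fact that $F_\alpha$ acts as a Kakutani tower over $L_\alpha$ with roof function equal to the first $\alpha$-L\"uroth digit, and then to apply thermodynamic formalism on the resulting full shift. The starting observation is that $|F_\alpha'|=a_{n-1}/a_n$ on $A_n$ for $n\geq 2$ and $|F_\alpha'|=1/a_1$ on $A_1$, so for $x=[\ell_1,\ell_2,\ldots]_\alpha$ and $N_k:=\sum_{i=1}^k \ell_i$ the Birkhoff sum telescopes:
\[
\sum_{j=0}^{N_k-1}\log\bigl|F_\alpha'\bigl(F_\alpha^j x\bigr)\bigr|=-\sum_{i=1}^k \log a_{\ell_i}.
\]
Together with $N_{k+1}-N_k=\ell_{k+1}$ and a routine interpolation between full blocks, this identifies $\Lambda(F_\alpha,x)$, when it exists, with $\lim_k(-\sum_{i\leq k}\log a_{\ell_i})/(\sum_{i\leq k}\ell_i)$. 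Periodic points with $\ell_i\equiv n$ realise every value $-\log a_n/n$, while the identity above confines $\Lambda(F_\alpha,x)$ to $[s_-,s_+]$, so $\sigma_\alpha$ vanishes outside that interval.

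For the variational formula I work on the Bernoulli shift $\N^\N$ associated to $L_\alpha$. For each $u$ at which $\sum_n a_n^u e^{-rn}=1$ is solvable I set $r=v(u)$; otherwise (the expansive regime with $u\geq 1$, see below) I set $v(u)=0$. Define $\mu_u$ on $\N^\N$ by $\mu_u([n])=a_n^u\, e^{-n v(u)}$ and push it forward to $\U$ via the $\alpha$-L\"uroth coding. The law of large numbers yields, $\mu_u$-a.e.,
\[
\Lambda(F_\alpha,\cdot)=s(u):=\frac{-\sum_n a_n^u e^{-n v(u)}\log a_n}{\sum_n n\, a_n^u e^{-n v(u)}},
\]
while a Shannon--McMillan / volume-lemma computation on the cylinder partition of $F_\alpha$ gives local dimension $u+v(u)/s(u)$ at $\mu_u$-typical points, furnishing the lower bound $\sigma_\alpha(s(u))\geq u+v(u)/s(u)$. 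The matching upper bound follows by covering a level set with $F_\alpha$-cylinders grouped by word length and Birkhoff sum, combined with the convex-duality identity $s(u)=-v'(u)$ obtained by implicit differentiation. Infimising over $u$ then yields $\sigma_\alpha(s)=\inf_u(u+s^{-1}v(u))$.

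For the phase transitions I use convexity of $v$, so non-differentiability can occur only where one-sided derivatives disagree. In the expanding case, exponential decay of $(a_n)$ makes $\sum a_n^u e^{-rn}$ real-analytic in $(u,r)$ wherever finite, and the implicit function theorem applied to $\sum_n a_n^u e^{-rn}=1$ supplies an analytic branch $v$ with $v'(u)=-s(u)<0$; bijectivity then follows from $s(u)$ sweeping out $(s_-,s_+)$. In the expansive case, polynomial decay of $(a_n)$ forces $\sum a_n^u e^{-rn}=\infty$ for every $r<0$ and $u\geq 1$, while $\sum a_n^u\leq\sum a_n=1$; hence $v\equiv 0$ on $[1,\infty)$ and $v\geq 0$ throughout. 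Implicit differentiation at $(u,r)=(1,0)$ gives $v'(1^-)=\bigl(\sum a_n\log a_n\bigr)/\bigl(\sum n a_n\bigr)$, which vanishes precisely when $\sum n a_n=\sum_k t_k=\infty$, i.e., when $\alpha$ is of infinite type; since $v'(1^+)=0$ always, this is the no-phase-transition criterion. The main obstacle will be the upper-bound half of the variational formula near the endpoints of $[s_-,s_+]$ and at the phase-transition point, where the Gibbs measures $\mu_u$ degenerate and one must argue directly with cylinder covers grouped by word length, controlling overlap multiplicities via eventual monotonicity of $(a_n)$; a secondary technical point is to justify $v\equiv 0$ on $[1,\infty)$ uniformly across admissible slowly varying $\psi$ via Karamata-type summation asymptotics analogous to those used for Theorem \ref{renewal}.
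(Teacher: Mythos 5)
Your overall architecture --- inducing on $A_1$, working on the full shift $\N^{\N}$, and running a Gibbs-measure multifractal analysis with the free energy determined by $\sum_n a_n^u\e^{-v(u)n}=1$ --- is sound, and it is essentially what sits inside the black box the paper invokes: the paper instead cites the general theorem of Jaerisch and Kesseb\"ohmer with the potential $z_n=-n$ and transfers the result to $F_\alpha$ via Proposition \ref{eq:prop}. Your phase-transition analysis (the function $Z(u,r)=\sum_n a_n^u\e^{-rn}$, analyticity in the expanding case, $v\equiv 0$ on $[1,\infty)$ in the expansive case, and the criterion $\lim_{u\nearrow 1}v'(u)=\sum_n a_n\log a_n/\sum_n na_n$, which is zero exactly when $\sum_k t_k=\infty$) matches the paper's almost verbatim.

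Two places need more than you give them. First, the ``routine interpolation between full blocks'' is precisely the content of the paper's Proposition \ref{eq:prop}, and it is not routine: the return time to $A_1$ is unbounded, so the partial block between $N_{k}$ and $n$ can a priori be comparable in length to $N_{k}$ itself. The paper needs Lemma \ref{eq:asympExpansive} (3) --- uniform control of $\frac{1}{m}\sum_{j<m}\log|F_{\alpha}'(F_{\alpha}^{j}(x))|$ for orbits starting deep in the partition, which is exactly where the hypothesis ``expanding, or expansive and eventually decreasing'' enters --- together with a contradiction argument showing that if the block limit exists and differs from $\log\rho$ then the partial-block length is $o(N_k)$. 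Without this, your identification of $\Lambda(F_\alpha,x)$ with the block limit only goes one way, and your upper-bound covering of the $F_\alpha$-level sets inherits the same difficulty. Second, and more seriously: in the finite-type expansive case the theorem asserts $\sigma_\alpha(s)=\inf_u\left(u+s^{-1}v(u)\right)=1$ for all $s$ in the nondegenerate interval $(0,\,-\sum_n a_n\log a_n/\sum_n na_n]$, but your Gibbs measures $\mu_u$ realize only the slopes $-v'(u)$ with $u<1$, i.e.\ only $s$ strictly above that interval; so your lower-bound construction cannot reach it. You flag this as ``the main obstacle'' but offer no mechanism; the paper closes it by invoking the Exhaustion Principle II of \cite{JaerischKess09}, and some such additional input (an approximation by subsystems rather than a single Gibbs state) is genuinely required. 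A smaller issue: implicit differentiation ``at $(u,r)=(1,0)$'' is at a boundary point of the domain of $Z$ in the expansive case, so the claimed value of $v'(1^-)$ needs an approximation argument such as the paper's truncated free energies $v_N$ rather than a direct appeal to the implicit function theorem.
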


The structure of the paper is as follows. In Section \ref{prelimaries}, we will
collect various basic properties of the $\alpha$-Farey map
and the $\alpha$-L\"{u}roth map. In particular, this will include a
discussion of the topological dynamics of these two maps and the way in which
they give rise to a
family of distribution functions which are all in the spirit of the Minkowski
question mark function (see \cite{min},\cite{Salem} and \cite{mink?}).
Then, we will locate the invariant densities associated with the
$\alpha$-Farey system and the $\alpha$-L\"{u}roth system
and also establish exactness for both of these maps.

In Section \ref{sec:renewal}, we  study the sequence of Lebesgue measures
of the $\alpha$-sum-level sets, defined above. We first show that this
sequence satisfies a renewal-type equation. We then employ the
discrete
Renewal Theorem by Erd{\H o}s, Pollard and Feller (\cite{EPF}), as well
some renewal results by Garsia, Lamperti (\cite{GL}) and
Erickson
(\cite{Erickson}), and show how these give rise to the proof of
 Theorem \ref{renewal}.

In Section \ref{sec:multi}, we will give a complete description of the multifractal
spectra arising from the $\alpha$-Farey map and the
$\alpha$-L\"{u}roth map. For this we use a general method obtained in  \cite{JaerischKess09}.
 Furthermore, we give a detailed discussion of the phenomenon of phase transition.
 These are the main steps in the proofs of Theorem \ref{multi1} and Theorem \ref{multi2}.

In the Appendix, we will first consider the map $F_{\alpha_H}$,
arising from the {harmonic partition}
$\alpha_H$. As already mentioned above, the associated map
$L_{\alpha_H}$ coincides
with the alternating
L\"{u}roth map.
 We end the paper by giving various further examples which demonstrate the diversity
 of different behaviours of the
spectra given by Theorems \ref{multi1} and \ref{multi2} in dependence
on the chosen partition $\alpha$.

\proc{Remark.} Let us briefly comment also on  the behaviour of the Lyapunov spectra at their boundary points.
Note that in all the examples  given at the end of the paper (see Figures \ref{fig:PressureSpecCL},
\ref{fig:PressureSpectrum3}, \ref{fig:PressureSpectrumPhaseTransition},
\ref{fig:PressureSpectrumNoPhaseTransition}, \ref{fig:PressureSpectrum4_5} and
 \ref{fig:PressureSpectrumTent3_2}) we have that $\sigma_{\alpha}(s_{+})= \tau_{\alpha}(t_{-})=0$.
However, in general  this is not necessarily  true. For instance, one immediately verifies that for a partition $\alpha$ for which $a_{1}=a_{2}$, one
has that $\tau_{\alpha}(t_{-}) \geq (\log 2) /(- \log a_{1}) >0$. Likewise, if  $\alpha$ is given such that
$a_{1}=\sqrt{a_{2}}$,  then $\sigma_{\alpha}(s_{+}) \geq (\log((1+\sqrt{5})/2))/(- \log a_{1})>0$.
Also note that, if $\alpha$ is a partition  which  is expanding and eventually decreasing, then we always have that  $s_{-}>0$, whereas
 $\sigma_{\alpha} (s_{-})$ can be either equal to zero or strictly positive. Furthermore,
 for an expansive partition $\alpha$ we always have that  $s_{-}=0$ and  $\sigma_{\alpha}(0)=1 $.
In order to see that $\sigma_{\alpha}(0)=1 $ is in fact true for any partition  $\alpha$, one  argues as follows.
 On the one hand,  if $\alpha$ is of infinite type,  then this follows from the fact that  $\Lambda(F_{\alpha},x) =0$,
  for $\lambda$-almost all $x \in \mathcal{U}$. On the other hand, if $\alpha$ is of finite type, then the proof follows
  along the lines of the proof of  \cite[Proposition 10]{GR}.
\medbreak
\proc{Remark 2.}
Note that  for the Farey map and its jump transformation,  the Gauss map,
the analogue of Theorem \ref{renewal} has been obtained by the first and the third author in
 \cite{KesseboehmerStratmann:09}. There the results were derived by
using advanced infinite ergodic theory, rather than the strong
renewal  theorems  employed in this paper. This underlines the fact that one
of the main ingredients of infinite ergodic theory is provided by  some delicate estimates in renewal theory. Likewise, as already mentioned above,
the Lyapunov spectra for the Farey map and the Gauss map have been investigated  in detail in  \cite{KesseboehmerStratmann:07}.
The results there are parallel to the outcomes of Theorem \ref{multi1} and \ref{multi2}. Clearly, the Farey map and
the Gauss map are non-linear, whereas the systems in this paper are always
piecewise linear. However,  since our analysis is based on a large family of different partitions of $\mathcal{U}$,
 the class of maps which we consider in this paper
 allows to detect a variety of interesting new
phenomena. For instance, as shown in  \cite{KesseboehmerStratmann:07},  the  spectral sets of  the Farey map
and the Gauss map intersect at the single point $2 \log ((\sqrt{5}+1)/2)$. The same type of behaviour can also be found
in our piecewise linear setting, as shown in  Fig. \ref{fig:PressureSpectrum4_5}
for $a_n:=\zeta\left(5/4\right)^{-1}n^{-5/4}$,
where $\zeta$ denotes the  Riemann zeta-function.  However, this situation is by no means canonical, as the  harmonic
partition  $\alpha_H$ already shows, where the intersection of the
two spectral sets is equal to the interval $[\log2, (\log 6)/2]$
(cf. Fig. \ref{fig:PressureSpecCL}). The situation can be even more  dramatic, as shown in
Fig. \ref{fig:PressureSpectrumTent3_2} for the partition   $\alpha$
determined by $a_{n}:= 2 \cdot 3^{-n}$. For this partition, the spectral set  associated with the $\alpha$-Farey map is fully contained in the spectral set  of
the $\alpha$-L\"uroth map.
A similar picture arises when one considers the possibility of the existence of  phase transitions.
The results of  \cite{KesseboehmerStratmann:07} clearly show that neither the Gauss map nor the Farey map exhibit the type of  phase transition
established in this paper. In contrast to this, Theorem \ref{multi1}
and
\ref{multi2}  show that in the piecewise linear scenario
the situation is much more interesting, as the  examples in Fig. \ref{fig:PressureSpectrum3},
\ref{fig:PressureSpectrumPhaseTransition} and
\ref{fig:PressureSpectrumNoPhaseTransition} clearly demonstrate.
More specifically, if $\lim_{r\searrow t_{\infty}}\sum_{n=1}^{\infty}a_{n}^{r}\log a_{n}/\sum_{n=1}^{\infty}a_{n}^{r}=\infty$
then the dimension function $\tau_{\alpha}$ is real-analytic on $\left(t_{-},\infty\right)$.
An example for this is provided by the alternating L\"uroth system, where $a_{n}=\left(n\left(n+1\right)\right)^{-1}$,
and hence,  $t_{\infty}=1/2$ and  $\sum_{n=1}^{\infty}a_{n}^{t_\infty}=\infty$ (cf. Fig. \ref{fig:PressureSpecCL},
see also Fig. \ref{fig:PressureSpectrum3}, \ref{fig:PressureSpectrum4_5}, and \ref{fig:PressureSpectrumTent3_2} for further examples).
Note that the example considered in Fig. \ref{fig:PressureSpectrumNoPhaseTransition}
is particularly interesting, since it shows that it is possible that there is no phase transition, although $p(t_\infty)$ is finite.
That is, for  $a_{n}:=(n\left(\log
n\right)^{2})^{-2}/\sum_{k=1}^{\infty} (k\left(\log k\right)^{2})^{-2} $,
we have on the one hand
$\sum_{n=1}^{\infty}a_{n}^{t_{\infty}}<\infty$ with  $t_{\infty}=1/2$,
but on the other hand we have
$\lim_{t\searrow t_{\infty}}\sum_{n=1}^{\infty}a_{n}^{t}\log a_{n}/\sum_{n=1}^{\infty}a_{n}^{t}=\infty$.
However, for a partition $\alpha$ for which  $t_{0}:=\lim_{t\searrow t_{\infty}}\sum_{n=1}^{\infty}a_{n}^{t}\log a_{n}/
\sum_{n=1}^{\infty}a_{n}^{t}<\infty$, the $\alpha$-L\"uroth map
$L_{\alpha}$
 exhibits a phase transition of the first kind  at
 $t_{\infty}$. In this case the Hausdorff dimension function
 $\tau_{\alpha}$  is real-analytic on $\left(t_{-},t_{0}\right)$,
 whereas for $t\in[t_{0},+\infty)$ it is explicitly given
by \[\tau_\alpha (t)=\frac{\sum_{n=1}^\infty a_n^{t_\infty}}{t}+t_\infty.\]
An example demonstrating the latter situation is given  in Fig.  \ref{fig:PressureSpectrumPhaseTransition}.
\medbreak

\section{Preliminary discussion of $F_{\alpha}$ and $L_{\alpha}$}\label{prelimaries}
Throughout this section we let $\alpha$ denote some
arbitrary partition  of $\mathcal{U}$ of the type specified at the beginning
of the introduction.
\subsection{Topological properties of $F_{\alpha}$ and $L_{\alpha}$}
Recall from the introduction that
the $\alpha$-L\"uroth map $L_\alpha$  is given by
\[
L_{\alpha}(x):=
\left\{
	\begin{array}{ll}
	    ({t_n-x})/a_n, & \text{ if }x\in A_n,\ n\in\N,\\
	  0, & \hbox{ if } x=0.
	\end{array}
      \right.
\]
In the same way as the Gauss map gives rise to the continued fraction
expansion, the map $L_{\alpha}$ gives rise to a series expansion of
numbers in
the unit interval, which we refer to as the
\textit{$\alpha$-L\"{u}roth expansion}.
More precisely,
let $x\in\mathcal{U}\setminus \{0\}$ be given and let the finite or infinite sequence
$(\ell_k)_{k\geq1}$ of positive integers be determined by $L_{\alpha}^{k-1}(x) \in
A_{\ell_{k}}$, where the sequence terminates in $k$ if and only if  $L_{\alpha}^{k-1}(x)=t_n$, for some $n\geq2$.
Then the $\alpha$-L\"{u}roth expansion of  $x$ is
given as follows, where the sum is supposed to be finite if the
sequence is finite.
\[
x=
\sum_{n=1}^\infty(-1)^{n-1}\left(\textstyle\prod\limits_{i<n}a_{\ell_i}\right)
t_{\ell_n}=t_{\ell_1}-a_{\ell_1}t_{\ell_2}+a_{\ell_1}a_{\ell_2}t_{\ell_3}+\cdots
\]
 In this situation we then write
$x=:[ \ell_1, \ell_2,
\ell_3, \ldots]_{\alpha}$.
It is easy to see
that every infinite expansion is unique, whereas each
$x\in(0,1)$ with a finite $\alpha$-L\"uroth expansion can be expanded
in exactly two ways. Namely, one immediately verifies that $x=[\ell_1, \ldots, \ell_k,
1]_\alpha=[\ell_1,  \ldots, \ell_{k-1}, (\ell_k +1)]_\alpha$. Note that the map $L_\alpha$ only provides the latter expression. By
analogy with continued fractions, for which a number is rational if
and only if it has a finite continued fraction expansion, we say that
$x\in\U$ is an \textit{$\alpha$-rational number} when $x$ has a
finite $\alpha$-L\"uroth expansion and say that $x$ is an \textit{$\alpha$-irrational
number} otherwise. Of course, the set of $\alpha$-rationals is a
countable set.

If we truncate the
$\alpha$-L\"{u}roth expansion of $x$ after $k$ entries we obtain the
{\em $k$-th convergent of $x$}, denoted  $r_k^{(\alpha)}(x)$, which is given by
\[
r_k^{(\alpha)}(x):=[ \ell_1, \ldots,
\ell_k]_{\alpha}=t_{\ell_1}-a_{\ell_1}t_{\ell_2}+\cdots
+(-1)^{k-1}\left(\prod_{i=1}^{k-1}a_{\ell_i} \right)t_{\ell_k}.
\]
Note that if $x=[ \ell_1, \ell_2, \ell_3,
\ldots]_{\alpha}$, then
$L_\alpha(x)=[ \ell_2, \ell_3, \ell_4, \ldots]_{\alpha}$. This shows
that,  topologically, $L_{\alpha}$ corresponds to the shift map on the
space $\N^{\N}$, at least for those points with an infinite $\alpha$-L\"{u}roth expansion.
The cylinder sets associated with  the $\alpha$-L\"{u}roth expansion
are denoted by
\[
C_\alpha (\ell_1, \ldots, \ell_k):=\{[ x_1, x_2, \ldots
]_\alpha:x_i=\ell_i\text{ for } i=1, \ldots,k\}.
\]
We remark here that these cylinder sets are closed intervals with endpoints
given by $[\ell_1, \ldots, \ell_k]_\alpha$ and
$[\ell_1,
\ldots,\ell_{k-1}, (\ell_k +1)]_\alpha$. Consequently, we have  for the Lebesgue
measure  of $C_\alpha
(\ell_1, \ldots,
\ell_k)$ that
\[\lambda(C_\alpha (\ell_1, \ldots, \ell_k))=
\prod_{i=1}^{k}a_{\ell_i}.\]

For the first lemma of this section,  recall that the {\em jump transformation} $F_\alpha^*:\U\to\U$ of
$F_\alpha$ is given by $F_\alpha^*(x)= F_\alpha^{\rho_\alpha(x)}(x)$, where $\rho_\alpha: \U\to \N$
is given by $\rho_\alpha(x):=\inf\{ n\geq 0:F_\alpha^n(x)\in A_1\}+1$.
Note that one can immediately verify that $\rho_\alpha(x)$ is finite for all $x\in\U\setminus \{0\}$.
\begin{lem}\label{lem:2.1}
The jump transformation $F_\alpha^*$ of the $\alpha$-Farey map  $F_\alpha$ coincides with the
$\alpha$-L\"{u}roth map $L_\alpha$.
\end{lem}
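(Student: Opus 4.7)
The plan is to verify the equality $F_\alpha^*=L_\alpha$ by a direct orbit computation, using the key observation that each atom $A_n$ is pushed onto $A_{n-1}$ by a single application of $F_\alpha$ (for $n\geq 2$), so that a point in $A_n$ needs exactly $n$ iterates to first reach $A_1$ and then exit to a generic point of $\mathcal{U}$.

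First I would verify, using only the formula defining $F_\alpha$ and the identities $a_n=t_n-t_{n+1}$ and $1-t_2=a_1$, that $F_\alpha$ maps $A_n=(t_{n+1},t_n]$ affinely and orientation-preservingly onto $A_{n-1}=(t_n,t_{n-1}]$ for every $n\geq 2$, sending $t_{n+1}\mapsto t_n$ and $t_n\mapsto t_{n-1}$, while $F_\alpha$ maps $A_1=(t_2,1]$ affinely and orientation-reversingly onto $\mathcal{U}$, sending $t_2\mapsto 1$ and $1\mapsto 0$. From this, for any $x\in A_n$ we have $F_\alpha^k(x)\in A_{n-k}$ for $k=0,\dots,n-1$, so the first index $k\geq 0$ with $F_\alpha^k(x)\in A_1$ is $k=n-1$, giving $\rho_\alpha(x)=n$ and hence $F_\alpha^*(x)=F_\alpha^n(x)$.

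Next I would compose the affine maps explicitly. An easy induction on $n$ (or a direct product-of-slopes argument) shows that on $A_n$, $F_\alpha^{n-1}$ is the unique orientation-preserving affine bijection of $A_n$ onto $A_1$, namely
\[
F_\alpha^{n-1}(x)=t_2+\frac{a_1}{a_n}\bigl(x-t_{n+1}\bigr),\qquad x\in A_n.
\]
Applying $F_\alpha|_{A_1}(y)=(1-y)/a_1$ to this and simplifying using $1-t_2=a_1$ and $a_n+t_{n+1}=t_n$, one obtains
\[
F_\alpha^n(x)=\frac{1}{a_1}\left(a_1-\frac{a_1}{a_n}(x-t_{n+1})\right)=\frac{t_n-x}{a_n},
\]
which is precisely $L_\alpha(x)$ on $A_n$. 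Since this identity holds atom-by-atom, and both maps send $0$ to $0$, the equality $F_\alpha^*=L_\alpha$ follows on all of $\mathcal{U}$.

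There is no real obstacle here; the proof is a bookkeeping exercise in composing affine maps. The only place demanding mild care is confirming that $F_\alpha$ genuinely maps $A_n$ onto $A_{n-1}$ (rather than some other interval), and tracking the two separate affine slopes $a_{n-1}/a_n$ on each $A_n$ and $-1/a_1$ on $A_1$ so that the telescoping product of slopes in $F_\alpha^{n-1}|_{A_n}$ collapses to $a_1/a_n$. Once that is in place, the final application of $F_\alpha$ on $A_1$ collapses everything to the clean formula for $L_\alpha$.
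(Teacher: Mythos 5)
Your proof is correct and rests on the same mechanism as the paper's: a point of $A_n$ is carried through $A_{n-1},\dots,A_1$ by the first $n-1$ iterates of $F_\alpha$, so $\rho_\alpha=n$ on $A_n$ and the $n$-th iterate realises $L_\alpha$. The paper phrases the intermediate steps symbolically via the digit-decrement identity $F_\alpha([\ell_1,\ell_2,\dots]_\alpha)=[\ell_1-1,\ell_2,\dots]_\alpha$, whereas you compose the affine branches explicitly and telescope the slopes to $a_1/a_n$ --- a slightly more self-contained variant, since the paper only records that identity after the lemma.
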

\begin{proof}
    First note that if $x=[1, \ell_2, \ell_3, \ldots]_\alpha\in A_1$, for some $\ell_{2}, \ell_{3},\ldots \in \N $,
then $\rho_\alpha(x)$  is clearly equal to $1$. Thus, $F_\alpha^{*}(x)=F_\alpha(x)$, which is
equal to $L_\alpha(x)$, since $L_\alpha|_{A_1}=F_\alpha|_{A_1}$.

Secondly, for $n\geq 2$ we have that $x\in A_n$ if and only if $x=[n, \ell_2, \ell_3,
\ldots]_\alpha$, for some $\ell_{2}, \ell_{3},\ldots \in \N $.   We
then have that
\[
F_\alpha^{*}(x)=F_\alpha^{n}([n, \ell_2, \ell_3,
\ldots]_\alpha)=F_\alpha^{n-1}([n-1, \ell_2, \ell_3,
\ldots]_\alpha)=\cdots = [\ell_2, \ell_3, \ldots]_\alpha= L_\alpha(x).
\]
\end{proof}

Let us now describe a Markov partition $\alpha^*$ and its associated coding for the map
$F_\alpha$.  The partition $\alpha^*$
is equal to $\{A, B\}$, where $A:=A_1$ and $B:=\U\setminus A_1$. Each
 $x\in\U$ has an infinite
Markov coding
$x=\langle x_{1}, x_{2},\ldots \rangle_{\alpha} \in \{0,1\}^{\N}$,
which, for each positive integer $k$,  is given by $x_k=1$ if and only if $F_{\alpha}^{k-1}(x)\in A$.
This coding will be referred to
 as the \textit{$\alpha$-Farey coding}.
 The associated cylinder sets
are denoted by
    \[\widehat{C}_{\alpha}(
    x_{1},\ldots,x_{n}):=\{\langle
 y_{1},y_{2}, \ldots \rangle_{\alpha}: y_{k}=x_{k}, \text{ for }k=1,\ldots,n\}.\]
Notice that all of the $\alpha$-L\"uroth cylinder sets are also
$\alpha$-Farey cylinder sets, whereas the converse of this is not true. More precisely,  a given
 $\alpha$-Farey cylinder set
$\widehat{C}_\alpha(0^{\ell_{1}-1}10^{\ell_{2}-1}10^{\ell_{3}-1}
\cdots 0^{\ell_{k}-1}1)$  coincides with the
$\alpha$-L\"uroth cylinder set
$C_\alpha(\ell_1, \ldots, \ell_k)$. Moreover, if the coding of an
$\alpha$-Farey cylinder set ends  in a $0$, then it
 cannot be  represented by a single $\alpha$-L\"uroth cylinder set.

In the sequel, we require the inverse
branches $F_{\alpha,0}$ and $F_{\alpha,1}$
of the map $F_{\alpha}$. With the convention that
$F_{\alpha,0}(0)=0$, it is straightforward to calculate that these are
given by $F_{\alpha,1}(x):=1-{a_1}x$  for $x\in {\mathcal U}$  and
\[F_{\alpha,0}(x):=\frac{a_{n+1}}{a_{n}}(x-t_{n+1})+t_{n+2}\; \text{ for
 }\;  x\in A_{n}, n\in \N.
\]
In preparation for the next lemma, we now describe the $\alpha$-Farey decomposition of the interval
$\U$, which is obtained by  iterating  the maps $F_{\alpha,
0}$ and $F_{\alpha, 1}$ on $\U$. The first iteration gives rise to the
partition $\{\widehat{C}_\alpha(0), \widehat{C}_\alpha(1)\}$. Iterating  a
second time yields the refined partition $\{\widehat{C}_\alpha(00),
\widehat{C}_\alpha(01), \widehat{C}_\alpha(11),
\widehat{C}_\alpha(10)\}$. Continuing the iteration further, we obtain successively
refined partitions of $\U$
consisting of $2^k$ $\alpha$-Farey cylinder sets of the form
$\widehat{C}_\alpha(x_1, \ldots, x_k)$, for every $k \in \N$. It is clear that
exactly half
of these are also $\alpha$-L\"uroth cylinder sets. The endpoints of each of
these so-obtained intervals are $\alpha$-rational numbers,
and every $\alpha$-rational number is obtained in this way.
Finally, note that if $x=[\ell_{1},\ell_{2},\ldots]_{\alpha}$, then
\[
F_\alpha(x):=\left\{
				 \begin{array}{ll}
				  [\ell_1-1, \ell_2, \ell_3,
				  \ldots]_\alpha, & \hbox{for $\ell_1\geq2$;} \\

				  [\ell_2, \ell_3, \ldots]_\alpha,&\hbox{for $\ell_1=1$.}
				 \end{array}
			       \right.
\]
Also observe that if we consider the {\em
dyadic partition}
$\alpha_D$ given by $a_{n}:=2^{-n}$, then
the map  $F_{\alpha_D}$ arising from this particular partition turns out to coincide with  the tent map, given by
\[
F_{\alpha_D} (x):=\left\{
		\begin{array}{ll}
		  2x & \hbox{for $x\in [0,1/2)$;} \\
		  2-2x & \hbox{for $x \in [1/2, 1]$.}
		\end{array}
	      \right.
\]
Before stating the lemma, we remind the reader that
the measure of maximal entropy $\mu_{\alpha}$ for the system $F_\alpha$
is the measure that assigns mass $2^{-n}$ to each $n$-th level
$\alpha$-Farey cylinder set.
\begin{lem}
The dynamical systems $(\mathcal{U}, {F}_{\alpha})$ and
$(\mathcal{U}, F_{\alpha_D})$ are topologically conjugate and
the conjugating
homeomorphism is given, for each $x=[ \ell_1, \ell_2, \ldots]_{\alpha}$, by
\[
{\theta_{\alpha}}(x):=-2\sum_{k=1}^\infty(-1)^k2^{-\sum_{i=1}^k \ell_i}.
\]
Moreover, the map $\theta_{\alpha}$ is equal to the distribution function of
the measure of maximal entropy  $\mu_{\alpha}$  for the $\alpha$-Farey map.
\end{lem}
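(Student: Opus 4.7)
\proc{Proof proposal.}
The plan is to recognise $\theta_\alpha$ as the digit-preserving map between L\"uroth systems. Substituting $a_n=2^{-n}$ and $t_n=2^{1-n}$ into the general $\alpha$-L\"uroth expansion formula yields
\[
[\ell_1,\ell_2,\ldots]_{\alpha_D}
=2\sum_{k=1}^\infty(-1)^{k-1}2^{-\sum_{i=1}^k\ell_i}
=-2\sum_{k=1}^\infty(-1)^{k}2^{-\sum_{i=1}^k\ell_i},
\]
which identifies $\theta_\alpha([\ell_1,\ell_2,\ldots]_\alpha)$ as the point of $\mathcal U$ whose $\alpha_D$-L\"uroth digits coincide with the $\alpha$-L\"uroth digits of $x$. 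With this dictionary, all three assertions of the lemma become essentially book-keeping, to be carried out in the order below.

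First I would show that $\theta_\alpha$ is a homeomorphism. It is well-defined, since the two finite codes of an $\alpha$-rational, $[\ldots,\ell_k,1]_\alpha=[\ldots,\ell_{k-1},\ell_k+1]_\alpha$, are mapped to the two corresponding finite codes in $\alpha_D$, which coincide for precisely the same reason; digit-preservation then delivers bijectivity onto $\mathcal U$. Monotonicity will follow from the observation that the ordering of two points in any L\"uroth system depends only on the right-to-left labelling of the atoms and on $L_\alpha$ being orientation-reversing on each atom -- data shared by $\alpha$ and $\alpha_D$. Since a monotone bijection of $[0,1]$ is automatically continuous, this step is complete.

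The conjugacy identity $\theta_\alpha\circ F_\alpha=F_{\alpha_D}\circ\theta_\alpha$ is then immediate from the digit-shift formula recorded just before the lemma, which reads identically for $F_\alpha$ and $F_{\alpha_D}$: both sides send $[\ell_1,\ell_2,\ldots]_\alpha$ to the $\alpha_D$-L\"uroth point whose digits are obtained by decrementing $\ell_1$ (if $\ell_1\geq 2$) or deleting $\ell_1$ (if $\ell_1=1$).

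For the distribution-function claim I would exploit the fact that $\theta_\alpha$ maps the level-$n$ $\alpha$-Farey cylinder with Markov code $(x_1,\ldots,x_n)$ bijectively onto the level-$n$ $\alpha_D$-Farey cylinder with the same code; since the inverse branches of the tent map $F_{\alpha_D}$ are $x\mapsto x/2$ and $x\mapsto 1-x/2$, both of slope $1/2$, the latter cylinder is a dyadic interval of Lebesgue measure $2^{-n}$. Thus $\theta_\alpha$ pushes $\mu_\alpha$ forward to Lebesgue measure on $\mathcal U$, and because $\theta_\alpha(0)=0$ this forces $\theta_\alpha(x)=\mu_\alpha([0,x])$. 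The step I expect to be genuinely delicate is the monotonicity argument, because the orientation-reversing action of $L_\alpha$ on each atom makes the induced comparison of digit sequences non-trivial; the resolution is that the resulting sign-alternation pattern is intrinsic to the L\"uroth construction and hence the order relation on digit sequences is identical across the $\alpha$- and $\alpha_D$-systems.
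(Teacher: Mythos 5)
Your proposal is correct, but it is organised quite differently from the paper's argument. The paper proceeds in the opposite order: it first proves $\theta_\alpha=\Delta_{\mu_\alpha}$ by induction on the length of the $\alpha$-L\"uroth word, tracking how the point $[\ell_1,\ldots,\ell_n,\ell_{n+1}]_\alpha$ is reached from $[\ell_1,\ldots,\ell_n]_\alpha$ through a nested chain of $\alpha$-Farey cylinders whose $\mu_\alpha$-masses sum to $2^{-\sum_{i=1}^{n+1}\ell_i+1}$, and it then verifies the conjugacy relation $F_{\alpha_D}\circ\theta_\alpha=\theta_\alpha\circ F_\alpha$ by direct manipulation of the defining series in the two cases $x\in A_1$ and $x\notin A_1$. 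You instead take as your organising principle the identity $\theta_\alpha([\ell_1,\ell_2,\ldots]_\alpha)=[\ell_1,\ell_2,\ldots]_{\alpha_D}$, obtained by substituting $a_n=2^{-n}$, $t_n=2^{1-n}$ into the general expansion formula; this is a correct and rather illuminating observation that the paper never states explicitly. It makes the conjugacy a one-line consequence of the partition-independent digit-shift formula recorded before the lemma, and it yields the distribution-function claim by pushing $\mu_\alpha$ forward to Lebesgue measure through the cylinder correspondence $\widehat{C}_\alpha(x_1,\ldots,x_n)\mapsto\widehat{C}_{\alpha_D}(x_1,\ldots,x_n)$, the latter being a dyadic interval of length $2^{-n}$. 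The price of your route is that you must argue separately that $\theta_\alpha$ is an order-preserving bijection, which you correctly flag as the delicate point; your resolution --- that the order of two points is determined by their digit sequences through a rule depending only on the right-to-left labelling of atoms and the orientation-reversal of each branch, hence is the same for $\alpha$ and $\alpha_D$ --- is sound, though in a written version you should make the comparison rule explicit (at the first differing index $j$ the inequality between $\ell_j$ and $m_j$ decides the order, with sign $(-1)^{j-1}$). The paper's route avoids this by never isolating the homeomorphism property: it is implicit there from $\theta_\alpha$ being the distribution function of an atomless, fully supported measure. Both arguments are complete in substance and reach the same conclusions.
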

\begin{proof}
We will first show by induction that the map
$\theta_{\alpha}$ is indeed equal to the distribution function
$\Delta_{\mu_{\alpha}}$ of the measure $\mu_{\alpha}$. To start,
observe that $\Delta_{\mu_{\alpha}}([1]_\alpha)=
1=\theta_\alpha([1]_\alpha)$ and notice that
for each $k\geq2$ the $\alpha$-rational
number $[k]_\alpha$ appears for the first time
in the $(k-1)$-th level of the $\alpha$-Farey
decomposition, as the right endpoint of the cylinder set
$\widehat{C}_\alpha(0, \ldots, 0)$, with code
consisting of $k-1$ zeros. By the definition of the
measure of maximal entropy, we have that
$\Delta_{\mu_{\alpha}}([k]_{\alpha})=2^{-(k-1)}=
{\theta_{\alpha}}([k]_\alpha)$.

Now, suppose that $\Delta_{\mu_{\alpha}}([\ell_1,
\ell_2, \ldots, \ell_k]_\alpha)=\theta_\alpha([\ell_1,
\ell_2, \ldots, \ell_k]_\alpha)$ for every $k$-tuple of
positive integers $\ell_1, \ldots, \ell_k$ and each $1\leq k \leq n$,
for some $n\in\N$. Further suppose that $n$ is even.
(The case where  $n$ is odd proceeds similarly.) We then
have that the points $[\ell_1,\ell_2, \ldots, \ell_n]_\alpha$ and $[\ell_1,\ell_2, \ldots, \ell_n,1]_\alpha$ are, respectively, the left and
the right endpoints of the $\left(\sum_{i=1}^n \ell_i\right)$-th level
$\alpha$-Farey cylinder set
$\widehat{C}_\alpha(0^{\ell_{1}-1}10^{\ell_{2}-1}1\cdots
0^{\ell_{n}-1}1)$. Clearly, this cylinder set has $\mu_{\alpha}$-measure
equal to  $2^{-\sum_{i=1}^n \ell_i}$. Similarly, we have that
the interval bounded by
$[\ell_1,\ell_2, \ldots, \ell_n]_\alpha$ and
$[\ell_1,\ell_2, \ldots, \ell_n,2]_\alpha $ is a $\alpha$-Farey
cylinder set of level $\left(\sum_{i=1}^n \ell_i\right)+1$ and as
such, has $\mu_{\alpha}$-measure equal to $2^{-\sum_{i=1}^n \ell_i-1}$. Continuing in this way, we reach the interval bounded by the points $[\ell_1,\ell_2, \ldots, \ell_n]_\alpha$ and $[\ell_1,\ell_2, \ldots, \ell_n,\ell_{n+1}]_\alpha $, which has $\mu_{\alpha}$-measure equal to $2^{-\sum_{i=1}^{n+1} \ell_i+1}$.

Using this, we are now in a position to finish the proof by induction, as follows.
\begin{eqnarray*}
&&\hspace{-2.2cm}\Delta_{\mu_{\alpha}}([\ell_1, \ldots, \ell_n,\ell_{n+1}]_\alpha)\\
&=&
\Delta_{\mu_{\alpha}}([\ell_1, \ldots, \ell_n]_\alpha)+
\mu_{\alpha}(([\ell_1 \ldots, \ell_n]_\alpha,[\ell_1,\ldots, \ell_n,\ell_{n+1}]_\alpha))
\\&=&\theta_\alpha([\ell_1, \ldots, \ell_n]_\alpha)+2^{-\sum_{i=1}^{n+1} \ell_i+1}=
\theta_\alpha([\ell_1, \ldots, \ell_n, \ell_{n+1}]_\alpha).
\end{eqnarray*}

It remains to show that the map $\theta_\alpha$ is the conjugating homeomorphism from $F_\alpha$ to the tent system. For this, suppose first that $x=[\ell_1, \ell_2, \ldots]_\alpha\in\U\setminus A_1$. Then, $\theta_\alpha(x)$ is an element of $[0,1/2]$ and we have that
\begin{eqnarray*}
F_{\alpha_D}\left(\theta_\alpha(x)\right)&=&2\left(-2\sum_{k=1}^\infty(-1)^k2^{-\sum_{i=1}^k \ell_i}\right)=-2\left(\sum_{k=1}^\infty(-1)^k2^{-(\ell_1-1)-\sum_{i=2}^k \ell_i}\right)\\
&=&\theta_\alpha\left([\ell_1-1, \ell_2, \ell_3,\ldots]_\alpha\right)=\theta_\alpha(F_\alpha(x)).
\end{eqnarray*}
Now, suppose that $x\in A_1$, that is, $x=[1, \ell_2, \ell_3, \ldots]_\alpha$. Then, it follows that $\theta_\alpha(x)\in[1/2,1]$ and we have that
\begin{eqnarray*}
F_{\alpha_D}(\theta_\alpha(x))&=&2-2\left(2\cdot2^{-1}-2\sum_{k=2}^\infty(-1)^k2^{-1-\sum_{i=2}^k \ell_i}\right)\\
&=&  -2\left(\sum_{k=2}^\infty(-1)^{k}2^{\sum_{i=2}^k \ell_i}\right)
=\theta_\alpha\left([\ell_2, \ell_3, \ldots]_\alpha\right)=\theta_\alpha(F_\alpha(x)).
\end{eqnarray*}

\end{proof}
Our next aim is to determine the H\"older exponent and the
sub-H\"older exponent of the map $\theta_\alpha$, for an arbitrary partition $\alpha$.
For this, we define $\kappa(n):=-n\log 2/(\log a_n)$ and set
\[
\kappa_+:=
\inf \left\{\kappa(n) :n\in\N\right\} \hbox{ and }
\kappa_-:=
\sup \left\{\kappa(n) :n\in\N\right\}.
\]
Note that for $\kappa\in(0,\infty)$ a map $S:\U \to \U$ is called {\em $\kappa$-sub-H\"older
continuous} if there exists a constant $c>0$ such that $|S(x)-S(y)|
\geq c |x-y|^{\kappa}$, for all $x,y \in \U$.
\begin{lem}
We have that the map
$\theta_\alpha$ is $\kappa_{+}$-H\"older continuous and
$\kappa_{-}$-sub-H\"older continuous.
\end{lem}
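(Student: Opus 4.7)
Since $\theta_\alpha$ coincides with the distribution function of $\mu_\alpha$, it is monotone non-decreasing, so that for $x<y$ we have $\theta_\alpha(y)-\theta_\alpha(x)=\mu_\alpha([x,y])$. Both halves of the lemma therefore reduce to establishing uniform bounds of the form $\mu_\alpha(I)\leq C\lambda(I)^{\kappa_+}$ and $\mu_\alpha(I)\geq c\lambda(I)^{\kappa_-}$ for arbitrary intervals $I\subseteq\U$. The plan is to verify these bounds first on $\alpha$-Farey cylinders, where both measures are given by explicit formulas, and then to transfer them to arbitrary intervals by a covering argument.

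A general $\alpha$-Farey cylinder has the form $C=\widehat{C}_\alpha(0^{\ell_1-1}10^{\ell_2-1}1\cdots 0^{\ell_k-1}10^m)$, with $\mu_\alpha(C)=2^{-(m+\sum_i\ell_i)}$ and $\lambda(C)=t_{m+1}\prod_i a_{\ell_i}$. Using the identity $2^{-\ell}=a_\ell^{\kappa(\ell)}$, one rewrites
\[
\frac{\mu_\alpha(C)}{\lambda(C)^{\kappa_+}}=\frac{2^{-m}}{t_{m+1}^{\kappa_+}}\prod_{i=1}^k a_{\ell_i}^{\kappa(\ell_i)-\kappa_+}.
\]
Since $\kappa(\ell_i)\geq\kappa_+$ and $a_{\ell_i}\leq 1$, the product is at most $1$. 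For the prefactor, from $t_{m+1}\geq a_{m+1}$ and $a_{m+1}^{\kappa_+}\geq a_{m+1}^{\kappa(m+1)}=2^{-(m+1)}$ one obtains $t_{m+1}^{\kappa_+}\geq 2^{-(m+1)}$, so the prefactor is at most $2$. The analogous computation for $\kappa_-$ yields $\mu_\alpha(C)\geq c\lambda(C)^{\kappa_-}$: now the product is $\geq 1$, and the inequality $a_k\leq 2^{-k/\kappa_-}$, which follows from $\kappa_-\geq\kappa(k)$, bounds $t_{m+1}=\sum_{k\geq m+1}a_k$ above by a convergent geometric series (provided $\kappa_-<\infty$; the case $\kappa_-=\infty$ is vacuous).

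The remaining step is to transfer these cylinder bounds to arbitrary intervals $I\subset\U$ using the nested structure of the Farey partition. For the upper bound, one descends the Farey tree until $I$ is covered by a bounded number of adjacent Farey cylinders of Lebesgue measure comparable to $\lambda(I)$, and then sums the cylinder estimates to obtain $\mu_\alpha(I)\leq C'\lambda(I)^{\kappa_+}$. For the lower bound one argues dually, showing that any interval $I$ contains an $\alpha$-Farey cylinder of Lebesgue measure at least a universal fraction of $\lambda(I)$, so that the cylinder estimate transfers directly. The main obstacle lies in this extension step: since Farey cylinders at a given level can differ wildly in Lebesgue measure near the accumulation point $0$ of the partition, the covering constants have to be extracted carefully by quantifying how siblings in the Farey tree partition their parents in terms of the tail sequence $(t_n)_{n\in\N}$.
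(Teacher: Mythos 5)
Your cylinder estimates are correct, and in fact slightly more general than what the paper records: you bound $\mu_\alpha(C)$ against $\lambda(C)^{\kappa_\pm}$ for \emph{all} $\alpha$-Farey cylinders, including those whose code ends in a block of zeros (where $\lambda(C)=t_{m+1}\prod_i a_{\ell_i}$), whereas the paper only needs the inequality $|\theta_\alpha(C)|\leq\lambda(C)^{\kappa_+}$ for $\alpha$-L\"uroth cylinders. The reduction of the lemma to the two measure comparisons $\mu_\alpha(I)\leq C\lambda(I)^{\kappa_+}$ and $\mu_\alpha(I)\geq c\lambda(I)^{\kappa_-}$ is also fine. The genuine gap is the transfer to arbitrary intervals, which you defer and whose difficulty you correctly sense, but for which you set up the wrong strategy: you propose to produce covering (respectively contained) Farey cylinders whose \emph{Lebesgue} measure is comparable to $\lambda(I)$. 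As you yourself observe, a parent cylinder splits into children in the ratio $a_{m+1}:t_{m+2}$, which is not bounded away from $0$, so no uniform matching of Lebesgue measures is available, and it is far from clear that ``every interval contains a Farey cylinder of Lebesgue measure at least $c\,\lambda(I)$'' holds at all. Since this is the only non-mechanical step of the proof, the argument as it stands is incomplete.

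The fix --- and the paper's actual argument --- is to never compare Lebesgue measures of cylinders with $\lambda(I)$, but to exploit that $\mu_\alpha$ is level-homogeneous: every level-$q$ Farey cylinder has $\mu_\alpha$-mass exactly $2^{-q}$. For $\alpha$-irrational $x<y$, let $p$ be the first level at which some Farey cylinder is contained in $I=(x,y)$. Minimality of $p$ forces $I$ to meet at most two adjacent level-$(p-1)$ cylinders, so $\mu_\alpha(I)\leq 2\cdot2^{-(p-1)}=8\cdot2^{-(p+1)}$, irrespective of how the Lebesgue mass is distributed. On the other hand, the level-$p$ cylinder inside $I$ has both its level-$(p+1)$ children inside $I$, one of which is a L\"uroth cylinder $C$, and your upper cylinder estimate gives $2^{-(p+1)}=\mu_\alpha(C)\leq\lambda(C)^{\kappa_+}\leq\lambda(I)^{\kappa_+}$; combining yields $\mu_\alpha(I)\leq 8\lambda(I)^{\kappa_+}$. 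Dually, $\mu_\alpha(I)\geq 2^{-p}$ from the contained cylinder, while $\lambda(I)\leq 2\max_i\lambda(E_i)$ for the two covering level-$(p-1)$ cylinders $E_1,E_2$, and your \emph{lower} cylinder estimate applied to the $E_i$ (this is precisely where its validity for general Farey cylinders, not just L\"uroth ones, is needed) gives $\lambda(E_i)^{\kappa_-}\leq c^{-1}2^{-(p-1)}$, whence $\lambda(I)^{\kappa_-}\leq 2^{\kappa_-+1}c^{-1}\mu_\alpha(I)$. So your cylinder bounds are exactly the right input, but they must be fed into a stopping argument driven by the level (equivalently, by $\mu_\alpha$) rather than by $\lambda$. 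The degenerate cases $\kappa_+=0$ and $\kappa_-=\infty$ are indeed vacuous as you say, though the paper additionally notes that for $\kappa_+=0$ no positive H\"older exponent is possible.
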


\begin{proof}
In order to calculate the H\"older exponent of $\theta_\alpha$, first
note that
\[
|\theta_\alpha(C_{\alpha}(\ell_1, \ell_2, \ldots, \ell_k))|=
2^{-\sum_{j=1}^k\ell_j}.
\]
This can be seen by simply calculating the image of the endpoints
of this cylinder, or by noting that every $\alpha$-L\"uroth cylinder
$C_{\alpha}(\ell_1, \ell_2, \ldots, \ell_k)$ is an $n$-th level
$\alpha$-Farey cylinder, where $\sum_{j=1}^k\ell_j=n$.
For the same reason, we have that $\mu_\alpha(C_{\alpha}
(\ell_1, \ell_2, \ldots, \ell_k))=|\theta_\alpha(C_{\alpha}
(\ell_1, \ell_2, \ldots, \ell_k))|$, where $\mu_\alpha$ again
denotes the measure of maximal entropy associated to the
map $F_\alpha$.  Suppose first that $\kappa_{+}$ is non-zero.
In that case, we have,
\begin{eqnarray*}
\lambda(C_{\alpha}(\ell_1, \ell_2, \ldots, \ell_k))&=&
\prod_{i=1}^k a_{\ell_i}=\prod_{i=1}^k2^{-\ell_i/\kappa(\ell_i)}\geq \left(\prod_{i=1}^k2^{-\ell_i}\right)^{1/\kappa_{+}}\\
&=&\left(2^{-\sum_{i=1}^k\ell_i}\right)^{1/\kappa_{+}}=|\theta_\alpha(C_{\alpha}(\ell_1, \ell_2, \ldots, \ell_k))|^{1/\kappa_{+}}.
\end{eqnarray*}
Or, in other words,
\[
|\theta_\alpha(C_{\alpha}(\ell_1, \ell_2, \ldots, \ell_k))|\leq \lambda(C_{\alpha}(\ell_1, \ell_2, \ldots, \ell_k))^{\kappa_{+}}.
\]
Now, let $x$ and $y$ be some arbitrary $\alpha$-irrational numbers
in $\mathcal{U}$. There must be a first time during the backwards
iteration of $\mathcal{U}$ under the inverse branches of
$F_\alpha$ in which an $\alpha$-Farey cylinder set appears
between the numbers $x$ and $y$.  Say that this cylinder set appears in the $p$-th stage of
the $\alpha$-Farey decomposition.  If we go on iterating one more time, it
is clear that there are two $(p+1)$-th level
$\alpha$-Farey intervals fully contained in the interval $(x, y)$; moreover, one
of these also has to be an $\alpha$-L\"uroth cylinder set.
Let this $\alpha$-L\"uroth cylinder set be denoted by
$C_{\alpha}(\ell_1, \ell_2, \ldots, \ell_k)$, where
$\sum_{j=1}^k\ell_j=p+1$.
This leads to the observation that, as $C_{\alpha}(\ell_1, \ell_2, \ldots, \ell_k)$ is contained in $(x, y)$,
\[
|x-y|^{\kappa_{+}}>\lambda(C_{\alpha}(\ell_1, \ell_2, \ldots, \ell_k))^{\kappa_{+}}\geq |\theta_\alpha(C_{\alpha}(\ell_1, \ell_2, \ldots, \ell_k))|=2^{-\sum_{j=1}^k\ell_j}.
\]

Consider the interval $(x, y)$ again. It  is contained inside two neighbouring $(p-1)$-th level $\alpha$-Farey intervals, and so
\[
|\theta_\alpha(x)-\theta_\alpha(y)|< 2^{-(p-1)}+2^{-(p-1)}=2^{-(p-2)} =8\cdot2^{-(p+1)}.
\]
Combining these observations, we obtain that
\[
|\theta_\alpha(x)-\theta_\alpha(y)|\leq 8|x-y|^{\kappa_{+}}.
\]

In case $\kappa_{+}$ is equal to zero,  we have that there exists $m\in\N$ with the property that
\[
\kappa(m)=\frac{m\log 2}{- \log a_m}<\frac{1}{q},
\]
that is,
\[
a_{m}<\e^{- mq\log 2}.
\]
So we have that the sequence of partition elements are eventually
exponentially decaying,
and hence, the H\"older exponent of the map $\theta_\alpha$
is necessarily equal to zero.

The proof of the $\kappa_{-}$-sub-H\"older continuity  of
$\theta_{\alpha}$  follows by similar means and is therefore left to the reader.
\end{proof}

\proc{Remark.}
    Note that the thermodynamical significance of the H\"older and sub-H\"older exponents of
    $\theta_{\alpha}$ is that they provide the extreme points of the
    region $(s_{-},s_{+})$ on which the  Hausdorff dimension
    function $\sigma_{\alpha}$
    of $F_{\alpha}$ is non-zero (see Theorem \ref{multi2}).
    More precisely, we have that
    \[ \kappa_{-}= \frac{\log 2}{s_{-}}  \hbox{ and }
    \kappa_{+}= \frac{\log 2}{s_{+}}, \]
    where $\kappa_-=\infty$ if and only if $s_-=0$.
\medbreak

\subsection{Ergodic theoretic properties of $F_{\alpha}$ and
$L_{\alpha}$}

Let us begin this subsection by showing that $L_{\alpha}$ is an exact transformation and specifying
its invariant measure.
For this the reader might like to recall that
a non-singular transformation
$T$
 of a $\sigma$--finite measure space $\left(\mathcal{U},
 \mathcal{B}, \mu\right)$
is said to be exact if for each $B \in
\bigcap_{n\in \N} T^{-n}\left(\mathcal{B}\right)$
we have that either $\mu(B)$ or $\mu(\U \setminus B)$ vanishes.
\begin{lem}\label{Lexact}
The $\alpha$-L\"uroth map $L_{\alpha}$ is measure preserving and exact with respect to  $\lambda$.
\end{lem}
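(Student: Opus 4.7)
The plan is to split the lemma into its two assertions and treat them separately.

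For \emph{measure preservation} with respect to $\lambda$, I would argue directly from the piecewise-linear definition of $L_\alpha$. On each atom $A_n$ the map is affine with slope $-1/a_n$ and maps $A_n$ bijectively onto $\U$. Hence for any Borel $B\subset\U$,
\[
\lambda\bigl(L_\alpha^{-1}(B)\bigr)=\sum_{n=1}^{\infty}\lambda\bigl(L_\alpha^{-1}(B)\cap A_n\bigr)
=\sum_{n=1}^{\infty}a_n\,\lambda(B)=\lambda(B),
\]
using $\sum a_n=1$. This is the routine half.

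For \emph{exactness}, I would exploit the Markov structure already described in the preceding subsection. The key observation is that, since $L_\alpha$ is piecewise linear with full branches, the iterate $L_\alpha^n$ restricted to an $n$-th level $\alpha$-L\"uroth cylinder $C:=C_\alpha(\ell_1,\ldots,\ell_n)$ is an affine bijection onto $\U$ with constant $|\text{Jacobian}|=\lambda(C)^{-1}=\prod_{i=1}^{n}a_{\ell_i}^{-1}$. In particular, for every Borel $E\subset C$ one has $\lambda(L_\alpha^n(E))=\lambda(E)/\lambda(C)$, i.e.\ the normalised Lebesgue measure on $C$ is pushed forward to Lebesgue measure on $\U$.

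Now let $B\in\bigcap_{n\in\N}L_\alpha^{-n}(\mathcal{B})$ with $\lambda(B)>0$, and for each $n$ write $B=L_\alpha^{-n}(B_n)$ for some Borel $B_n$. Fix $\varepsilon>0$. Since the collection of all $\alpha$-L\"uroth cylinders generates the Borel $\sigma$-algebra and their diameters tend to zero, the Lebesgue density theorem produces a cylinder $C=C_\alpha(\ell_1,\ldots,\ell_n)$ for which $\lambda(B\cap C)/\lambda(C)\geq 1-\varepsilon$. Because $L_\alpha^n|_C$ is a bijection onto $\U$, we have $L_\alpha^n(B\cap C)=L_\alpha^n(L_\alpha^{-n}(B_n)\cap C)=B_n$. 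Combining this with the Jacobian computation and the measure preservation from the first part gives
\[
\lambda(B)=\lambda(B_n)=\lambda\bigl(L_\alpha^n(B\cap C)\bigr)=\frac{\lambda(B\cap C)}{\lambda(C)}\geq 1-\varepsilon.
\]
Letting $\varepsilon\searrow 0$ yields $\lambda(B)=1$, which is exactness.

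I do not anticipate any genuine obstacle: the main subtlety is only the bookkeeping that guarantees $L_\alpha^n(L_\alpha^{-n}(B_n)\cap C)=B_n$ (which uses precisely that $L_\alpha^n|_C$ is a bijection onto the whole of $\U$), and making sure the density-point argument is applied to a cylinder rather than an arbitrary small interval so that the blow-up by $L_\alpha^n$ is affine and measure-rescaling by $\lambda(C)^{-1}$. An equivalent, slightly higher-level alternative would be to note that the coding map $x\mapsto(\ell_i(x))_{i\in\N}$ provides a measure-theoretic isomorphism between $(\U,\lambda,L_\alpha)$ and the one-sided Bernoulli shift on $\N^{\N}$ with probability vector $(a_n)_{n\in\N}$, from which both measure preservation and exactness are immediate; I would, however, prefer the direct argument above because it stays inside the geometric language already set up in Section~\ref{prelimaries}.
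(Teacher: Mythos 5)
Your proof is correct, but the exactness half takes a genuinely different route from the paper's. The measure-preservation computation is identical. For exactness, the paper adapts Kolmogorov's zero--one law: writing $B=L_\alpha^{-m}B_m$, it shows that $\lambda(B\cap\mathcal{C})=\lambda(B)\lambda(\mathcal{C})$ for every finite union $\mathcal{C}$ of cylinders (since $L_\alpha^{-m}B_m$ is "independent" of the first $m$ coordinates in the Bernoulli structure), extends this identity to all Borel sets, and then takes $C=\U\setminus B$ to force $\lambda(B)\lambda(\U\setminus B)=0$. You instead run the standard density-point argument for full-branch piecewise affine maps: blow up a cylinder on which $B$ has relative measure at least $1-\varepsilon$ by the affine bijection $L_\alpha^n|_C$, use the constant Jacobian $\lambda(C)^{-1}$ to transport that density to $B_n$, and conclude $\lambda(B)=\lambda(B_n)\geq 1-\varepsilon$. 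Both arguments are sound; yours exploits the geometric (affine, full-branch) structure and is the one that generalises to maps with merely bounded distortion, while the paper's exploits only the Bernoulli independence of the coding and the invariance of $\lambda$, and is in that sense the softer argument. Your steps all check: $L_\alpha^n(L_\alpha^{-n}(B_n)\cap C)=B_n$ (up to the measure-zero endpoint issues of the half-open atoms) precisely because $L_\alpha^n|_C$ surjects onto $\U$, and the relation $\lambda(B)=\lambda(B_n)$ follows from the invariance proved in the first half. The only point worth stating explicitly is the justification for applying the density theorem along cylinders rather than balls: since the cylinders containing an $\alpha$-irrational $x$ are nested intervals whose lengths are bounded by $(\sup_n a_n)^k\to 0$, the one-dimensional density theorem (or, as the paper does in its proof of exactness of $F_\alpha$, the martingale convergence theorem along the cylinder filtration) applies; this is a routine remark, not a gap.
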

\begin{proof}
For the proof of $L_{\alpha}$-invariance, let $L_{\alpha,n}$ denote the inverse branch of $L_{\alpha}$
associated with the $n$-th atom $A_{n}$ of $\alpha$. These branches are given by $L_{\alpha,n}(x):=-a_n x
+t_n$, for all $n \in \N$ and $ x \in [0,1)$.
Then, a straightforward calculation shows that for each
 element $B$ of the Borel $\sigma$-algebra
 ${\mathcal B}$ on $\U$,
\[
\lambda(L_{\alpha}^{-1}(B))=\sum_{n\in\N}\lambda(L_{\alpha,n} (B))=
\sum_{n\in\N}a_n \lambda(B)=\lambda(B).
\]
This gives the $L_{\alpha}$-invariance of $\lambda$.

The proof of exactness
is an adaptation of the proof of  Kolmogorov's
zero-one law  for the one-sided
Bernoulli shift (see \cite{Kolmogorov}). To see this,
let
$B \in
\bigcap_{n\in \N} L_{\alpha}^{-n}\left(\mathcal{B}\right)$ be given such  that
 $\lambda(B)>0$. Then, there exists a sequence of Borel sets $(B_{n})_{n\in \N}$ such that
 $B_{n} \in \mathcal{B}$ and
 $B=L_{\alpha}^{-n}B_{n}$, for all $n \in \N$. Note that for
every finite union $\mathcal{C}$ of $L_{\alpha}$-cylinder sets
 we have that
 \[\lambda(B \cap \mathcal{C}) =\lambda(B) \lambda(\mathcal{C}).\]
 Indeed, since $\lambda(B)=\lambda(B_{n})$ for all $n\in \N$,  if $m$ is the maximal length of the cylinder sets in
 $\mathcal{C}$,
 then \[\lambda(\mathcal{C} \cap B) =\lambda(\mathcal{C} \cap
 L_{\alpha}^{-m}B_{m} )=
  \lambda(\mathcal{C}) \lambda( B_{m}) =
  \lambda(\mathcal{C}) \lambda(B).\] From this we deduce that
  \[ \lambda(B \cap C) =\lambda(B) \lambda(C), \hbox{ for all } C \in
  \mathcal{B}.\]
Therefore, by choosing $C= \U \setminus B$, we conclude that
\[ 0=\lambda(B \cap (\U \setminus B)) = \lambda(B) \lambda(\U \setminus
B).\]
This shows that $\lambda(B)=1$, and hence  finishes the proof.
\end{proof}
Since exactness clearly implies ergodicity,
the following list of properties of the system $(\U,
\mathcal{B},L_{\alpha},\lambda)$ is derived from routine ergodic
theoretical arguments, and therefore the proofs are left to the reader.

\noindent For $\lambda$-almost every $x\in\mathcal{U}$, we have that:

\begin{itemize}
    \item $ \displaystyle{
    \lim_{n\to \infty}\frac1n \#\{j\leq n: \ell_j(x)=k\}=a_k.
    }$
    \item $ \displaystyle{
    \lim_{n\to \infty}\frac1n
    \log\left(\prod_{j=1}^{n}\ell_j(x)\right) =\sum_{k=1}^\infty a_k\log k.
    }$
    \item $ \displaystyle{
    \lim_{n\to \infty}\frac1n \sum_{j=1}^{n}\ell_j(x)
    =\sum_{k=1}^\infty t_k.
    }$
    \item $ \displaystyle{
    \lim_{n\to \infty}\frac1n \log\left|x-r_n^{(\alpha)}(x) \right| =\sum_{k=1}^\infty a_k\log a_k.
    }$
    \end{itemize}

We now turn our attention to the ergodic theoretical properties
of the
$\alpha$-Farey system.  The first property to note is that $F_{\alpha}$
is a conservative transformation.  This can be seen, for instance, by
observing that $\bigcup_{n=0}^{\infty} F_{\alpha}^{-n}(A_{1}) = \U \setminus\{0\}$,
and hence,  Maharam's Recurrence
Theorem (\cite[Theorem 1.1.7]{Aaronson:97}) applies, giving that
$F_{\alpha}$ is conservative.

Recall that a $\lambda$-absolutely continuous measure $\nu$ on $\U$ is called
$F_\alpha$-invariant if
$\nu \circ F_\alpha^{-1}=\nu$,  or, equivalently, if
$\mathcal{F}_\alpha\left(\1_{\mathcal{U}}\right)=\1_{\mathcal{U}}$,
where $\mathcal{F}_\alpha:L^{1}\left(\nu\right)\to
L^{1}\left(\nu\right)$ denotes the \textit{transfer operator}
associated with the $\alpha$-Farey system. This
is a positive
linear operator
given by \[
\int_B \mathcal{F}_\alpha(f)\
d\nu=\int_{F_\alpha^{-1}(B)}f\ d\nu,\mbox{
for all }f\in L^{1}\left(\nu\right)\text{ and }B\in\mathcal{B}.\]
Also, note that the \textit{Ruelle operator}
$\mathcal{R}_{\alpha}:L^{1}\left(\nu\right)\to
L^{1}\left(\nu\right)$
for the $\alpha$-Farey system is given by \[
\mathcal{R}_{\alpha} \left(f\right)=\left|{F_{\alpha,0}}'\right|\cdot\left(f\circ
F_{\alpha,0}\right)+\left|{F_{\alpha,1}}'\right|\cdot\left(f\circ F_{\alpha,1}\right),\mbox{ for all
}f\in L^{1}\left(\nu\right).\]
With $\psi:=d \nu / d \lambda$ denoting the density of $\nu$, one immediately verifies that $\mathcal{F}_\alpha$
and $\mathcal{R}_{\alpha}$ are related in the
following way:
\[
\mathcal{F}_{\alpha}(f)=
\frac{1}{ \psi}  \cdot \mathcal{R}_{\alpha}
\left( \psi \cdot
f\right), \mbox{ for all }f\in L^{1}\left(\nu\right).
\]
So, in order to verify that a particular function $\psi$ is a
density  which gives  rise to an invariant measure for the
map $F_\alpha$,
it is sufficient to show that $\psi$ is an
eigenfunction of $\mathcal{R}_{\alpha}$.

\begin{lem}\label{density} Up to multiplication by a  constant, there exists a unique
 $\lambda$-absolutely continuous invariant measure $\nu_\alpha$
 for the system
$(\mathcal{U}, \mathcal{B}, F_{\alpha})$. The density $\phi_{\alpha}$ of $\nu_{\alpha}$
is given, up to multiplication by a  constant,  by
\[
\phi_{\alpha}:=\frac{d\nu_\alpha}{d\lambda}=\sum_{n=1}^{\infty} \frac{t_n}{a_n}  \cdot \1_{A_{n}}.
\]
Moreover,  $\nu_{\alpha}$ is a $\sigma$-finite measure, and we have
that $\nu_{\alpha}$ is an infinite measure
 if and only if
 $\alpha$  is of infinite type.
\end{lem}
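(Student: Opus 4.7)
The plan is to exhibit $\phi_\alpha$ as a fixed point of the Ruelle operator $\mathcal{R}_\alpha$, deduce $F_\alpha$-invariance via the identity $\mathcal{F}_\alpha(f)=\psi^{-1}\mathcal{R}_\alpha(\psi f)$ recorded just above the lemma, then obtain uniqueness from ergodicity, and finally read off $\sigma$-finiteness from the measures of the atoms.

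First, I would compute $\mathcal{R}_\alpha(\phi_\alpha)$ explicitly on each $A_n$. Using the formulas for the inverse branches recalled in the text, $F_{\alpha,1}(x)=1-a_1 x$ maps $\U$ into $A_1$ with $|F_{\alpha,1}'|\equiv a_1$, while for $x\in A_n$ the branch $F_{\alpha,0}$ maps $A_n$ into $A_{n+1}$ with $|F_{\alpha,0}'(x)|=a_{n+1}/a_n$. Plugging these into the definition of $\mathcal{R}_\alpha$ and using $\phi_\alpha(F_{\alpha,1}(x))=t_1/a_1=1/a_1$ and $\phi_\alpha(F_{\alpha,0}(x))=t_{n+1}/a_{n+1}$ for $x\in A_n$, one obtains
\[
\mathcal{R}_\alpha(\phi_\alpha)(x)=\frac{a_{n+1}}{a_n}\cdot\frac{t_{n+1}}{a_{n+1}}+a_1\cdot\frac{1}{a_1}=\frac{t_{n+1}+a_n}{a_n}=\frac{t_n}{a_n}=\phi_\alpha(x),
\]
where in the penultimate equality I use the telescoping identity $t_n=a_n+t_{n+1}$. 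Hence $\phi_\alpha$ is a fixed point of $\mathcal{R}_\alpha$, and by the relation between $\mathcal{R}_\alpha$ and $\mathcal{F}_\alpha$ stated in the excerpt this translates into $\mathcal{F}_\alpha(\mathbf{1}_\U)=\mathbf{1}_\U$ with respect to $\nu_\alpha:=\phi_\alpha\,d\lambda$, i.e.\ $F_\alpha$-invariance.

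For uniqueness up to scaling, I would argue that $F_\alpha$ is ergodic with respect to $\lambda$. Conservativity of $F_\alpha$ is already available from the discussion preceding the lemma (via Maharam's recurrence theorem applied to $\bigcup_{n\geq 0}F_\alpha^{-n}(A_1)=\U\setminus\{0\}$). Since by Lemma \ref{lem:2.1} the jump transformation of $F_\alpha$ is $L_\alpha$, which is exact (hence ergodic) by Lemma \ref{Lexact}, a standard principle for jump transformations of conservative maps transfers ergodicity from $L_\alpha$ back to $F_\alpha$: any $F_\alpha$-invariant set is in particular $L_\alpha$-invariant, so must be null or co-null. Uniqueness of the $\lambda$-absolutely continuous invariant measure up to a multiplicative constant then follows from the standard Hopf-type dichotomy for conservative ergodic nonsingular transformations.

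Finally, the $\sigma$-finiteness assertion and the characterisation of infiniteness are essentially a calculation: $\nu_\alpha(A_n)=\int_{A_n}(t_n/a_n)\,d\lambda=t_n<\infty$ for every $n$, while $\{0\}$ is $\lambda$-null, so $\U=\{0\}\cup\bigsqcup_{n\in\N}A_n$ exhibits $\nu_\alpha$ as $\sigma$-finite; moreover $\nu_\alpha(\U)=\sum_{n=1}^\infty t_n$, which is finite precisely when $\alpha$ is of finite type. The main obstacle in carrying out the plan is the ergodicity transfer step; I expect that this requires a careful but standard invocation of the jump-transformation formalism together with conservativity, since everything else reduces to the direct telescoping computation above.
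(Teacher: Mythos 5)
Your proposal is correct, and the core of it --- the telescoping computation $\mathcal{R}_{\alpha}(\phi_{\alpha})=t_{n+1}/a_{n}+1=t_{n}/a_{n}$ on each $A_{n}$, followed by $\nu_{\alpha}(A_{n})=t_{n}$ and $\nu_{\alpha}(\U)=\sum_{n}t_{n}$ --- is exactly the paper's argument. The one place where you genuinely diverge is the ergodicity input for uniqueness: the paper forward-references its separate exactness proof for $F_{\alpha}$ (Lemma \ref{F-exact}) and then applies the standard uniqueness theorem for conservative ergodic nonsingular transformations, whereas you transfer ergodicity from the jump transformation $L_{\alpha}$ (already known to be exact by Lemma \ref{Lexact}) back to $F_{\alpha}$, using that a strictly $F_{\alpha}$-invariant set satisfies $L_{\alpha}^{-1}B=\{x:F_{\alpha}^{\rho_{\alpha}(x)}(x)\in B\}=B$. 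Your route has the advantage of being self-contained at the point where the lemma appears (no forward reference) and of needing only ergodicity rather than the stronger exactness statement; the paper's route gets exactness of $F_{\alpha}$ as a result of independent interest and simply reuses it here. Do note that your transfer argument, as stated, handles strictly invariant sets, so to conclude ergodicity in the mod-$\lambda$ sense you should add the standard reduction (replace a mod-$0$ invariant set by a strictly invariant version, using nonsingularity and conservativity); this is routine but should be acknowledged rather than absorbed into the phrase ``in particular $L_{\alpha}$-invariant.''
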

\begin{proof} Recall that the inverse branches $F_{\alpha,1}$ and $F_{\alpha,0}$ were defined in Section 2.1 above and
note that a straightforward computation shows that for these we have that
\[ \phi_{\alpha} \circ F_{\alpha,1} = t_{1}/a_{1} \cdot \1_{\U} \hbox{  and  } \phi_{\alpha} \circ F_{\alpha,0} =
\sum_{n=1}^{\infty} t_{n+1}/a_{n+1} \cdot \1_{A_{n}}.\]
Moreover, one immediately verifies that
\[ |F_{\alpha,1}'|= a_{1} \cdot \1_{\U} \hbox{  and  }
|F_{\alpha,0}'|=\sum_{n=1}^{\infty} a_{n+1}/a_{n} \cdot \1_{A_{n}}.\]
Using these two observations, it follows that
\begin{eqnarray*}
\mathcal{R}_{\alpha}(\phi_\alpha)&=&|{F_{\alpha,0}}'|\cdot(\phi_\alpha \circ
F_{\alpha,0}) +|{F_{\alpha,1}}'|\cdot
(\phi_\alpha \circ F_{\alpha,1}) \\
&=& t_{1} \cdot\1_{\U} +
\sum_{n=1}^{\infty} \left( \frac{a_{n+1}}{a_{n}} \frac{t_{n+1}}{a_{n+1}}
\right) \cdot \1_{A_{n}}\\
&=& \sum_{n=1}^{\infty} \left( \frac{t_{n+1}}{a_{n}}  +1
\right) \cdot \1_{A_{n}} = \sum_{n=1}^{\infty} \frac{t_n}{a_{n}}
\cdot \1_{A_{n}}
= \phi_\alpha.
\end{eqnarray*}
This proves all but uniqueness in the first assertion of the lemma.

For the second statement of the lemma, a simple calculation shows that
\[
\nu_{\alpha}\left({\mathcal U}\right)=
\nu_{\alpha}\left(\bigcup_{k=1}^{\infty}A_{k}\right)=
\sum_{k=1}^{\infty}\nu_{\alpha}(A_{k})=
\sum_{k=1}^{\infty}\int_{A_{k}} \phi_\alpha\
d\lambda=\sum_{k=1}^{\infty}\frac{t_k}{a_k}\cdot a_k=
\sum_{k=1}^{\infty}t_k.
\]
Finally, note that the uniqueness of $\nu_{\alpha}$ follows, since, as
we will see in Lemma \ref{F-exact} below, we have that
$F_{\alpha}$ is ergodic. By combining this with the fact that
$F_{\alpha}$ is  conservative, an application of
\cite[Theorem 1.5.6]{Aaronson:97} then gives that $\nu_{\alpha}$ is in fact
unique.
This finishes the proof of the lemma.
\end{proof}
\begin{lem}\label{F-exact}
The $\alpha$-Farey map $F_{\alpha}$ is exact.
\end{lem}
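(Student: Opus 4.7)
The plan is to deduce exactness of $F_\alpha$ from that of its jump transformation $L_\alpha$ (Lemma \ref{Lexact}), exploiting the conservativity of $F_\alpha$ already established and the fact that $A_1$ is a sweep-out set with $\nu_\alpha(A_1) = t_1 = 1 < \infty$.

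First I would observe that the induced transformation $(F_\alpha)_{A_1}$ on $A_1$ is metrically isomorphic to $L_\alpha$. Indeed, for $x = [1, \ell_2, \ell_3, \ldots]_\alpha \in A_1$, iterating $F_\alpha$ successively decreases the first Lüroth digit by one, so the first return to $A_1$ occurs at time $\phi_{A_1}(x) = \ell_2$ and $(F_\alpha)_{A_1}(x) = [1, \ell_3, \ell_4, \ldots]_\alpha$. The affine bijection $\sigma := L_\alpha|_{A_1} \colon A_1 \to \U$, $\sigma(x) = (1-x)/a_1$, then satisfies $\sigma \circ (F_\alpha)_{A_1} = L_\alpha \circ \sigma$ by a one-line direct calculation, and transports the normalised Lebesgue measure $a_1^{-1} \lambda|_{A_1}$ onto $\lambda$ on $\U$. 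Since exactness is invariant under metric isomorphism, Lemma \ref{Lexact} yields the exactness of $(F_\alpha)_{A_1}$.

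Next I would invoke the standard principle from infinite ergodic theory that, for a conservative measure preserving transformation $T$ of a $\sigma$-finite space with a sweep-out set $A$ of finite invariant measure, $T$ is exact if and only if the induced map $T_A$ is exact (see e.g.\ \cite{Aaronson:97}). Applying this to $T = F_\alpha$ and $A = A_1$ completes the proof.

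The main obstacle I anticipate is locating a clean reference for the transfer principle in exactly the form required; the analogous statement for ergodicity is classical, but the infinite-measure version for exactness needs some care. Should a direct reference be unavailable, a self-contained proof can be given along the lines of Lemma \ref{Lexact}: given $B \in \bigcap_{n} F_\alpha^{-n}(\mathcal{B})$, write $B = F_\alpha^{-n}(B_n)$ with $\nu_\alpha(B_n) = \nu_\alpha(B)$ for every $n$; the coincidence of $L_\alpha^k$ with $F_\alpha^{\ell_1 + \cdots + \ell_k}$ as an affine bijection on each $\alpha$-Lüroth cylinder onto $\U$ gives
\[
\lambda\bigl(B \cap C_\alpha(\ell_1, \ldots, \ell_k)\bigr) = \lambda\bigl(C_\alpha(\ell_1, \ldots, \ell_k)\bigr) \cdot \lambda\bigl(B_{\ell_1 + \cdots + \ell_k}\bigr),
\]
and applying the Lebesgue density theorem at density points of both $B$ and $\U \setminus B$, combined with the $F_\alpha$-invariance of $\nu_\alpha$, forces $\nu_\alpha(B) \, \nu_\alpha(\U \setminus B) = 0$.
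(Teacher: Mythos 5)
Your reduction to the induced map is set up correctly: the first return time of $x=[1,\ell_2,\ell_3,\ldots]_\alpha$ to $A_1$ is indeed $\ell_2$, the induced map $(F_\alpha)_{A_1}$ is conjugate to $L_\alpha$ via $L_\alpha|_{A_1}$, and your displayed identity $\lambda(B\cap C_\alpha(\ell_1,\ldots,\ell_k))=\lambda(C_\alpha(\ell_1,\ldots,\ell_k))\,\lambda(B_{\ell_1+\cdots+\ell_k})$ is exactly the computation at the heart of the paper's proof. But the ``standard principle'' you invoke --- that a conservative measure preserving $T$ with a finite-measure sweep-out set $A$ is exact if and only if $T_A$ is exact --- is false. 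Take $T_A$ a one-sided Bernoulli shift $S$ on $(A,m)$ and let $T$ be the tower of constant height $2$ over it, i.e.\ $T(x,0)=(x,1)$ and $T(x,1)=(Sx,0)$ on $A\times\{0,1\}$. Then $A\times\{0\}$ is a finite-measure sweep-out set whose induced map is $S$ (exact), yet $A\times\{0\}=T^{-2}(A\times\{0\})$ lies in $\bigcap_n T^{-n}\mathcal{B}$ and is neither null nor conull, so $T$ is not exact. Unlike ergodicity, exactness does not pass up from the induced transformation without an aperiodicity hypothesis on the return-time function; for $F_\alpha$ the return times to $A_1$ take every value in $\N$, and this is precisely the extra structural input that must be used but that your main route never touches.

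Your fallback sketch stumbles at the same spot. Martingale convergence (or your density-point argument) gives, for a.e.\ $x\in B$, that $\lambda(B_n)\to 1$ along the subsequence $n\in\{\ell_1(x)+\cdots+\ell_k(x):k\in\N\}$, and for a.e.\ $y\notin B$ that $\lambda(B_n)\to 0$ along the corresponding subsequence for $y$. These are two subsequences of $\N$ that need not intersect, so no contradiction follows yet; in the height-$2$ example above the two subsequences are exactly the even and the odd integers. The missing step --- and the one the paper supplies --- is to show that the limit set $\Omega=\{x:\lim_n\lambda(B_{\rho^{(n)}(x)})>0\}$ satisfies $L_\alpha^{-1}\Omega\subset\Omega \mod\lambda$, which the paper obtains from the quantitative estimate that for each $\ell$ there is $\kappa>0$ with $\lambda(F_\alpha^{\ell}C)>\kappa$ whenever $\lambda(C)>\epsilon$ (proved via the $\ell$-th refinement of the Markov partition $\{A_1,\U\setminus A_1\}$); ergodicity of $L_\alpha$ then forces $\lambda(\Omega)=1$. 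If you prefer a citation-based route, the correct reference is not an induced-map transfer principle but the exactness criterion for irreducible aperiodic Markov fibred systems (Aaronson--Denker--Urba\'nski, \cite{ADU}), applied to the countable-state Markov coding of $F_\alpha$ in which $A_1$ communicates with every state and $A_1\to A_1$ is allowed.
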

\begin{proof}
    Let  $B_0	\in\bigcap_{n\in \N}F_{\alpha}^{-n}
    \mathcal{B}$ be given such that $\lambda(B_0)>0$.  Since $\nu_{\alpha}$ and $\lambda$
are
    absolutely continuous with respect to each other, it is sufficient to show the exactness
    of $F_{\alpha}$ with respect to $\lambda$.
    Therefore,  the aim is to show that
$\lambda(B_0^{c})=0$.
    For this, first note that, since
    $B_0	\in\bigcap_{n\in \N}F_{\alpha}^{-n}
    \mathcal{B}$,  there exists a sequence $\left(B_{n}\right)_{n\in
\N}$ in $\mathcal{B}$ such that $B_{0}=F_{\alpha}^{-n}B_n$, for all $n\in
\N_{0}$.
    Clearly,  we then have that $B_{k+m}=F_{\alpha}^{k}B_m$, for all $k,m\in
\N_0$.
    Secondly, recalling that since $F_{\alpha}$ is conservative, we have that
    $\rho_{\alpha}$ is finite,
$\lambda$-almost everywhere, where $\rho_{\alpha}(x):=\inf\{n\geq0:F_\alpha^n(x)\in A_1\}+1$.
       Also, define $\rho^{(n)}:=\sum_{k=0}^{n-1}\left(\rho_{\alpha}\circ
    \left(L_{\alpha}^k\right)\right)$.
 Using  the facts that
 $\lambda$  is $L_{\alpha}$--invariant and
Bernoulli with respect to
 ${L_{\alpha}}$,  we obtain  for
 $\lambda$-almost every
    $x=\langle x_{1},x_{2}, \ldots \rangle_{\alpha}
    =[\ell_1,\ell_2,\ldots]_{\alpha}$,
\begin{eqnarray*}
    \lambda\left(B_0|\widehat{C}_{\alpha}(x_{1},\ldots,x_{\rho^{(n)}\left(x\right)})
    \right)
      & = &
      \frac{\lambda\left(F_{\alpha}^{-(\rho^{(n)}\left(x\right))}B_{\rho^{(n)}\left(x
    \right)}\cap \widehat{C}_{\alpha}(x_{1},\ldots,x_{\rho^{(n)}\left(x\right)})
    \right)}{\lambda\left(\widehat{C}_{\alpha}(x_{1},\ldots,x_{\rho^{(n)}\left(x
    \right)})\right)} \\
    & = &
	  \frac{\lambda\left( {L_{\alpha}}^{-n}B_{\rho^{(n)}\left(x\right)}\cap C_{\alpha}( \ell_1,\ldots ,\ell_n)\right)}{ \lambda\left(C_{\alpha}(\ell_1,\ldots ,\ell_n)\right)} \\
    & = &
\frac{\lambda\left({L_{\alpha}}^{-n}B_{\rho^{(n)}
    \left(x\right)}\right) \lambda\left(C_{\alpha}(\ell_1,\ldots ,\ell_n)\right)}
{\lambda \left(C_{\alpha}(\ell_1,\ldots ,\ell_n)\right)}\\
       & = &  \lambda \left(B_{\rho^{(n)}  \left(x\right)}\right).
     \end{eqnarray*}
    Also, by the Martingale Convergence Theorem (\cite{DOOB}), we
have for
    $\lambda$-almost
every $x=\langle x_{1},x_{2}, \ldots \rangle_{\alpha}$, that   \begin{eqnarray*}
   \lim_{n\to \infty}  \, \,   \lambda\left(B_0 |
   \widehat{C}_{\alpha}(
x_{1},\ldots,x_{\rho^{(n)}\left(x\right)})
    \right)=\1_{B_0}(x).
     \end{eqnarray*}
Combining these observations, it follows that $B_0$ coincides up to a set of measure zero with the set $\Omega$, where
 $\Omega$ is defined by \[\Omega:=\{x\in \U: \lim_{n\to \infty}
   \lambda \left(B_{\rho^{(n)}
    \left(x\right)}\right)>0  \}.\]
Since, by  assumption,  $\lambda(B_0)>0$, we now have that
$\lambda(\Omega)>0$. Hence, to finish the proof, we are
left to show that
$\lambda(\Omega)=1$. For this, recall that
$\lambda$ is  $L_{\alpha}$-invariant and ergodic. Thus, it is
sufficient to show
that ${L_{\alpha}}^{-1}\Omega\subset \Omega \mod
\lambda$. In other words,
in order to complete the proof, we are left to show that $\lim_{n\to\infty}
\lambda(B_{\rho^{(n)}({L_{\alpha}}(x)) })>0$ implies that
$\lim_{n\to\infty} \lambda(B_{\rho^{(n)}(x) })>0$.
Since
\[ B_{\rho^{(n+1)}(x)}=B_{\rho(x)+\rho^{(n)}({L_{\alpha}}
(x))}=F_{\alpha}^{\rho(x)}B_{\rho^{(n)}({L_{\alpha}}(x)) },\]
the latter assertion would hold if we establish that  for each
$\epsilon>0$ and $\ell \in \N$
there exists $\kappa>0$ such that for all $C\in \mathcal{B}$ with
$\lambda(C)>\epsilon$ we
have $\lambda(F_{\alpha}^{\ell}C)>\kappa$.
Therefore,  assume that $\lambda(C)>\epsilon$, and let
$\alpha_{\ell}^{*}$
denote the $\ell$-th refinement of the Markov partition $\alpha^*$ for the map $F_{\alpha}$.
Also, one clearly can remove an open neighbourhood of the boundary points of the intervals in $\alpha^*_\ell$
to obtain a closed set $U\subset\mathcal{U}$ such that $\lambda(U)>1-\epsilon/2$.
Since,
there are $ 2^{\ell}$ elements in $\alpha_{\ell}^{*}$, this immediately implies that  $\lambda(C\cap B\cap U)> \epsilon
2^{-\ell-1}$, for some $B\in \alpha_{\ell}^{*}$.
By combining the fact  that $F_{\alpha}^{\ell}:B\to \U$ is
bijective and
the fact that by the choice of $U$ there exists a constant  $c>0$ such that  $
\left( d (\lambda\circ
F_{\alpha}^{\ell})/d \lambda \right)(y) > c$ for all $y \in B\cap U$,
it now follows
that $\lambda(F_{\alpha}^{\ell}C)\geq\lambda(F_{\alpha}^{\ell}(C\cap B\cap U)) > c 2^{-\ell-1}\epsilon$.
Hence, by setting in the above $\kappa:=c 2^{-\ell-1} \epsilon$,
the proof follows.
\end{proof}
We end this section by stating the following
applications of
some general results from infinite ergodic theory to
the system $(\U,
\mathcal{B},F_{\alpha},\nu_{\alpha})$. Note that the first, but only the first,  is also valid for $\alpha$ of finite type.
\begin{itemize}
    \item A consequence of \emph{ Hopf's Ergodic Theorem }
    (\cite{hopf}): \\ For each
    non-negative
    $f \in L^{1}(\lambda)$ with $\int_{\U}f \, d
    \lambda > 0$, we have that
    \[ \lim_{n\to \infty} \sum_{k=0}^{n-1} f(F_{\alpha}^{k} (x))
    =\infty, \hbox{ for $\lambda$-almost every $x \in \U$}.\]
    \item A consequence of \emph{Krengel's Theorem} (\cite{Krengel1}): \\
    If $\alpha$  is of infinite
	type, then we have, for each $\epsilon>0$,
	\[\ \ \ \ \ \ \
 \lim_{n \to \infty} \lambda \left(\left\{x \in \U:\left|1/n
	\sum_{k=0}^{n-1} f(F_{\alpha}^{k}(x) )\right|\geq
	\epsilon \right\}\right) =0, \hbox{ for all } f \in L^{1}(\lambda), f \geq 0.
\]
    \item A consequence of \emph{ Aaronson's
    Theorem} (\cite[Theorem 2.4.2]{Aaronson:97}): \\  If $\alpha$  is
    of infinite
	type, then we have, for each $f \in L^{1}(\lambda)$
	such that $f \geq 0$
	and for each
	sequence$(c_{n})_{n \in \N}$ of positive
	integers,
	 that either
	 \[
\liminf_{n\to \infty} \frac{\sum_{j=0}^{n-1} f(F_{\alpha}^{j}
(x))}{c_{n}} =0,\] or, there exists a subsequence $(c_{n_k})_{k\in \N}$  such
that
\[
 \lim_{k\to \infty} \frac{\sum_{j=0}^{n_{k}-1} f(F_{\alpha}^{j}
(x))}{c_{n_{k}}} =\infty.\]
     \item A consequence of \emph{Lin's Criterion for exactness}
     (\cite{Lin}):\\
     Since $F_{\alpha}$ is exact, we have that if $\alpha$  is of
     infinite type, then
    \[ \lim_{n  \to \infty} \int|{\mathcal F}_{\alpha}^{n}(f)|\ d\nu_\alpha = 0, \mbox{
   for all $ f \in L^{1}(\nu_\alpha)$ such that $ \int f \ d\nu_\alpha=0$}.\]
    \end{itemize}

\section{Renewal theory}\label{sec:renewal}

In this section we  study the sequence
of the Lebesgue measures of the $\alpha$-sum-level sets for a given
 partition $\alpha$.
Recall from the introduction that the $\alpha$-sum-level sets  are
given, for each $n \in \N_{0}$,  by
\[
\mathcal{L}^{(\alpha)}_n:=\left\{x \in C_{\alpha}(\ell_1, \ell_2, \ldots,
\ell_k):\sum_{i=1}^k \ell_i=n, \hbox{ for some } k\in\N\right\},
\]
where, for later convenience, we have set
$\mathcal{L}^{(\alpha)}_0:=\mathcal{U}$.
The first members of this sequence
are as follows:{\footnotesize
\begin{eqnarray*}
& \mathcal{U}\\
& C_\alpha( 1)\\
& C_\alpha (2) \cup
C_\alpha( 1,1 )\\
 & C_\alpha( 3)\cup
C_\alpha( 1,2)\cup C_\alpha( 2,1)\cup
C_\alpha(1,1,1)\\
&C_\alpha( 4)\cup
C_\alpha(3,1)\cup C_\alpha(2,2)\cup
C_\alpha(2,1,1)\cup C_\alpha(1,3)\cup
C_\alpha(1,2,1)\cup C_\alpha(1,1,2)\cup
C_\alpha(1,1,1,1)
\end{eqnarray*}}

In order to obtain
 precise rates for the decay of the Lebesgue measure of the $\alpha$-sum-level
sets $\mathcal{L}^{(\alpha)}_{n}$, we employ some
arguments from renewal theory.
We begin our discussion  with the following
crucial observation, which shows that the sequence
of the Lebesgue measures of the $\alpha$-sum-level sets
satisfies a renewal equation.

\begin{lem}[Renewal Equation]\label{renewaltype}

    For each $n\in\N$, we have that
\[
\lambda(\mathcal{L}^{(\alpha)}_{n})=\sum_{m=1}^{n}a_m\lambda(\mathcal{L}^{(\alpha)}_{n-m}).
\]
\end{lem}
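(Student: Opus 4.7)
The plan is to derive the renewal equation directly from the definition of the $\alpha$-sum-level sets by conditioning on the first $\alpha$-L\"uroth digit. The key structural fact I would use is that if $x=[\ell_1,\ell_2,\ldots]_\alpha$, then $x\in\mathcal{L}^{(\alpha)}_n$ if and only if $\ell_1\in\{1,\ldots,n\}$ and the remaining digits of $L_\alpha(x)=[\ell_2,\ell_3,\ldots]_\alpha$ sum to $n-\ell_1$, i.e., $L_\alpha(x)\in\mathcal{L}^{(\alpha)}_{n-\ell_1}$ (using the convention $\mathcal{L}^{(\alpha)}_0=\mathcal{U}$, which absorbs the case $\ell_1=n$ where the expansion terminates after one digit).

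Concretely, I would first observe that up to the countable (hence $\lambda$-null) set of $\alpha$-rationals, one has the disjoint decomposition
\[
\mathcal{L}^{(\alpha)}_n \;=\; \bigsqcup_{m=1}^{n}\bigl(A_m \cap L_\alpha^{-1}(\mathcal{L}^{(\alpha)}_{n-m})\bigr).
\]
Disjointness follows because the sets $A_m$ are pairwise disjoint (they form the partition $\alpha$), and the displayed equality follows from unwinding the definition: $x\in C_\alpha(\ell_1,\ldots,\ell_k)$ with $\sum_i \ell_i=n$ precisely means $x\in A_{\ell_1}$ and $L_\alpha(x)\in C_\alpha(\ell_2,\ldots,\ell_k)\subseteq\mathcal{L}^{(\alpha)}_{n-\ell_1}$.

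Next, I would invoke the explicit formula $L_\alpha(x)=(t_m-x)/a_m$ on $A_m$, which shows that $L_\alpha$ maps $A_m$ affinely and bijectively onto $\mathcal{U}$ with constant Jacobian $1/a_m$. Consequently, for any Borel set $E\subseteq\mathcal{U}$,
\[
\lambda\bigl(A_m\cap L_\alpha^{-1}(E)\bigr) \;=\; a_m\cdot\lambda(E).
\]
Taking $E=\mathcal{L}^{(\alpha)}_{n-m}$ and summing over $m=1,\ldots,n$ then yields
\[
\lambda(\mathcal{L}^{(\alpha)}_n) \;=\; \sum_{m=1}^{n}\lambda\bigl(A_m\cap L_\alpha^{-1}(\mathcal{L}^{(\alpha)}_{n-m})\bigr) \;=\; \sum_{m=1}^{n}a_m\,\lambda(\mathcal{L}^{(\alpha)}_{n-m}),
\]
which is the desired renewal equation.

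I do not foresee a serious obstacle here: once the decomposition into first-digit contributions is written down, everything reduces to the affine structure of $L_\alpha$ on each atom $A_m$. The only minor care needed is the boundary case $m=n$, which uses the convention $\mathcal{L}^{(\alpha)}_0=\mathcal{U}$ (so the term is $a_n\cdot 1=\lambda(A_n)$, matching the contribution of the single cylinder $C_\alpha(n)$), and the tacit handling of the $\lambda$-null set of $\alpha$-rationals, where $L_\alpha$-expansions may terminate.
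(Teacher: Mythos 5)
Your argument is correct, and it takes a mildly but genuinely different route from the paper's. The paper decomposes $\mathcal{L}^{(\alpha)}_n$ into its constituent cylinders and groups them by the \emph{last} digit $m$, then factors out $a_m$ using the product formula $\lambda(C_\alpha(\ell_1,\ldots,\ell_k,m))=a_m\,\lambda(C_\alpha(\ell_1,\ldots,\ell_k))$; no dynamics is invoked, only the combinatorics of cylinder measures. You instead condition on the \emph{first} digit, writing $\mathcal{L}^{(\alpha)}_n$ as the essentially disjoint union of the sets $A_m\cap L_\alpha^{-1}(\mathcal{L}^{(\alpha)}_{n-m})$ and using the fact that $L_\alpha$ is affine with Jacobian $1/a_m$ on $A_m$, so that $\lambda(A_m\cap L_\alpha^{-1}(E))=a_m\lambda(E)$. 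Both arguments are ultimately the Bernoulli convolution structure of $\lambda$ under $L_\alpha$, and the two conditionings give the same sum by symmetry of the convolution. The paper's version is slightly more self-contained (it needs only the measure of a cylinder), while yours makes the dynamical renewal structure more transparent and sits naturally alongside the identity $\mathcal{L}^{(\alpha)}_n=F_\alpha^{-(n-1)}(A_1)$ and the $L_\alpha$-invariance of $\lambda$ established in Lemma \ref{Lexact}. Your handling of the two boundary issues --- the $m=n$ term via the convention $\mathcal{L}^{(\alpha)}_0=\mathcal{U}$, and the countable null set of $\alpha$-rationals where expansions terminate or cylinders overlap at endpoints --- is exactly what is needed.
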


\begin{proof} Since $\lambda(\mathcal{L}^{(\alpha)}_0)= 1$ and
$\lambda(\mathcal{L}^{(\alpha)}_1)= a_{1}$,
  the assertion clearly holds for $n=1$. For $n\geq 2$, the following calculation finishes the proof.
\begin{eqnarray*}\lambda(\mathcal{L}^{(\alpha)}_{n})&=&\lambda(C_{\alpha}(
n ))+\sum_{m=1}^{n-1}\sum_{\genfrac{}{}{0pt}{1}{C_{\alpha}(\ell_1, \ldots,
\ell_{k},
m)\in\mathcal{L}^{(\alpha)}_{n}}{k \in \N}}\lambda(C_{\alpha}(\ell_1,
\ldots, \ell_{k}, m))\\&=&
\lambda(C_{\alpha}(n ))+\sum_{m=1}^{n-1}a_m\sum_{\genfrac{}{}{0pt}{1}{C_{\alpha}(
\ell_1, \ldots,
\ell_{k})\in\mathcal{L}^{(\alpha)}_{n-m}}{k \in \N}}\lambda(C_{\alpha}(
\ell_1, \ldots, \ell_{k}))\\&=&
a_n\lambda(\mathcal{L}^{(\alpha)}_{0})+\sum_{m=1}^{n-1}a_m\lambda(\mathcal{L}^{(\alpha)}_{n-m})=\sum_{m=1}^{n}a_m\lambda(\mathcal{L}^{(\alpha)}_{n-m}).
\end{eqnarray*}
\end{proof}

We are now in the position to give the proof of Theorem \ref{renewal}.

\begin{proof}[Proof of  Theorem \ref{renewal} (1)]
Let us begin with by recalling the statement of the
standard discrete {\em
Renewal Theorem} by Erd{\H o}s, Pollard and Feller (\cite{EPF}).
This theorem considers an infinite probability vector $(v_n)_{n \in
\N}$,
that is,  a sequence of
non-negative real numbers for which $\sum_{k=1}^{\infty} v_{n}=1$.
Associated to this vector,  there exists a sequence $(w_n)_{n\in \N_{0}}$
such that
 $w_0=1$ and such that $(w_{n})$ satisfies the {\em renewal equation} $
w_n=\sum_{m=1}^n v_m w_{n-m}$, for all $n\in\N$. A pair
$((v_{n}),(w_{n}))$ of sequences with these properties will be
referred to as a \emph{renewal pair}.
A simple inductive argument immediately yields that $0 \leq
w_{n}\leq 1$, for all $n \in \N_{0}$. It was shown in \cite{EPF}
that with these hypotheses one then has
that
\[
\lim_{n\to \infty} w_n= \frac{1}{\sum_{m=1}^\infty m\cdot v_m},
\]
where the limit is  equal to zero if the series in the denominator
diverges.

This general form of the discrete renewal theorem  can now
be  applied directly to our specific
situation, namely, the sequence
of the Lebesgue measures of the $\alpha$-sum-level sets.
For this, fix
some partition $\alpha=\{A_{n}:n \in \N\}$, and set
$v_n: = \lambda(A_{n}) =a_n$, for each $n\in\N$. Notice that this is
certainly a probability vector. Then,  put
$w_n:=\lambda(\mathcal{L}^{(\alpha)}_n)$,
for each $n\in \N_{0}$.  In light of Lemma \ref{renewaltype} and the observation that
$w_{0}=\lambda(\mathcal{L}^{(\alpha)}_0)=1$, we then have that
these particular sequences $(v_{n})$ and $(w_{n})$
form indeed a renewal pair.
Consequently, by also observing that $\sum_{k=1}^n k a_{k}\sim
\sum_{k=1}^n t_{k}$, an application of the discrete renewal
theorem immediately implies
that
\[
\lim_{n\to\infty}\lambda(\mathcal{L}^{(\alpha)}_n)=\left( \sum_{k=1}^\infty
t_{k} \right)^{-1},
\]
where this limit is equal to zero if
$\sum_{k=1}^\infty t_{k}$ diverges. Note that, by Lemma \ref{density},
the divergence of the latter series is equivalent to the statement
that  $\alpha$ is of infinite
type.

For the remaining assertion in (1),  let us consider the two generating
functions $a$ and $\ell$, which are given by $a(s):=
\sum_{n=1}^{\infty} a_{n} s^{n}$ and $\ell(s) :=
\sum_{m=0}^{\infty} \lambda(\mathcal{L}^{(\alpha)}_m) s^{m}$.
Using Lemma \ref{renewaltype} and the fact that $\lambda(\mathcal{L}^{(\alpha)}_0)=1$, one  immediately verifies
that for $s \in (0,1)$ we have that $\ell(s)-1=\ell(s) a(s)$, and
hence,  $\ell(s) =1/(1-a(s))$. Since
$a(1)=1$, this gives that $\lim_{s \nearrow 1} \ell(s) =\infty$, which
shows that $\sum_{n=0}^{\infty} \lambda(\mathcal{L}^{(\alpha)}_n)$
diverges.
This finishes the proof of Theorem \ref{renewal} (1).
\end{proof}

\begin{proof}[Proof of  Theorem \ref{renewal} (2)  {(i), (ii)} and
Remark 1]
The statements concerning partitions $\alpha$  of finite type follow easily from part (1).
Similarly to the proof of part (1), the remainder of the proof here follows
from applications of
some general results from renewal theory to the particular
situation of the $\alpha$-sum-level sets.  In order to recall
these results, let $((v_{n})_{n \in \N}, (w_{n})_{n \in \N_{0}})$ be a
given renewal pair, and let the two associated sequences
$(V_{n})_{n\in \N}$ and $(W_{n})_{n\in \N}$ be defined by
$V_{n}:= \sum_{k=n}^{\infty} v_{k}$ and $W_{n}:=\sum_{k=1}^{n}w_{k}$, for
all $n \in \N$. (Note that $\sum_{k=1}^{n}V_{k} \sim
\sum_{k=1}^{n} k v_{k}$.) Let us now first recall the  following
strong renewal theorems obtained by Erickson, Garsia and
Lamperti.  The principle assumption in these results is that
\[V_{n} = \psi(n) n^{-\theta},\]
for all $n \in \N$, for some
 $\theta \in [0,1]$ and for some
 slowly varying function $\psi$.

\noindent {\bf The strong renewal results by Garsia/Lamperti \cite[Lemma 2.3.1]{GL} and
Erickson
 \cite[Theorem 5]{Erickson}}. {\it
 For $\theta \in [0,1]$, we have that
 \[ W_{n}
 \sim (\Gamma(2-\theta) \Gamma(1+\theta))^{-1} \, \cdot \,
 n  \, \cdot  \, \left(\sum_{k=1}^{n} V_{k}\right)^{-1}.\]
 Also, if $ \theta \in (1/2,1]$, then
 \[ w_{n} \sim (\Gamma(2-\theta) \Gamma(\theta))^{-1} \,
 \, \cdot \, \left(\sum_{k=1}^{n} V_{k}\right)^{-1}.\]
 Finally,   for  $\theta \in (0,1/2]$ we have that the limit in the
 latter formula does not have to exist in general. However, in this case we haved that (\cite[Theorem 1.1]{GL})
   \[ \liminf_{n\to \infty}  \,  n \cdot  w_{n} \cdot V_{n} =
   \frac{\sin \pi \theta}{\pi} ,\]
 and that
 if we restrict the index set to the complement of some set of integers of
zero  density, we may replace  the limes inferior by a limit in this equation.
}

 The statements in Theorem \ref{renewal} (2) (i), (ii) and
Remark 1 now follow from  straightforward applications
of these strong renewal results
to the setting of the $\alpha$-sum-level sets, for some given
partition $\alpha$.
For this we  have to put $v_{n}:=a_{n}$, $V_{n}:=t_{n}$ and
$w_{n}:=
\lambda(\mathcal{L}^{(\alpha)}_n)$, and to recall that  the pair $((a_{n})_{n \in
\N}, (\lambda(\mathcal{L}^{(\alpha)}_n))_{n \in \N_{0}})$ satisfies
the conditions of a
renewal pair.
\end{proof}
\proc{Remark.}
Note that, by combining the fact that $\mathcal{L}^{(\alpha)}_{n}
= F_{\alpha}^{-(n-1)} (\mathcal{L}^{(\alpha)}_{1})$ and Lin's criterion for
exactness, as stated at the end of the previous section, one immediately verifies
    that  if $\alpha$ is of infinite type, then
    $\lim_{n\to\infty}\lambda(\mathcal{L}^{(\alpha)}_{n})=0$. Clearly, this
 gives an alternative proof of the first part of Theorem \ref{renewal} (1)
 for the case in which $\alpha$  is of infinite type.
    \medbreak

\section{Multifractal Formalisms for $F_{\alpha}$
and $L_{\alpha}$}\label{sec:multi}

For the proofs of Theorem \ref{multi1} and Theorem \ref{multi2}, we employ the following  general thermodynamical
result obtained by Jaerisch and Kesseb\"ohmer, slightly adapted to fit our particular situation.

\textbf{The general thermodynamical results by Jaerisch and Kesseb\"ohmer (\cite{JaerischKess09})}.
 {\em Let $\alpha$ be given as in the introduction and
consider the two potential functions $\phi,\psi:\mathcal{U}\to\R$
given for $x\in A_n$, $n\in\N$, by $\phi\left(x\right):=\log a_{n}$
and $\psi\left(x\right):=z_{n}$, for some fixed sequence
$(z_n)_{n\in\N}$ of negative real numbers.
For all $s \in \R$ we then have that
\[
\dim_{H}\left\{ x\in\mathcal{U}:\lim_{n\to\infty}\left(\sum_{k=0}^{n-1}
\psi(L_{\alpha}^{k}(x))\right)/\left(\sum_{k=0}^{n-1}
\phi(L_{\alpha}^{k}(x))\right)=s\right\}
\leq \max\{0, -t^{*}\left(-s\right)\}.\]
Here, the function $t:\R\to\R\cup\left\{ \infty\right\} $ is given
by
\[
t\left(v\right):=\inf\left\{
u\in\R:\sum_{n=1}^{\infty}a_{n}^{u}\exp
\left(vz_{n}\right)\leq 1\right\}
\label{eq:FreeEnergy}\]
and $t^{*}$ is the Legendre transform of $t$, that is,  \[
t^{*}\left(r\right):=\sup_{v\in\R}\left(-t\left(v\right)+v r\right).\]
Furthermore,
there exist $r_-, r_+ \in \R$ such that for
$s\in\left(r_{-},r_{+}\right)$, we have
\[
\dim_{H}\left\{ x\in\mathcal{U}:\lim_{n\to\infty}\left(\sum_{k=0}^{n-1}
\psi(L_{\alpha}^{k}(x))\right)/\left(\sum_{k=0}^{n-1}
\phi(L_{\alpha}^{k}(x))\right)=s\right\}
=-t^{*}\left(-s\right). \]
In fact, the boundary points $r_{-}$ and $r_{+}$  are determined explicitly by \[
r_{-}:=\inf\left\{ -t^{+}\left(v\right):v\in\Int\left(\dom
\left(t\right)\right)\right\} \mbox{ and }  r_{+}:=\sup\left\{ -t^{+}\left(v\right):v\in\Int
\left(\dom\left(t\right)\right)\right\} ,\]
where $t^{+}$ denotes the derivative of $t$ from
the right, $\Int\left(A\right)$ denotes the
interior of the set $A$, and $\dom\left(t\right):=\left\{ v\in\R:t\left(v\right)<+\infty\right\} $
refers to the effective domain of $t$. }

\proc{Remark 3.} Note that for $s\in \R $ we have
    \[\left\{ x\in\mathcal{U}:\lim_{n\to\infty}\left(\sum_{k=0}^{n-1}
\psi(L_{\alpha}^{k}(x))/\sum_{k=0}^{n-1}
\phi(L_{\alpha}^{k}(x))\right)=s\right\} \neq \emptyset \]
 if and only if  $\inf\{ z_{n}/\log a_{n}: n \in \N\}
    \leq s \leq \sup\{ z_{n}/\log a_{n}: n \in \N\}.$
By basic properties of the Legendre transform it follows that
    \[ r_{-}  \geq \inf\{ z_{n}/\log a_{n}: n \in \N\} \; \mbox{
    and }  \; r_{+}\leq
    \sup\{ z_{n}/\log a_{n}: n \in \N\}.\]
       \medbreak

In preparation for the proof of Theorems \ref{multi1} and
\ref{multi2}, let  us  also
make the following observation.

\begin{lem}\label{eq:asympExpansive}
    Let $\alpha$ be a partition such that $\lim_{n\to
    \infty} t_{n}/t_{n+1}=\rho\geq1$ and such that $\alpha$ is either
    expanding, or expansive of exponent $\theta$  and eventually decreasing.
    We then have that the following
    hold.
    \begin{enumerate}
    \item We have that\[
    \lim_{n\to\infty}\frac{\log a_{n}}{n}=\lim_{n\to\infty}\frac{\log t_{n}}{n}=-\log \rho.\]
    Furthermore, if $\alpha$  is  expansive of exponent
    $\theta>0$ and eventually decreasing, then we
    have that
    \[
    a_{n}\sim\theta n^{-1} t_n.\]
    \item  If $\alpha$  is expanding or  expansive of exponent
    $\theta>0$ and eventually decreasing, then
    \[\lim_{n\to \infty}\frac{a_{n}}{a_{n+1}}=\rho\]
    \item There exists a sequence $(\epsilon_{k})_{k\in \N}$, with $\lim_{k\to
    \infty} \epsilon_{k}=0$, such that for all $n \in \N$ and $x\in
    \bigcup_{k\geq n} A_{k}$ we have that
    \[ \left|
    \frac{1}{n}\sum_{k=0}^{n-1}\log\left|F_{\alpha}'(F_{\alpha}^{k}(x))\right| - \log \rho \right| < \epsilon_{n}.\]
    \end{enumerate}
\end{lem}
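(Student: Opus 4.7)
Parts (1) and (2) are asymptotic assertions about the sequences $(a_n)$ and $(t_n)$ themselves, while (3) is a dynamical consequence of those asymptotics together with the piecewise affine structure of $F_\alpha$. I will treat them in that order.

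\emph{Part (1).} For the first equality $\lim_n (\log t_n)/n=-\log\rho$, I apply Stolz--Cesaro to the increments $\log t_{n+1}-\log t_n=-\log(t_n/t_{n+1})\to-\log\rho$. For the analogous statement for $a_n$ I split into cases. In the expanding case $\rho>1$, I factor $a_n=t_n\bigl(1-t_{n+1}/t_n\bigr)$; since $t_{n+1}/t_n\to1/\rho<1$ the factor in parentheses is bounded away from $0$ and $1$, so $\log a_n=\log t_n+O(1)$ and the conclusion follows. In the expansive case $\rho=1$ the upper bound $\log a_n\leq\log t_n$ is trivial, while the matching lower bound $(\log a_n)/n\to 0$ follows because any exponential decay of $a_n$ would force exponential decay of $t_n=\sum_{k\geq n}a_k$, contradicting the slow variation of $\psi$ in $t_n=\psi(n)n^{-\theta}$. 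For the sharper refinement $a_n\sim\theta\, t_n/n$ in the expansive case with $\theta>0$ and $a_n$ eventually monotone, I invoke the Monotone Density Theorem from regular variation (the discrete Karamata-type Tauberian theorem): the tail $t_n$ is regularly varying of index $-\theta$, and monotonicity of $a_n$ upgrades the Abelian statement to the asymptotic equivalence of the summand with $\theta n^{-1}t_n$. This invocation is the principal technical ingredient of part (1).

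\emph{Part (2).} In the expanding case I use
\[
\frac{a_n}{a_{n+1}}=\frac{t_n-t_{n+1}}{t_{n+1}-t_{n+2}}=\frac{t_n/t_{n+1}-1}{1-t_{n+2}/t_{n+1}},
\]
and pass to the limit to obtain $(\rho-1)/(1-\rho^{-1})=\rho$. In the expansive case with $\theta>0$ and $a_n$ eventually decreasing, the refinement from part (1) gives $a_n\sim\theta t_n/n$, so $a_n/a_{n+1}\sim (n+1)t_n/(n\,t_{n+1})\to 1=\rho$.

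\emph{Part (3).} The key observation is that $F_\alpha$ maps $A_m$ affinely onto $A_{m-1}$ with constant slope $a_{m-1}/a_m$ for every $m\geq 2$, and maps $A_1$ affinely onto $\U$ with slope $-1/a_1$. Consequently, for $x\in A_m$ with $m\geq n$ the iterates $F_\alpha^{\,j}(x)$ for $j=0,\dots,n-1$ run successively through $A_m,A_{m-1},\dots,A_{m-n+1}$, and the ergodic sum $\sum_{j=0}^{n-1}\log|F_\alpha'(F_\alpha^{\,j}(x))|$ telescopes: it equals $\log(a_{m-n}/a_m)$ when $m>n$ and $-\log a_n$ when $m=n$. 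Dividing by $n$, the case $m=n$ is dispatched by part (1). For $m>n$ the quantity is the Ces\`aro average over $n$ consecutive terms of the sequence $b_k:=\log(a_{k-1}/a_k)$ starting at index $m-n+1$, and part (2) gives $b_k\to\log\rho$. The main obstacle here is to obtain a deviation bound that is uniform in the starting index $m\geq n$: I will fix $N$ with $|b_k-\log\rho|<\eta$ for all $k>N$, bound the contribution of the at most $N$ exceptional indices to the average by $(N/n)\max_{k\leq N}|b_k-\log\rho|$, and then take first $N\to\infty$ and then $n\to\infty$. Setting $\epsilon_n$ equal to the supremum, over $m\geq n$, of the resulting deviation yields a null sequence with the stated property.
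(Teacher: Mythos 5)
Your proposal follows essentially the same route as the paper: Ces\`aro/Stolz for $(\log t_n)/n$, the Monotone Density Theorem for $a_n\sim\theta n^{-1}t_n$, the identical limit computation for $a_n/a_{n+1}$ in the expanding case, and the same telescoping of the ergodic sum over consecutive atoms in part (3). The only differences are cosmetic: the paper obtains $(\log a_n)/n\to 0$ in the expansive case directly from the Monotone Density refinement rather than from your (somewhat loose) ``no exponential decay'' remark, and it controls the part (3) error by the single supremum $\sup_k\left|(\log a_k-\log a_{n+k})/n-\log\rho\right|$ instead of your two-block Ces\`aro estimate routed through part (2) --- your version is, if anything, the more carefully justified of the two.
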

\begin{proof} Let us first prove the assertion in (1).
Since $\lim_{n\to \infty}(\log t_{n}-\log t_{n+1})=\log\rho$, we conclude, by using Ces\`{a}ro averages,
that for  $\rho\geq1$ we have that
\[
\lim_{n \to \infty}\frac{\log t_{n}}{n}=\lim_{n\to \infty}
\frac{1}{n}\left(\log t_{1}+\sum_{k=1}^{n-1} \left(\log t_{k+1}-
\log t_{k}\right)\right)=-\log \rho.
\]
Since $t_{n}-t_{n+1}=a_{n}\leq t_{n}$, this in particular also gives
the first equality  in (1)
for $\rho>1$.
The second statement in (1) follows from the Monotone Density
Theorem (\cite{BinghamGoldieTeugels:89}, Theorem 1.7.2). Clearly,
this in particular also implies that $\lim_{n\to \infty}(\log
a_{n})/n= 0$ for the case $\rho=1$. This completes the proof of the statement in (1).

The proof of (2) for the expansive case is an immediate consequence
of (1),  whereas for the expanding case  the assertion in (2) is an
immediate consequence of the following observation:  \[
\lim_{n\to \infty}\frac{a_{n}}{a_{n+1}}=\lim_{n\to \infty}\frac{t_{n-1}-t_n}{t_n-t_{n+1}}=\frac{\rho-1}{1-1/\rho}=\rho.
\]
For the proof of (3), observe that
\begin{eqnarray*}
    \left|
\frac{\sum_{k=0}^{n-1}\log\left|F_{\alpha}'(F_{\alpha}^{k}(x))\right|}{n} - \log \rho \right|
&\leq & \sup_{k \in \N} \left|\frac{\log a_{k} -\log a_{n+k}}{n} -\log
\rho\right| \\
& =&
\sup_{k \in \N} \left|\frac{\log a_{k}}{k} \cdot  \frac{k}{n} - \frac{\log
a_{n+k}}{n+k}  \cdot \frac{n+k}{n}  -\log
\rho\right| \\
& =:& \epsilon_{n}.\end{eqnarray*}
Since  by (1) we have $\lim_{k\to\infty}(\log  a_{k})/k=-\log\rho$,
it follows that $\lim_{k\to \infty} \epsilon_{k}=0$.
\end{proof}

We are now in the position to prove Theorem \ref{multi1}.

\begin{proof}[Proof of Theorem \ref{multi1}] We apply the general
result by Jaerisch and Kesseb\"ohmer, as stated above, to the special situation in which
$z_{n}:=-1$, for each $n\in\N$. In order to determine the function
$t$, we consider the function $v:(t_\infty, \infty)\to\R$,
which is  given by $v(u):=\log\sum_{n=1}^{\infty}a_{n}^{u}$,
where  $t_{\infty}:=\inf \{r>0: \sum_{k=1}^{\infty} a_{k}^{r} <
\infty\}$.
 On the one hand, if $v\left(t_{\infty}\right)$ is infinite, then
the free energy function $t$  appearing in the result of Jaerisch and
Kesseb\"ohmer is identically equal to the inverse  $v^{-1}$ of $v$.
On the other hand, if $v\left(t_{\infty}\right)=:c<\infty$,
then $t(s)=v^{-1}(s)$ for all $s \in (-\infty,c)$, whereas $t(s)=t_{\infty}$
for all $s\in [c,+\infty)$.
In both cases, one immediately finds, by considering the asymptotic
slopes of $t$,  that
$r_{-}=0$ and $r_{+}= 1/ \inf\{-\log a_{n} : n \in \N\}$.
Hence, using Remark 3 and the general thermodynamical
result stated above, it follows that the boundary
points  of the non-trivial part
of  the
Lyapunov spectrum  associated with the map $L_{\alpha}$  are
determined by $t_{-}:=1/r_{+} =\inf\{-\log a_{n}:n \in \N\}$ and
$t_{+}:= +\infty$
(where the latter follows, since here we have that $r_{-}=0$).

For both of these two cases, this
shows that the Hausdorff dimension function associated with the Lyapunov
spectrum of $L_\alpha$ is  given,
for $s\in(t_{-},+\infty)$, by\begin{eqnarray*}
\tau_{\alpha}(s) & = & -t^{*}\left(-1/s\right)=
\inf_{v\in\R}\left(t\left(v\right)+s^{-1}v\right)=
\inf_{u\in\R}\left(u+s^{-1}\log\sum_{n=1}^{\infty}a_{n}^{u}
\right)
\end{eqnarray*} and $\tau_{\alpha}(s)$ vanishes for $s<t_-$.

For the discussion of the phase transition phenomena for
$L_{\alpha}$, one immediately verifies that for the right derivative of
the pressure function $p$ of $L_{\alpha}$, where the reader
might like to recall  that $p$ is given by
$p(u):=\log\sum_{n=1}^{\infty} a_{n}^{u}$, we have that
\[ p'(u)= \frac{\sum_{n=1}^{\infty} a_{n}^{u} \log
a_{n}}{\sum_{n=1}^{\infty} a_{n}^{u}}.\]
Clearly, $p$ is real-analytic on $(t_{\infty},\infty)$.
Hence, we have that $L_{\alpha}$ exhibits no phase transition
if and only if
$\lim_{u\searrow t_{\infty}} -p'(u) = + \infty$.  We now distinguish
the following two cases.

If $\alpha$ is expanding, then there is no phase transition.
    This follows, since, by Lemma \ref{eq:asympExpansive}, we have that
    $p(u)<\infty$, for all $u>0$. In particular, $t_{\infty}=0$.

    If $\alpha$ is expansive of exponent $\theta>0$  such that $t_{n}=\psi(n)
    n^{-\theta}$,
    then Lemma \ref{eq:asympExpansive} implies that there exists
    $\psi_{0}$ such that $\psi_{0}(n) \sim \theta \psi(n)$ and
    $a_{n}= \psi_{0}(n) n^{-(1+\theta)}$. Consequently, we have that
   $t_{\infty}= 1/(1+\theta)$. Hence, we now observe that
  \[\lim_{u\searrow t_{\infty}} -p'(u) =
(1+\theta)   \lim_{u\searrow t_{\infty}}  \frac{\sum_{n=1}^{\infty}
\left( n^{-(1+\theta)} \psi_{0}(n)\right)^{u} \log \left(n
(\psi_{0}(n))^{-1/(1+\theta)}\right)}{\sum_{n=1}^{\infty}
\left( n^{-(1+\theta)} \psi_{0}(n)\right)^{u} }.
\]For $\theta=0$, this shows that $\lim_{u\searrow t_{\infty}}p'(u)=\infty$ if and only if $-\sum_{n=1}^\infty a_n\log(a_n)=\infty$.
We now split the discussion as follows. Firstly, if
$\sum_{n=1}^{\infty} \psi(n)^{1/(1+\theta)} (\log n)/n$
converges, then, clearly,  in the
latter expression the numerator and the denominator
 both converge, and hence, $\lim_{u\searrow t_{\infty}} -p'(u)$ is
finite, showing that in this case the system exhibits a phase
transition. Secondly, if
$\sum_{n=1}^{\infty} \psi(n)^{1/(1+\theta)} (\log n)/n$
diverges, then we have to consider the following two sub-cases.
If
$\sum_{n=1}^{\infty}
 n^{-1} \psi_{0}(n)^{1/(1+\theta)}$ converges, then $\lim_{u\searrow
 t_{\infty}} -p'(u) = \infty$. On the other hand, if
$\sum_{n=1}^{\infty}
 n^{-1} \psi_{0}(n)^{1/(1+\theta)}$ diverges, then for every $k\in \N$ we have
$(k^{-(1+\theta)} \psi_{0}(k))^{u}/ \sum_{n=1}^{\infty}
( n^{-(1+\theta)} \psi_{0}(n))^{u}\to 0$ as $u\to 1/(1+\theta)$ and hence we have, that
$\lim_{u\searrow
 t_{\infty}} -p'(u) = \infty$.
 Therefore, in both of these sub-cases
 the  system exhibits no phase
transition.

Finally, for the interpretation of $t_\infty$ in terms of the Hausdorff dimension of the Good-type set $G_\infty ^{(\alpha)}$,
we have shown above that $t_\infty=1/(1+\theta)$ for $\alpha$ expansive of exponent $\theta>0$ and $t_\infty=0$ for $\alpha$ expanding. It has been proved in \cite{SM} that for $\alpha$ expansive of exponent $\theta>0$ we have $\dim_H(G_\infty^{(\alpha)})=1/(1+\theta)$. It is clear, by considering coverings of $G_\infty^{(\alpha)}$ by cylinder sets,  that in the case of $\alpha$ expanding, we have that  $\dim_H(G_\infty^{(\alpha)})=0$.
This finishes the proof of Theorem \ref{multi1}.
\end{proof}

In the proof of Theorem \ref{multi2},
 the following proposition will be useful. In this proposition, we consider the
 potential function
$N:\U\to\N\cup\{\infty\}$, which is given by
\[
N(x):=\left\{
        \begin{array}{ll}
          n & \hbox{for $x\in A_n$, for $n\in \N$ ;} \\
          \infty & \hbox{for $x=0$.}
        \end{array}
      \right.
\]

\begin{prop}\label{eq:prop}
Let $\alpha$ be a   partition which is either expanding, or expansive
of exponent $\theta$ and eventually decreasing.
With $$\Pi(L_{\alpha},x):= \lim_{n\to \infty}
\left(\sum_{k=0}^{n-1}\log\left|L_{\alpha}'(L_{\alpha}^{k}(x))\right|\right)/
\left(\sum_{k=0}^{n-1}N(L_{\alpha}^{k}(x))\right), $$
we then have for each $s\geq0$ that the sets
\[ \left\{ x\in\mathcal{U}:\Pi(L_{\alpha},x)=s\right\} \hbox{ and }
\left\{ x\in\mathcal{U}:\Lambda(F_{\alpha},x)=s\right\} \]
 coincide up to a countable set of points. \end{prop}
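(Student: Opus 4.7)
The plan is to exploit the jump-transformation identity $L_\alpha=F_\alpha^*$ from Lemma \ref{lem:2.1}. For an $\alpha$-irrational $x=[\ell_1,\ell_2,\ldots]_\alpha$, set $T_n(x):=\sum_{k=1}^n\ell_k$, so that $L_\alpha^n(x)=F_\alpha^{T_n(x)}(x)$. Writing $S_n^L(x)$ and $S_m^F(x)$ for the respective Birkhoff sums of $\log|L_\alpha'|$ and $\log|F_\alpha'|$, the chain rule combined with the fact that $F_\alpha$ carries points successively through $A_{\ell_{n+1}},A_{\ell_{n+1}-1},\ldots,A_1$ in $\ell_{n+1}$ consecutive steps yields the telescoping identity
\[
S_n^L(x)=-\sum_{k=1}^n\log a_{\ell_k}=S_{T_n(x)}^F(x),
\]
together with $\sum_{k=0}^{n-1}N(L_\alpha^k(x))=T_n(x)$. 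Hence $\Pi(L_\alpha,x)$ coincides with the subsequential limit $\lim_n S_{T_n(x)}^F(x)/T_n(x)$. Since $T_n(x)\to\infty$ for every $\alpha$-irrational $x$, this immediately gives the forward inclusion: $\Lambda(F_\alpha,x)=s$ implies $\Pi(L_\alpha,x)=s$. The natural countable exceptional set is that of the $\alpha$-rationals, whose orbits terminate at the fixed point $0$.

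For the reverse inclusion, assume $\Pi(L_\alpha,x)=s$. For $T_n\leq m<T_{n+1}$ write $m=T_n+r$ with $0\leq r<\ell_{n+1}$; further telescoping along the same $F_\alpha$-block gives
\[
S_m^F(x)=S_n^L(x)+\log a_{\ell_{n+1}-r}-\log a_{\ell_{n+1}}\qquad(\text{with the convention }a_0:=1).
\]
By Lemma \ref{eq:asympExpansive}(2), $|F_\alpha'|\geq 1$ on all but finitely many atoms, so $m\mapsto S_m^F(x)$ is eventually non-decreasing in $m$. Combined with the monotonicity of the denominator, this produces the sandwich
\[
\frac{S_n^L(x)}{T_{n+1}(x)}\;\leq\;\frac{S_m^F(x)}{m}\;\leq\;\frac{S_{n+1}^L(x)}{T_n(x)}.
\]

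The main obstacle is to show that both sandwich bounds tend to $s$, which is equivalent to $\ell_{n+1}(x)/T_n(x)\to 0$. Using the asymptotics $-\log a_k\sim k\log\rho$ in the expanding case and $-\log a_k\sim(1+\theta)\log k=o(k)$ in the expansive case, supplied by Lemma \ref{eq:asympExpansive}(1), and inserting $S_{n+1}^L-S_n^L=-\log a_{\ell_{n+1}}$ and $T_{n+1}-T_n=\ell_{n+1}$ into the limits $S_n^L/T_n\to s$, $S_{n+1}^L/T_{n+1}\to s$, one sees that along any subsequence with $\ell_{n+1}/T_n\geq\delta>0$ one must have $(s-\log\rho)\ell_{n+1}=o(\ell_{n+1})$ in the expanding case, and $s\ell_{n+1}=o(\ell_{n+1})$ in the expansive case. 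Thus $\ell_{n+1}/T_n\to 0$ except in the borderline values $s=\log\rho$ (expanding) or $s=0$ (expansive), which are handled directly from the explicit block formula for $S_m^F(x)$: for $s=\log\rho$, the estimate $\log a_{\ell_{n+1}-r}-\log a_{\ell_{n+1}}\sim r\log\rho$ forces $S_m^F(x)\sim m\log\rho$ uniformly in $r$; for $s=0$, the partial-block contribution is $O(\log\ell_{n+1})$, and a routine split into $r\leq\ell_{n+1}/2$ and $r>\ell_{n+1}/2$ yields $S_m^F(x)/m\to 0$. In every case one concludes $\Lambda(F_\alpha,x)=s$, completing the set equality modulo the $\alpha$-rationals.
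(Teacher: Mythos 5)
Your argument follows essentially the same route as the paper's proof: the forward inclusion via the observation that $S^L_n/T_n$ is the subsequence $S^F_{T_n}/T_n$ of the $F_\alpha$-Birkhoff averages, and the reverse inclusion via the block decomposition of $S^F_m$ together with a contradiction argument showing $\ell_{n+1}(x)/T_n(x)\to 0$ whenever $s\neq\log\rho$, with the borderline value $s=\log\rho$ handled by the partial-block asymptotics (the paper packages this last step as Lemma \ref{eq:asympExpansive}(3) plus a convex-combination observation). Two small inaccuracies are worth noting. First, $m\mapsto S^F_m(x)$ is \emph{not} eventually non-decreasing: the $F_\alpha$-orbit re-enters the finitely many low-index atoms on which $|F_\alpha'|<1$ at the end of every block, so negative increments recur infinitely often; however, the total negative contribution within any single block is bounded by a fixed constant, so your sandwich holds up to an $O(1)$ additive error, which does not affect the limits. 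Second, your bound $O(\log\ell_{n+1})$ for the partial-block contribution in the case $s=0$ rests on $-\log a_k=O(\log k)$, which Lemma \ref{eq:asympExpansive} only supplies for exponent $\theta>0$ (via $a_n\sim\theta n^{-1}t_n$); for $\theta=0$ you should instead invoke $-\log a_k=o(k)$ from part (1) of that lemma, which still makes the split into $r\leq\ell_{n+1}/2$ and $r>\ell_{n+1}/2$ go through. With these repairs the proof is correct and is in substance the paper's own.
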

\begin{proof}
Set $S_{n}\left(x\right):=\sum_{k=0}^{n-1}\log\left|L_{\alpha}'(L_{\alpha}^{k}(x))\right|$,
$T_{n}\left(x\right):=\sum_{k=0}^{n-1}\log\left|F_{\alpha}'(F_{\alpha}^{k}(x))\right|$
and $N_{n}\left(x\right):=\sum_{k=0}^{n-1}N(L_{\alpha}^{k}(x))$. Since
$S_{n}\left(x\right)/N_{n}\left(x\right)$ is a subsequence of $T_{n}\left(x\right)/n$
it follows for all $s\geq 0$ that
\[
\left\{x\in\mathcal{U}:\Lambda(F_{\alpha},x)=s\right\} \subset\left\{
x\in\mathcal{U}:\lim_{n\to \infty}S_{n}\left(x\right)/N_{n}\left(x\right)=s\right\}.
\]
Since the set of preimages of $0$ under $F_{\alpha}$ is at most
countable, we can clearly restrict the discussion to those points $x\in\U$
for which $N\left(F_{\alpha}^{k}\left(x\right)\right)=:\ell_{k}\left(x\right)$
is finite for all $k\in\N$. Now put $k_{n}\left(x\right):=\sup\left\{ k\in\N:N_{k}\left(x\right)\leq n\right\} $
and $m_{n}\left(x\right):=n-N_{k_{n}(x)}\left(x\right)$, and assume
that $\Pi(L_{\alpha},x)=s$, for some $s \geq 0$. Thus,
$\lim_{n \to \infty} S_{k_{n}\left(x\right)}\left(x\right)/N_{k_{n}\left(x\right)}\left(x\right)
=s$, and a  straightforward computation gives that
\begin{eqnarray*}
\frac{T_{n}\left(x\right)}{n} & = &
\frac{S_{k_{n}\left(x\right)}\left(x\right)}{N_{k_{n}\left(x\right)}\left(x\right)+m_{n}
\left(x\right)}+\frac{
T_{m_{n}\left(x\right)}\left(L_{\alpha}^{k_{n}\left(x\right)}
\left(x\right)\right)}{N_{k_{n}\left(x\right)}\left(x\right)+m_{n}
\left(x\right)} \\ &=&
\frac{N_{k_{n}\left(x\right)}(x)}{N_{k_{n}\left(x\right)}\left(x\right)+m_{n}
\left(x\right)}  \cdot \frac{S_{k_{n}\left(x\right)}
\left(x\right)}{N_{k_{n}\left(x\right)}\left(x\right)}  +
\frac{m_{n}
\left(x\right)\left(\log \rho \pm
\epsilon_{m_{n}(x)}\right)}{N_{k_{n}
\left(x\right)}\left(x\right)+m_{n}
\left(x\right)}, \end{eqnarray*}
where $(\epsilon_{k})_{k\in \N}$ denotes the sequence which was obtained in Lemma
\ref{eq:asympExpansive} (3).
For the case  $s=\log \rho$  one immediately verifies, using the
observation that  the latter sum is a convex
combination, that  $\Lambda(F_{\alpha},x)= \log \rho$. Hence, we are left only
to consider the case  $s\neq \log \rho$. Given this assumption,
observe that
\begin{eqnarray*}
\frac{T_{n}\left(x\right)}{n} & = &
\frac{1}{ 1+m_{n}
\left(x\right) /  N_{k_{n}\left(x\right)}(x)}  \cdot \frac{S_{k_{n}\left(x\right)}
\left(x\right)}{N_{k_{n}\left(x\right)}\left(x\right)}  +
\frac{\log \rho \pm
\epsilon_{m_{n}(x)}}{1+N_{k_{n}
\left(x\right)}\left(x\right)/m_{n}
\left(x\right)}.\end{eqnarray*}
Hence, it remains to show that $\lim_{n\to \infty} m_{n}
\left(x\right) /  N_{k_{n}\left(x\right)}(x) =0$. For this we argue by
way of contradiction,
using the inequality $m_{n}(x) \leq \ell_{k_{n}(x)+1} (x)$, as follows.
Put $b_{k_{n}(x)}:=\log
a_{\ell_{k_{n}\left(x\right)}\left(x\right)}$, and  observe that
\begin{eqnarray*}
\lim_{k\to\infty}\frac{S_{k_{n}\left(x\right)+1}\left(x\right)}{N_{k_{n}\left(x\right)+1}\left(x\right)} & = & \lim_{k\to\infty}\frac{S_{k_{n}\left(x\right)}\left(x\right)+b_{k_{n}\left(x\right)+1}}{N_{k_{n}\left(x\right)}\left(x\right)+
\ell_{k_{n}\left(x\right)+1}(x)}\\
& =&  \lim_{k\to\infty}\frac{S_{k_{n}\left(x\right)}\left(x\right)\left(1+\frac{b_{k_{n}\left(x\right)+1}}{S_{k_{n}\left(x\right)}
\left(x\right)}\right)}{N_{k_{n}\left(x\right)}\left(x\right)\left(1+\frac{\ell_{k_{n}\left(x\right)+1}(x)}{N_{k_{n}\left(x\right)}
\left(x\right)}\right)}.\end{eqnarray*}
Now suppose, by way of contradiction, that
$\lim_{n\to\infty}\ell_{k_{n}\left(x\right)+1}\left(x\right)/N_{k_{n}\left(x\right)}\left(x\right)\neq0$.
Since $S_{k_{n}\left(x\right)}\left(x\right)$ is strictly increasing,
we then have that there exists a strictly increasing sequence of positive integers  $(n_j)_{j\in\N}$
such that
$\lim_{j\to\infty}b_{k_{n_j}\left(x\right)+1}\left(x\right)=\infty$.
This implies that
\[
\lim_{j\to \infty}\frac{b_{k_{n_j}\left(x\right)+1}}{\ell_{k_{n_j}\left(x\right)+1}(x)}=\log\rho.\]
 By combining this with the calculation above, we obtain that \[
1=\lim_{j\to\infty}\frac{b_{k_{n_j}\left(x\right)+1}N_{k_{n_j}\left(x\right)}\left(x\right)}
{\ell_{k_{n_j}\left(x\right)+1}(x)S_{k_{n_j}\left(x\right)}\left(x\right)}=\frac{\log\rho}{s}\neq 1,\]
which is a contradiction and hence  finishes the proof.
\end{proof}
\begin{proof}[Proof of Theorem \ref{multi2}]
Let us begin by showing that $-s_{-}$ and $-s_{+}$ are  the
asymptotic slopes of the $\alpha$-Farey free energy function $v$. This
follows, since for each $\epsilon>0$ and for $u>0$, resp. $u<0$, we
have
\[ \sum_{n=1}^{\infty} \exp\left(n u \left(\frac{\log a_{n}}{n} +
s_{\mp} \mp \epsilon
\right)\right)  \left\{
	\begin{array}
	 {l@{\, \, \, \, \, \hbox{for}\,\, }l}
	\leq  \sum_{n=1}^{\infty} \exp\left(\mp nu\epsilon \right)
	\longrightarrow 0  &  u   \longrightarrow\pm \infty\\  \geq
	\exp\left(\pm u\epsilon \right)  \longrightarrow +\infty &
	u \longrightarrow\pm \infty.
	\end{array} \right.\]
The next step is to examine the
possibility of the existence of phase transitions. For this, we introduce the function $Z$, given by
\[ Z(u,v) := \sum_{n=1}^{\infty} \exp\left(n \left(\frac{u\log a_{n}}{n} -
v\right)\right).\]
Let us again consider the expanding and the expansive case
separately.

If $\alpha$ is expanding, then we immediately have that $Z$ is real-analytic in both
variables $u$ and $v$, and also, that $Z$ is strictly decreasing in $v$.
Moreover, for each $u_{0} \in \R$ fixed, we have that
$\{Z(u_{0},v):v \in \R\} = (0,\infty)$. This implies that for
each $u \in \R$ there exists a unique $f(u) \in \R$ such that
$Z(u,f(u))=1$. An application of the Implicit Function Theorem then
gives that $f$ is real-analytic and coincides with the $\alpha$-Farey
free energy   function $v$.
 It follows that in the expanding case the system exhibits no phase
 transition.

 For $\alpha$ expansive and if $u$ is strictly less than $1$, we
 can argue similarly to the expanding case, which then gives the existence of a
 real-analytic function
 $f:(-\infty,1) \to \R^{+}$ such that $Z(u, f(u))=1$ and such
 that $f(u)=v(u)$, for all $u \in (-\infty,1)$. For $u>1$, one then
 immediately verifies that
 \[\sum_{n=1}^{\infty}  a_{n}^{u} \e^{-wn}
 \left\{
	 \begin{array}
	  {l@{\, \, \, \, \, \hbox{for}\,\, }l}
	<1  &  w\geq 0\\  = \infty
	  &
	  w< 0.
	 \end{array} \right.\]
It follows that $v(u)=0$, for $u\geq 1$, which then shows
that in this case the system exhibits a phase transition if and only if $\lim_{u \nearrow
1}  f'(u) <0$. In order to investigate the expansive situation in greater detail, note
that, using the Implicit Function Theorem again, an elementary
calculation gives that, for $u<1$,  we have that
\[  f'(u)=  \frac{\sum_{n=1}^{\infty}
a_{n}^{u} \e^{-f(u)n} \log a_{n}}{\sum_{n=1}^{\infty}
n a_{n}^{u} \e^{-f(u)n}} .
\]
For the case in which $\alpha$  is of infinite type, we have that
the denominator in the above expression tends to infinity, for $u$
tending to $1$ from below. Indeed, since for each $N\in \N$
and $u<1$, we have
\[  \sum_{n=1}^{\infty}   n \, a_{n}^{u} \e^{-f(u) n} \geq
\e^{-f(u) N} \sum_{n=1}^{N}   n \, a_{n} \to  \sum_{n=1}^{N}   n \, a_{n}
, \mbox{ for } u \nearrow 1.\]
Using the fact that $\lim_{n\to \infty} ((\log
a_{n})/n)=0$, it follows that
\[ \lim_{u \nearrow 1} f'(u)=  \lim_{u \nearrow 1}
\sum_{n=1}^{\infty}  \frac{\log a_{n}}{n}  \cdot
\frac{n a_{n}^{u} \e^{-f(u)n}}{\sum_{k=1}^{\infty}
k a_{k}^{u} \e^{-f(u)k}}  =0.
\]
Summarising these observations, we now have that if $\alpha$  is of infinite type
then the system exhibits no phase transition.

Hence, it only remains to consider the case
 in which $\alpha$  is of finite type. Here, the easiest situation to analyse
 occurs for $\theta >1$. Clearly, in this case we have that in the above
 expression for $f'$ the
 denominator and the numerator  both
converge
to a finite value not equal to zero,  for $u$  tending to $1$ from below. Therefore, in
this case the system exhibits a phase transition.

Finally, it remains to consider the case in which $\alpha$  is of finite type and $\theta =1$.
In fact,  the following argument  requires  only  that  $\sum_{n}na_{n}=\sum_{n}t_{n}<\infty$,
and  hence it will also give an alternative proof  for the case
$\theta > 1$.
For this, let $v_{N}:\R\to\R$ be given by \[
\sum_{n=1}^{N}a_{n}^{u}\e^{-v_{N}\left(u\right)n}=1.\]
It is easy to check that  $v_{N}$ is real-analytic and that it converges
pointwise to the $\alpha$-Farey
free energy function $v$.
Also,  define $\delta_{N}:= \sum_{n>  N} n a_{n} / \sum_{n\leq  N}
n a_{n}$  and observe that $\lim_{N\to \infty} \delta_{N}=0$.
 Using  the fact that $\e^{ax}\geq ax+1 $, for all $a,x\geq0$, we then
 have that
\[ \sum_{n\leq N} a_{n} \e^{\delta_{N} \cdot n/N}\geq
\sum_{n\leq N}a_{n}  +  \frac{\delta_{N}}{N} \sum_{n\leq N} n a_{n}
= \sum_{n\leq N}a_{n}  +  \frac{1}{N} \sum_{n>  N} n a_{n}
\geq \sum_{n=1}^{\infty}a_{n} =1.\]
Combining this with the definition of $v_N$, it follows   that $v_N(1)\geq
- \delta_N/N$
and
hence,
\[
\sum_{n=1}^{N}na_{n}\leq \sum_{n\leq N}na_{n}\e^{-v_{N}
\left(1\right)\cdot n}\leq\e^{-v_{N}\left(1\right)\cdot N}\sum_{n=1}^{\infty}na_{n}\leq \e^{\delta_N}\sum_{n=1}^{\infty}na_{n}.
\]
This gives that $$
\lim_{N \to \infty} v_N'(1)= \lim_{N \to \infty}\frac{\sum_{n}a_{n}\log a_{n} \e^{-v_{N}(1)n}}{
\sum_{n}na_{n}
\e^{-v_{N}(1)n}}=\frac{\sum_{n}a_{n}\log a_{n}}{\sum_{n}na_{n}}<0.$$
Since $v_{N} \leq f$ on $(-\infty, 1)$,  we have that $\lim_{u \nearrow  1}f'(u)\leq
\lim_{N\to \infty}v_{N}'(1)$. Combining these observations,  it  now
follows that
$\sum_{n}a_{n}\log a_{n}/\sum_{n}na_{n}$ is an upper bound
for $\lim_{u \nearrow  1}f'(u)$. The fact that this is also a
 lower bound is an  immediate consequence of  the  following
 calculation.
\[ \lim_{u \nearrow  1}\frac{\sum_{n=1}^{\infty}
a_{n}^{u} \e^{-f(u)n} \log a_{n}}{\sum_{n=1}^{\infty}
n a_{n}^{u} \e^{-f(u)n}} \geq \lim_{u \nearrow  1} \frac{\sum_{n=1}^{\infty}
a_{n}^{u}  \log a_{n}}{\sum_{n=1}^{\infty}
n a_{n} \e^{-f(u)n}} = \frac{\sum_{n}a_{n}\log a_{n}}{\sum_{n}na_{n}}. \]
This shows that also in this case
the system exhibits a phase transition.

In order to derive the description of $\sigma_{\alpha}$ in terms of
the $\alpha$-Farey free energy function, as stated in the theorem,
we  apply the above stated general result by Jaerisch and Kesseb\"ohmer to the special situation in which
 $z_{n}:=-n$, for all $n \in \N$.  This gives the Hausdorff dimension
 function associated with $\left\{ x\in\mathcal{U}:\Pi(L_{\alpha},x)=s\right\} $, which, by
Proposition \ref{eq:prop}, coincides with the Hausdorff dimension function of
the Lyapunov spectrum associated with $F_{\alpha}$.
Let us now distinguish two cases, the first
in which the Farey-system exhibits no phase
transition and the second in which it has a phase transition.
In the first case, the boundary points
of the  spectral set  are given by
$s_-=1/r_+$ and $s_{+}=1/r_{-}$. This can be shown in a  similar fashion to the
$\alpha$-L\"uroth case, by observing that $t$ coincides  with the inverse
$v^{-1}$ of the $\alpha$-Farey free energy function $v$.
More precisely,
for $\alpha$  expanding this holds on $\R$, whereas if $\alpha$ is
expansive then this is true on $[0,\infty)$ (and in this case $t(v)
=\infty$ for all $v \in (-\infty,0)$).
Moreover, if $\alpha$ is expansive and exhibits no phase transition,
then
we have that $s_{-}=0$ and $r_+=\infty$.
 Therefore,
it follows that
$\sigma_{\alpha}\left(s\right)=-t^{*}\left(-1/s\right)$, for all
$s\in(s_{-},s_{+})$. This gives the proof of the first part  of Theorem
\ref{multi2} for the case in which there is no phase transition.

Finally, if there exists a phase transition  then, by the above,
we necessarily have that $\alpha$ is expansive and
$r_{+}=-\left(\sum_{n}na_{n}\right)/\left(\sum_{n}a_{n}\log
a_{n}\right)<\infty$, showing that $0=s_{-}< 1/r_{+}$. By the general result  of
Jaerisch and Kesseb\"ohmer,
  the dimension formula stated in the theorem then  holds for all
$s\in ( 1/r_{+}, s_{+})$. For $s \in (0, 1/r_{+}]$ we have that
$-t^{*}\left(-1/s\right)=1$, which immediately gives the
 upper bound $1$ for
$\sigma_{\alpha}(s)$,  for all $s \in (0,1/r_{+}]$. The fact that $1$
is also the  lower bound
on $(0, 1/r_{+}]$   is an immediate
consequence of \cite{JaerischKess09}, Corollary 1.9 (3) (Exhaustion Principle II) (see also Example 1.13 in  \cite{JaerischKess09}). This finishes the proof of Theorem \ref{multi2}.
\end{proof}

\section{Some Examples}\label{sec:lueroth}

As mentioned already in the introduction, if we choose the harmonic
partition
$\alpha_H$ we
obtain the $\alpha_{H}$-Farey map $F_{\alpha_H}$, which is given explicitly by
\[
F_{\alpha_H}(x)=\left\{
        \begin{array}{ll}
          2-2x, & \hbox{for $x\in A_1$;} \\
          \frac{n+1}{n-1}x-\frac1{n(n-1)}, & \hbox{for $x\in A_n$, $n
	  \geq 2$};\\
	  0 , & \hbox{for $x=0$.}
        \end{array}
      \right.
\]
From the map $F_{\alpha_H}$, by the method of  Lemma \ref{lem:2.1}, we
obtain the alternating L\"{u}roth map $L_{\alpha_H}$. Recall from the introduction that this map is given by
\[
L_{\alpha_H}(x)=\left\{
        \begin{array}{ll}
          -n(n+1)x+(n+1), &  \hbox{for $x\in A_n$, $n \in \N$;}\\
	 0 , & \hbox{for $x=0$.}
        \end{array}
      \right.
        \]
The corresponding $\alpha_{H}$-L\"uroth expansion of some arbitrary
$x = [\ell_{1},\ell_{2},\ldots]_{\alpha_{H}}\in\mathcal{U}$ is given by
\[
x=\sum_{n=1}^\infty\left((-1)^{n-1}(\ell_n+1)\prod_{k=1}^n(\ell_k(\ell_{k}+1))^{-1}
\right).
\]
 Also, the Lebesgue measure
of the cylinder set  $C_{\alpha}( \ell_1, \ldots, \ell_k)$ is equal to
$1/(\ell_1(\ell_1+1)\cdots \ell_k(\ell_k+1))$.

\begin{figure}[h]\center
\includegraphics[width=0.4\textwidth]{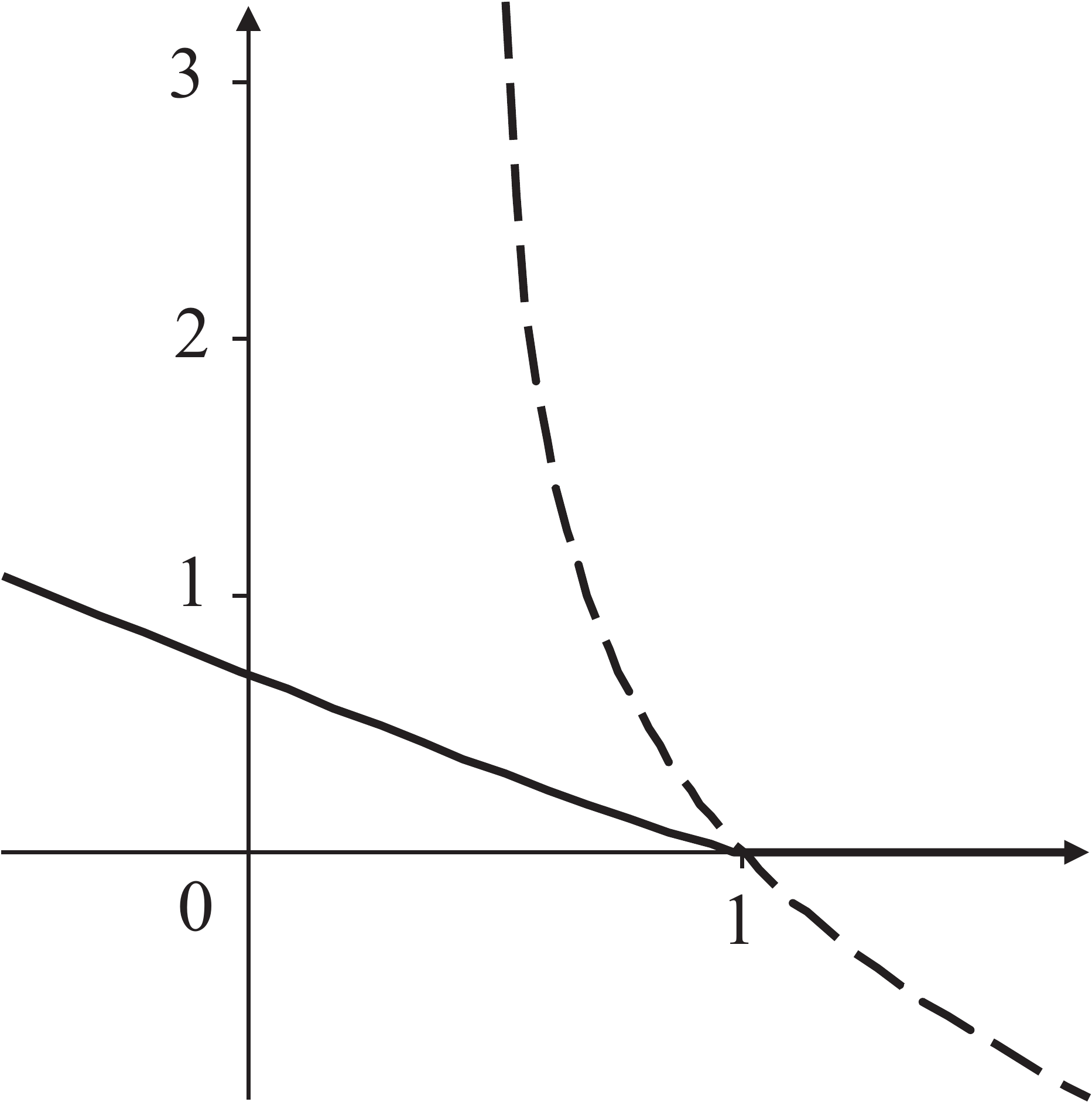}\hspace{0.1\textwidth}
\includegraphics[width=0.4\textwidth]{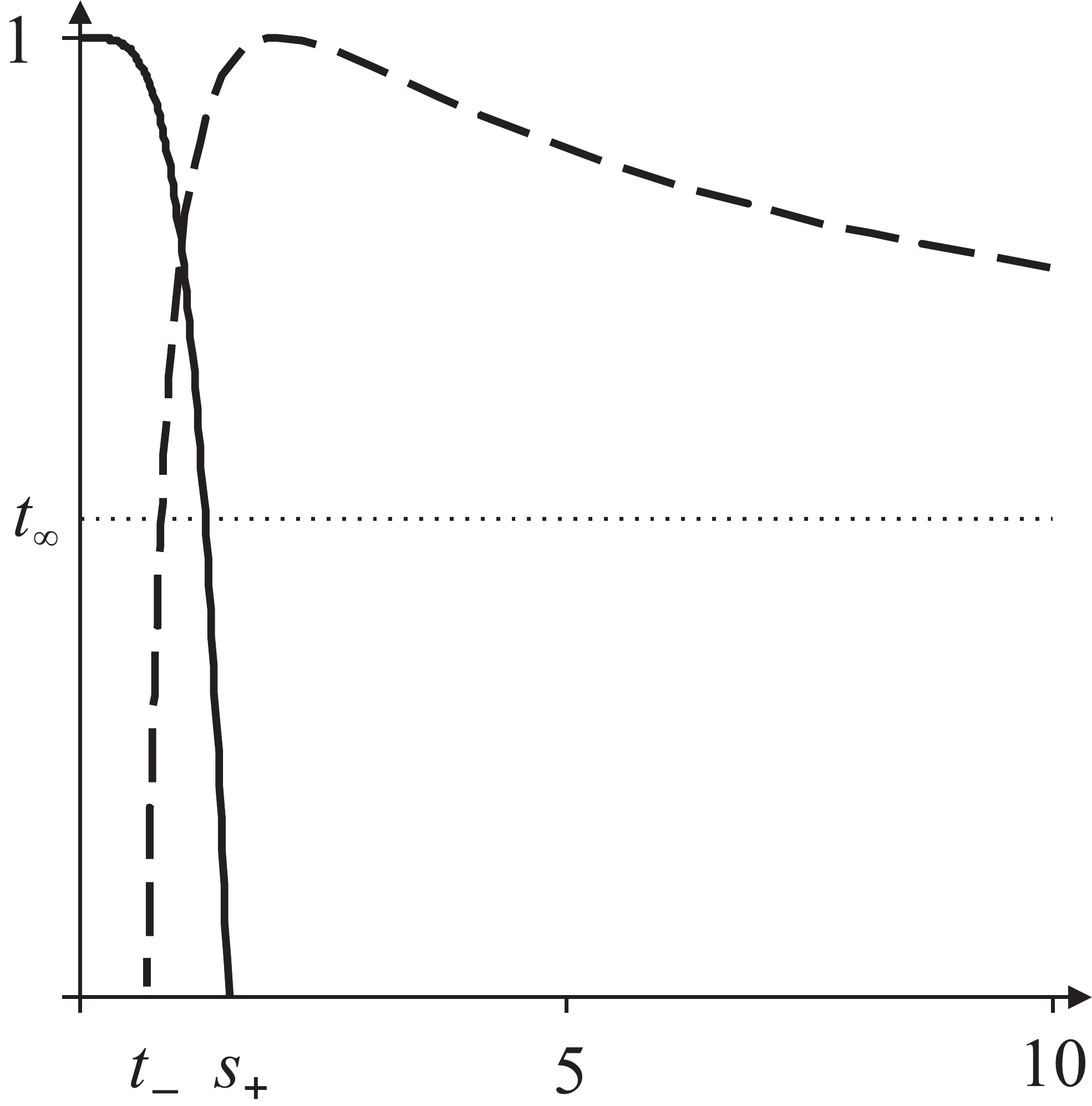}
\caption{{\bf Infinite critical value ${p(t_\infty)=\infty}$ and no phase transition for the $\alpha_H$-Farey
free energy function and the  $\alpha_H$-L\"uroth
pressure function.}  The figure shows the  $\alpha_H$-Farey free energy $v$ (solid line),
the $\alpha_H$-L\"uroth pressure function $p$ (dashed line), and the associated dimension
graphs $\sigma_{\alpha}$ and $\tau_{\alpha}$ of the alternating L\"uroth system. Here,
$t_{-}= \log 2, t_{\infty}=1/2$ and $s_{+} = (\log 6)/2$. Both $F_{\alpha_H}$ and $L_{\alpha_H}$  experience
no phase transition.}
\label{fig:PressureSpecCL}
\end{figure}

 \begin{figure}[h]\center
 \includegraphics[width=0.38\textwidth]{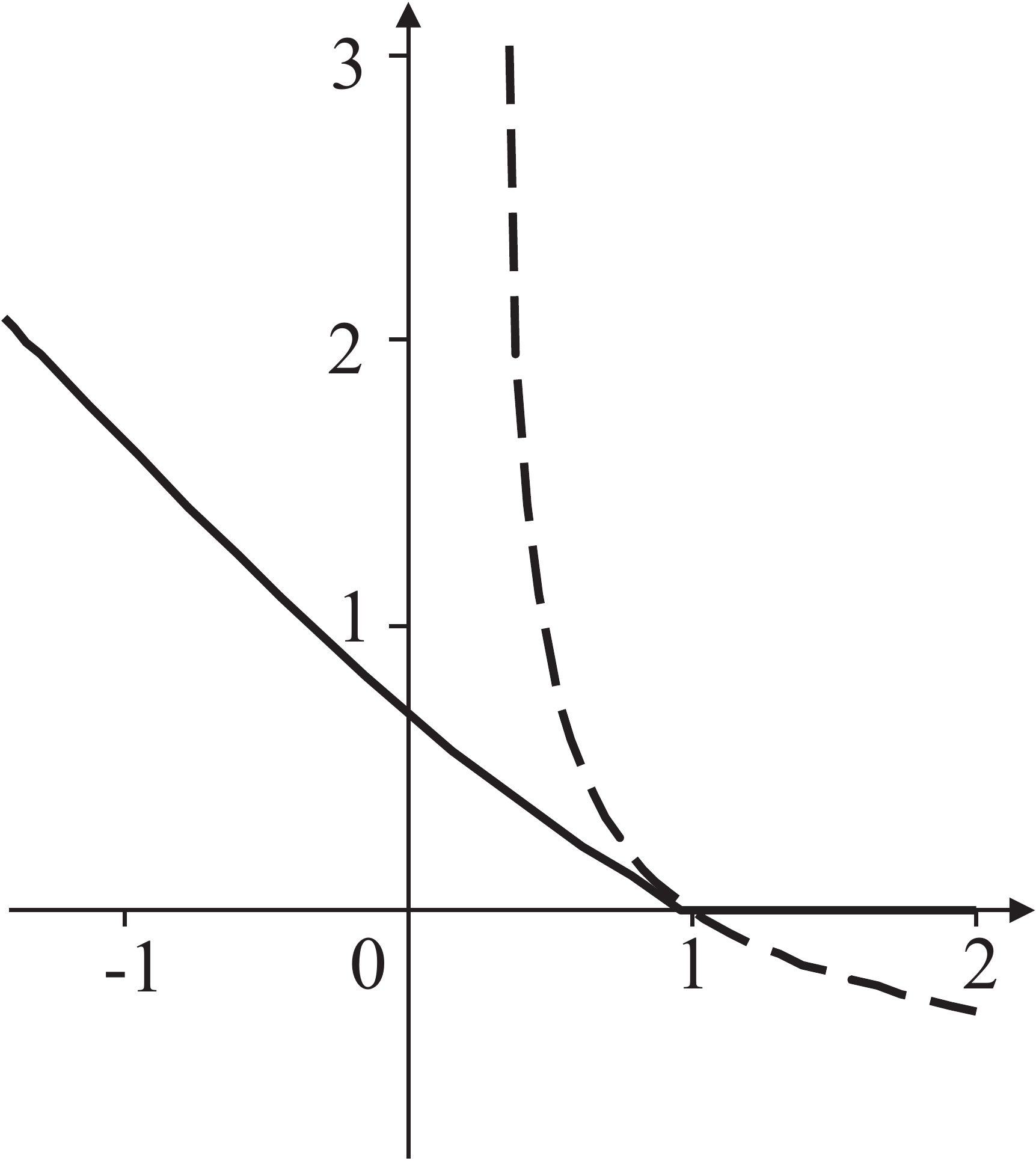}\hspace{0.1\textwidth}
 \includegraphics[width=0.4\textwidth]{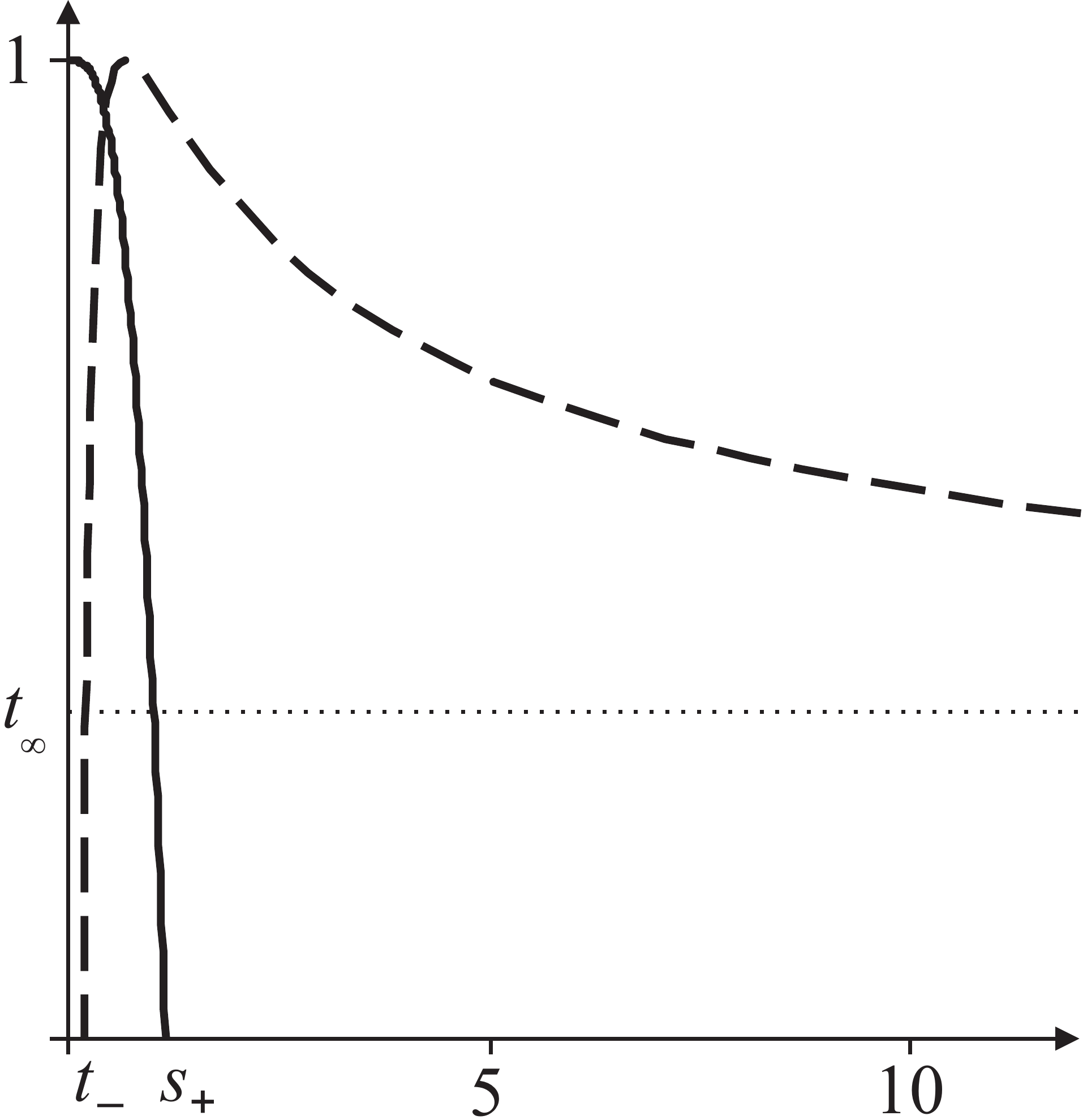}
 \caption{{\bf Phase transition  for the  $\alpha$-Farey free energy function, no phase transition for the
 the $\alpha$-L\"uroth pressure function and $\alpha$ expansive.} The $\alpha$-Farey free energy $v$ (solid line),
 the $\alpha$-L\"uroth pressure function $p$ (dashed line), and the
 associated dimension  graphs  for $a_n:=\zeta\left(3\right)^{-1}n^{-3}$.
 Here,  $F_{\alpha}$ has a phase transition, namely, $p$ is not differentiable at $1$,
 whereas $L_{\alpha}$ exhibits no phase transition and ${p(t_\infty)=\infty}$.}
 \label{fig:PressureSpectrum3}
 \end{figure}

 \begin{figure}[h]\center
  \includegraphics[width=0.4\textwidth]{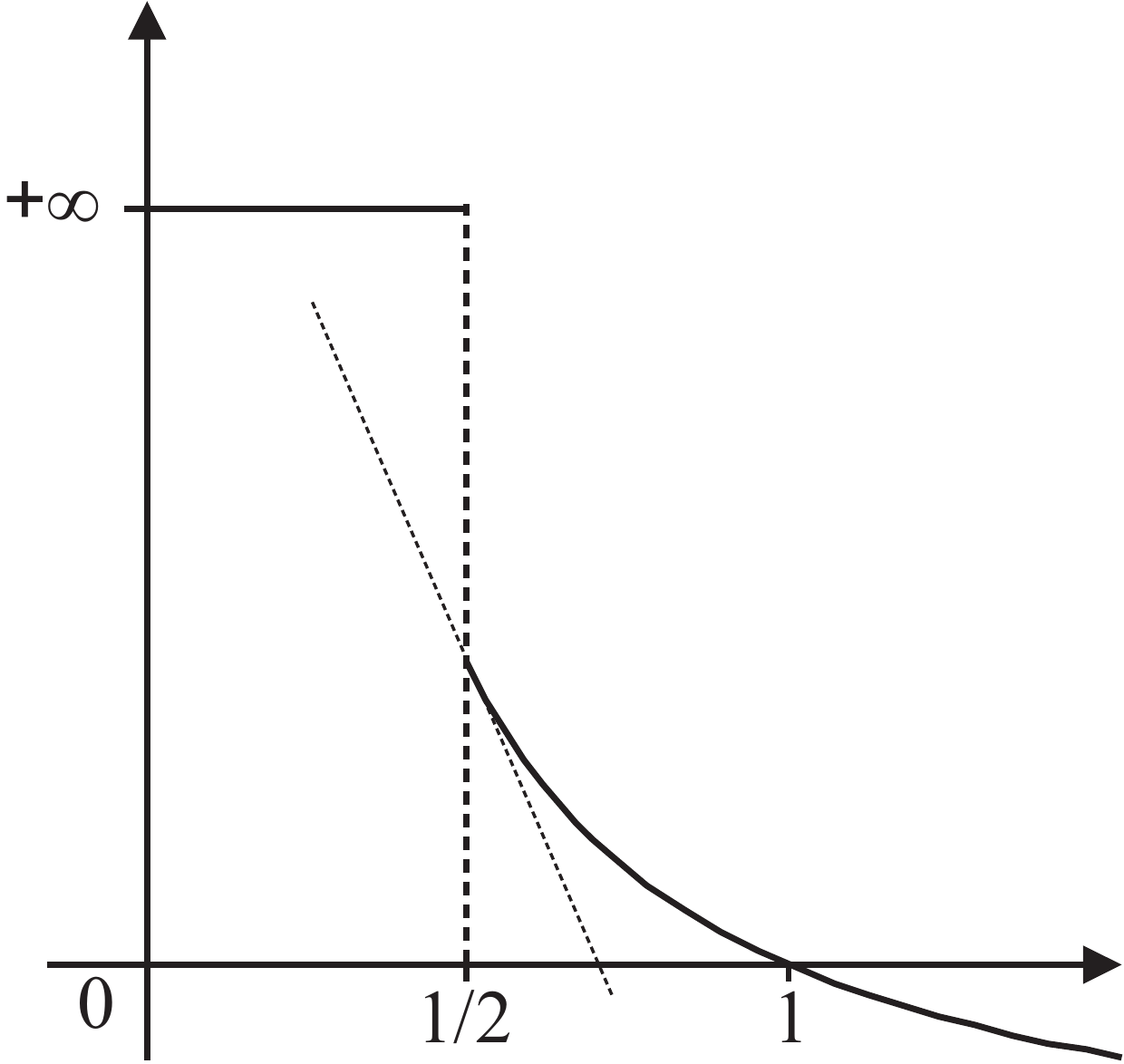}\hspace{0.1\textwidth}
  \includegraphics[width=0.4\textwidth]{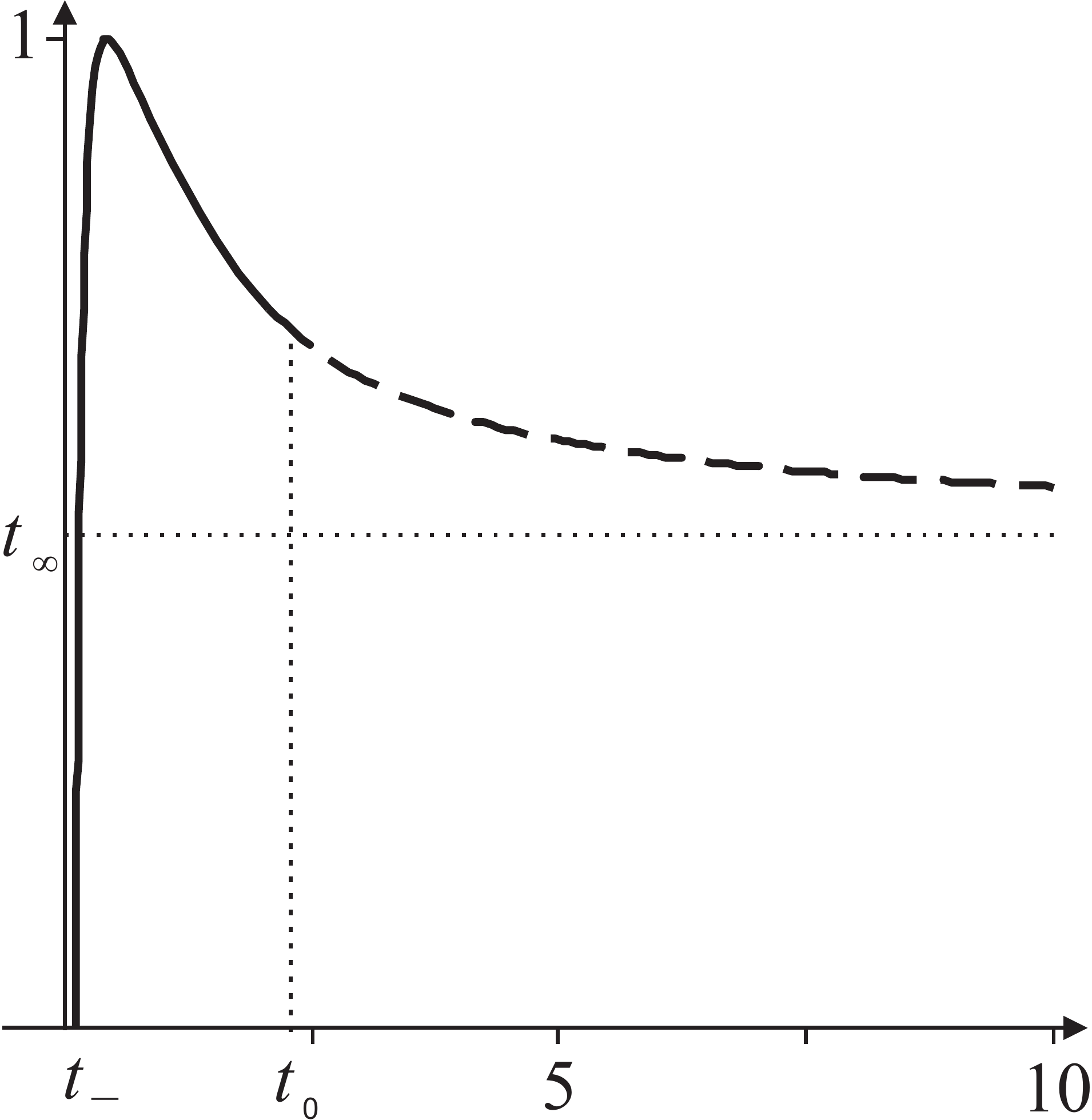}
  \caption{{\bf  Finite critical value ${p(t_\infty)<\infty}$ with phase transition for the $\alpha$-L\"uroth
pressure function and $\alpha$ expansive.}  The  $\alpha$-L\"uroth pressure function $p$, and the
  associated dimension  graphs for the  $\alpha$-L\"uroth system with $a_n:=n^{-2}\cdot (\log(n+5))^{-12}/C$, where $C:=\sum_{n\geq1} n^{-2}\cdot (\log(n+5))^{-12}$.
  In this case $t_\infty =1/2$ and $p(1/2)<\infty $ and $L_{\alpha}$ has a phase transition, namely, $v$ is not differentiable at $1/2$.}
  \label{fig:PressureSpectrumPhaseTransition}
  \end{figure}
 \begin{figure}[h]\center
  \includegraphics[width=0.34\textwidth]{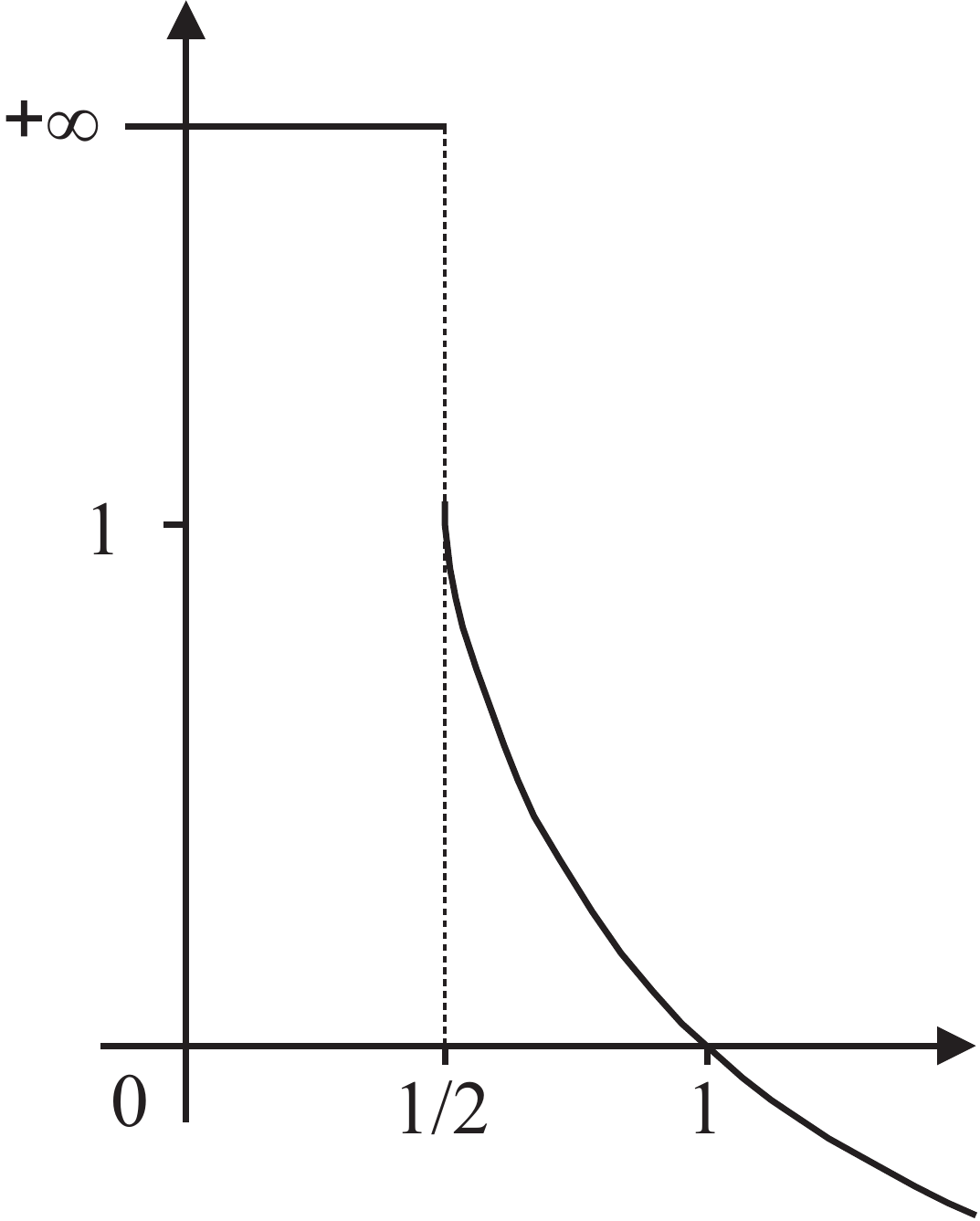}\hspace{0.1\textwidth}
  \includegraphics[width=0.42\textwidth]{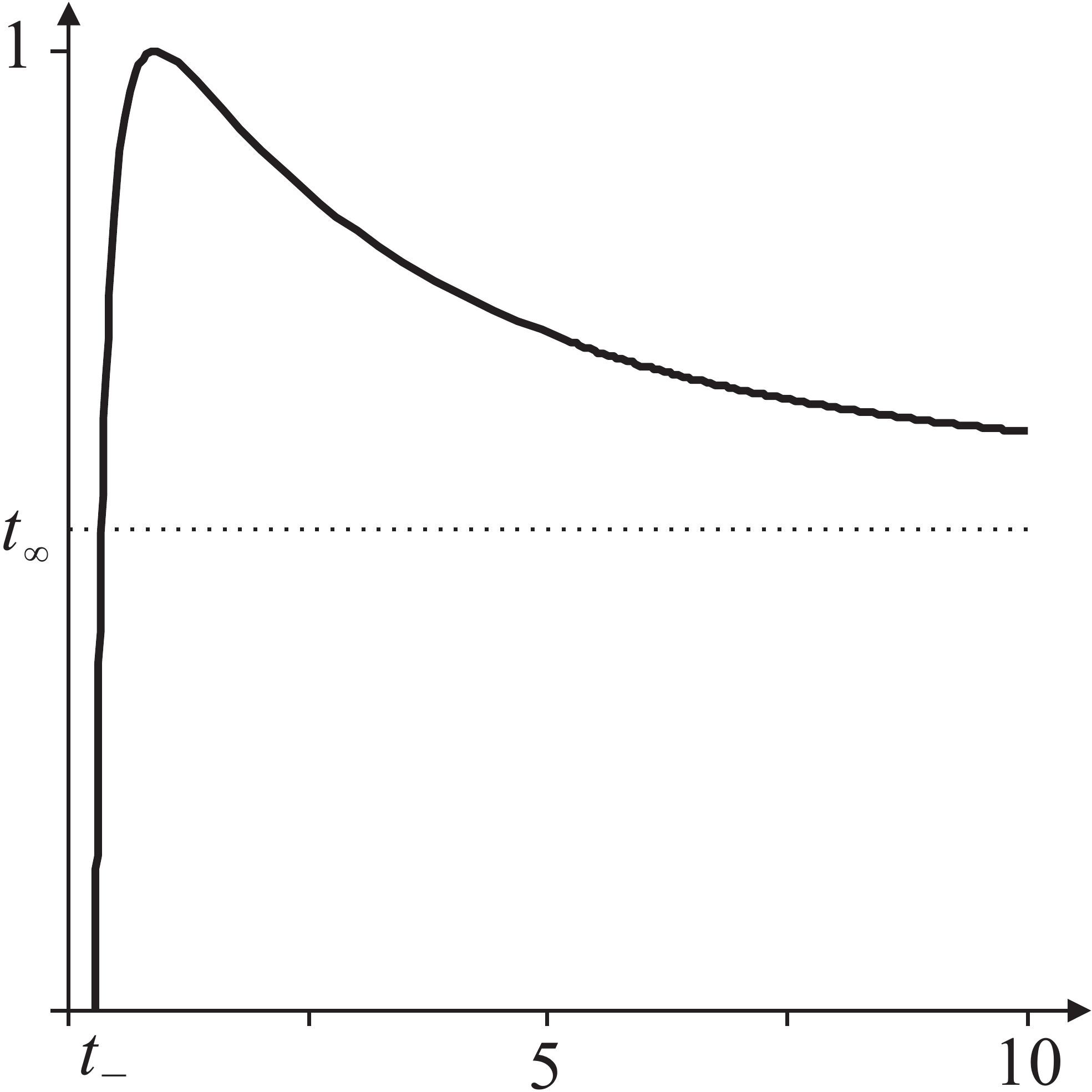}
  \caption{{\bf  Finite critical value ${p(t_\infty)<\infty}$ and no phase transition for the $\alpha$-L\"uroth
pressure function and $\alpha$ expansive.} The  $\alpha$-L\"uroth pressure function $p$, and the
  associated dimension  graphs for the  $\alpha$-L\"uroth system with $a_n:=n^{-2}\cdot (\log(n+5))^{-4}/C$, where $C:=\sum_{n\geq1} n^{-2}\cdot (\log(n+5))^{-4}$.  In this case $t_\infty =1/2$ and $p(1/2)<\infty $,
   but  $L_{\alpha}$ exhibits no phase transition.}
  \label{fig:PressureSpectrumNoPhaseTransition}
  \end{figure}

 \begin{figure}[h]\center
 \includegraphics[width=0.32\textwidth]{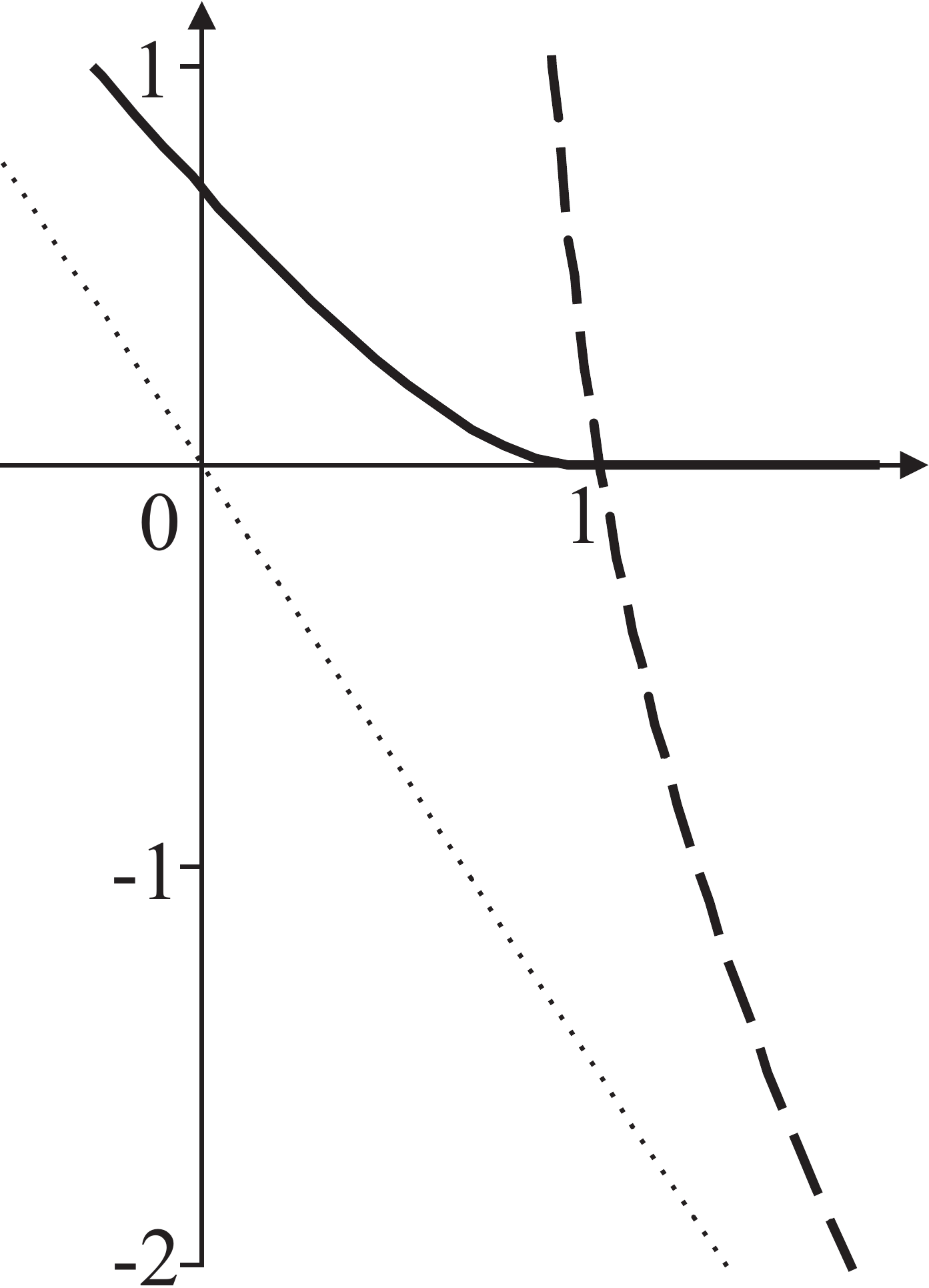}\hspace{0.1\textwidth}
 \includegraphics[width=0.4\textwidth]{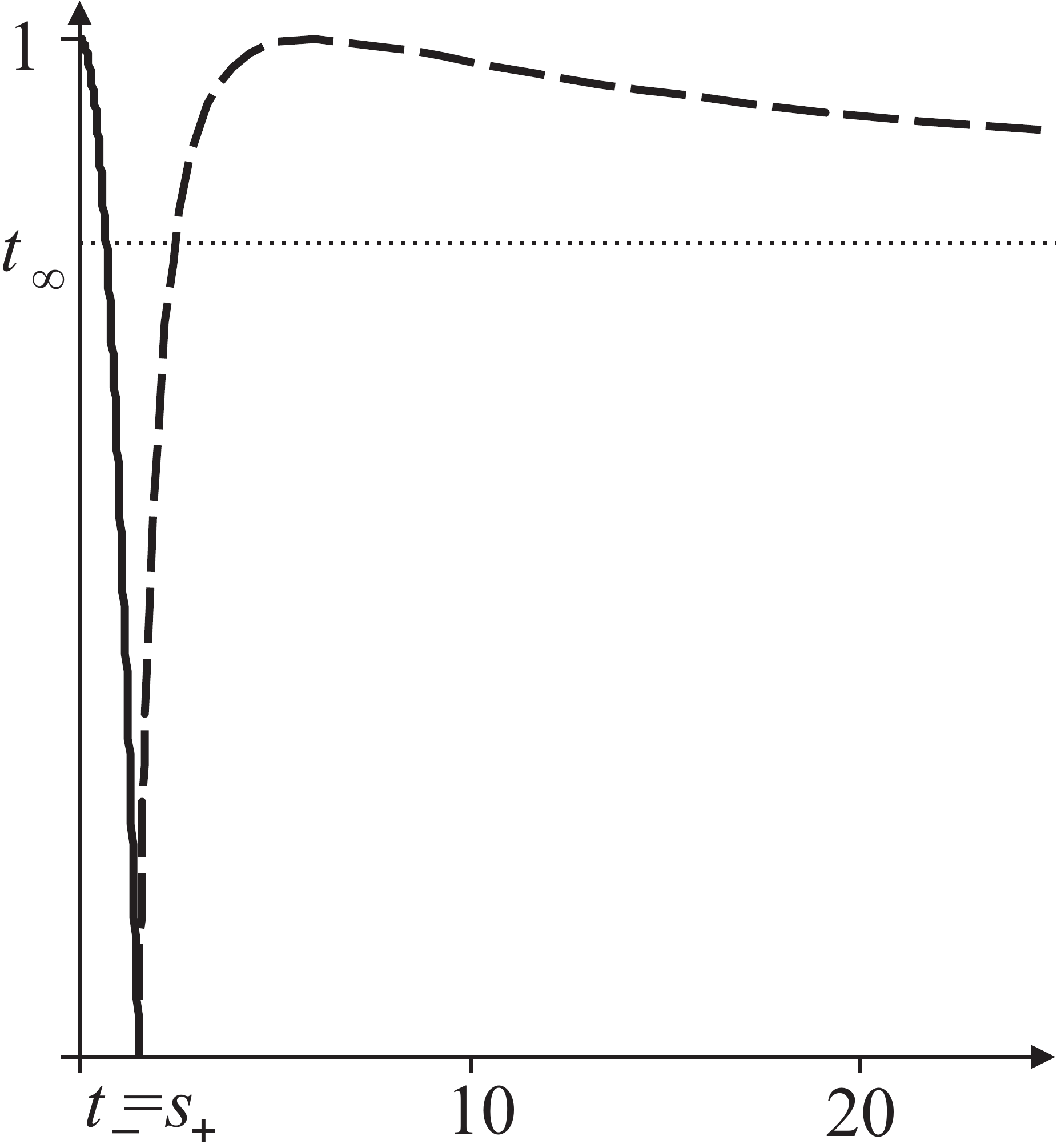}
 \caption{{\bf The Farey spectrum and the L\"uroth spectrum intersect in a single point, for $\alpha$ expansive.} The $\alpha$-Farey free energy $v$ (solid line),
 the $\alpha$-L\"uroth pressure function $p$ (dashed line), and the
 associated  dimension
 graphs for
 $a_n:=\zeta\left(5/4\right)^{-1}n^{-5/4}$.
 Here, $F_{\alpha}$ exhibits no phase transition. } \label{fig:PressureSpectrum4_5}
 \end{figure}

 \begin{figure}[h]\center
  \includegraphics[width=0.4\textwidth]{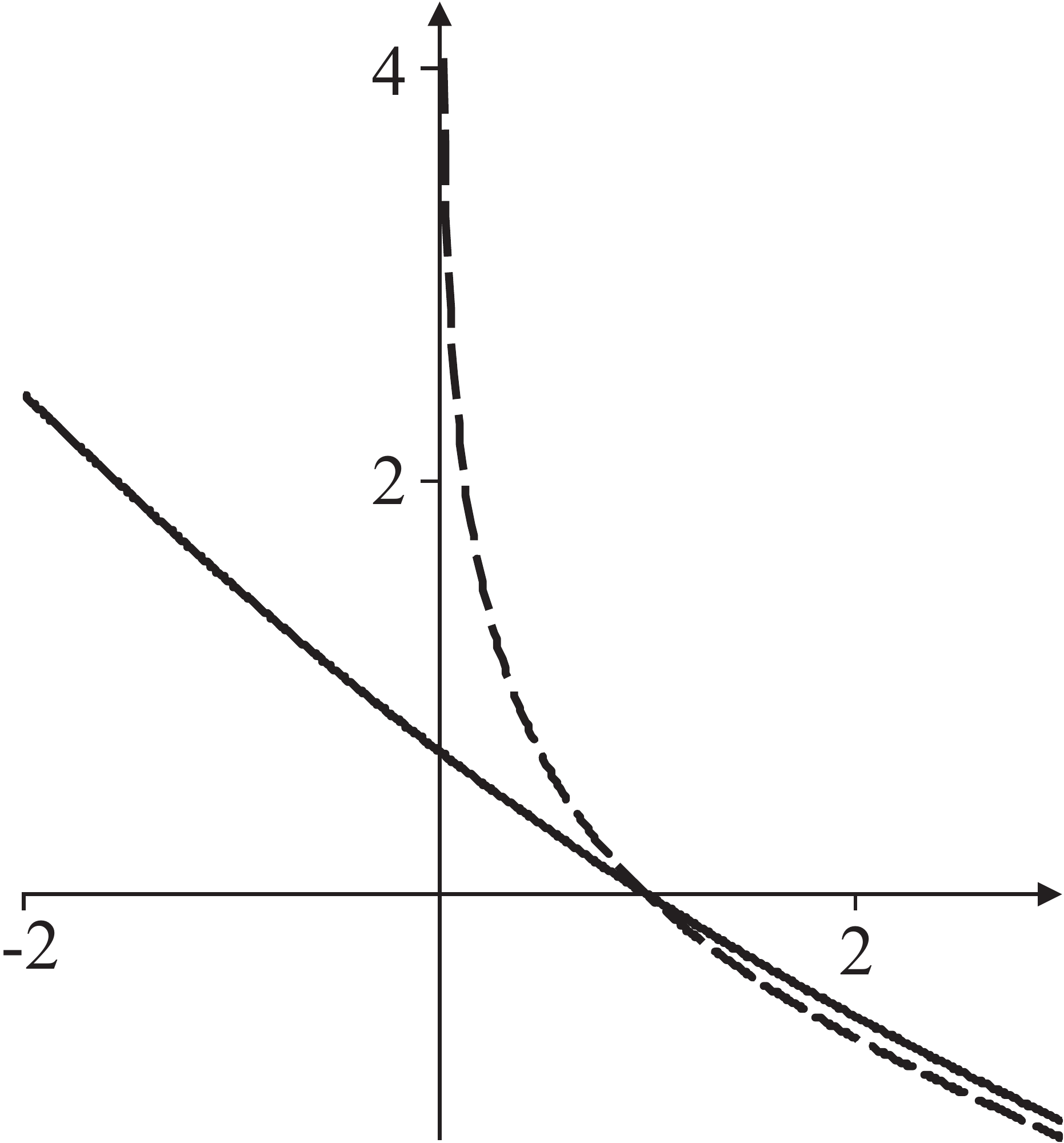}\hspace{0.1\textwidth}
  \includegraphics[width=0.4\textwidth]{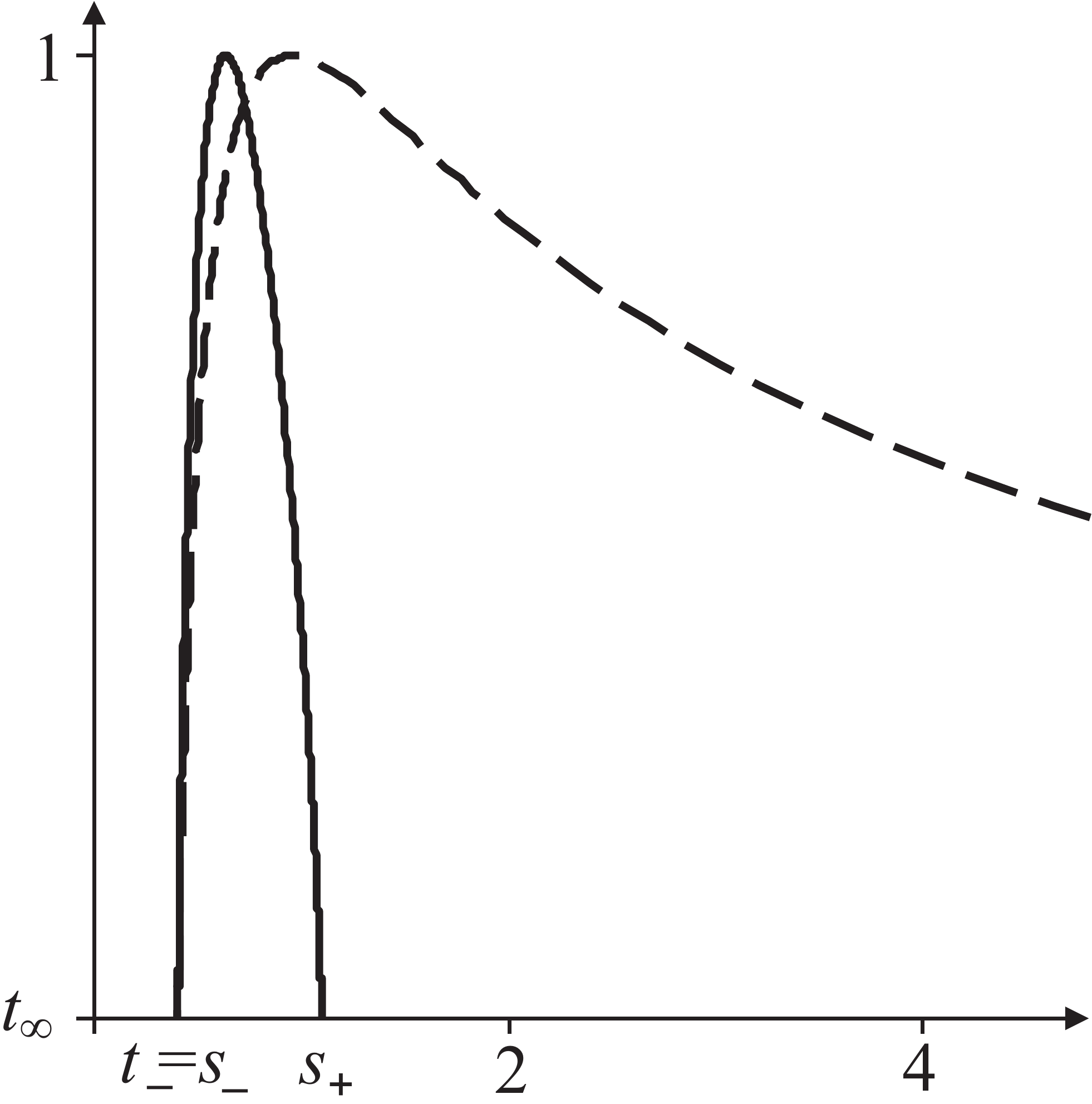}
  \caption{{\bf The Farey spectrum is completely contained in the L\"uroth spectrum, for $\alpha$ expanding.} The $\alpha$-Farey free energy $v$ (solid line),
  the $\alpha$-L\"uroth pressure function $p$ (dashed line), and the
  associated dimension
  graphs for the $\alpha$-Farey and $\alpha$-L\"uroth systems with $a_n:=2\cdot3^{-n}$, $n\in\N$.
  The $\alpha$-Farey system is given in this situation by the tent map
  with slopes $3$ and $-3/2$.}\label{fig:PressureSpectrumTent3_2}
  \end{figure}

Note that the H\"older exponent of the map $\theta_{\alpha_H}$ is
equal to $2\log 2/\log 6$. Also,
it is immediately clear that the invariant measure $\nu_{\alpha_{H}}$ associated to $F_{\alpha_H}$ is
infinite, and that the  density function of $\nu_{\alpha_{H}}$ with
respect to $\lambda$ is equal to the step function  $\sum_{n=1}^\infty (n+1)\1_{A_n}$.

\proc{Remark.}
Suppose that in the definition of $F_{\alpha_H}$ given above we were to choose
$x\mapsto 2x-1$ instead of $x\mapsto 2-2x$ for the right-hand branch of the
$\alpha_H$-Farey map, with $1/2\mapsto 0$. Then, the jump transformation of  this new
non-alternating version of $F_{\alpha_H}$  coincides
with the  actual classical L\"{u}roth map
${L}:\mathcal{U}\to \mathcal{U}$, which generates the series expansion of real numbers
introduced by L\"{u}roth in \cite{L} and which
is given  in our terms by
\[
L(x) = \left\{
        \begin{array}{ll} n(n+1)x-n, & \text{ for }x\in [1/(n+1),1/n), \ n \geq2;
        \\2x-1 , & \hbox{  for $x\in[1/2,1]$.}
        \\0 , & \hbox{  for $x=0$.}
        \end{array}
      \right.
\]
The series expansion in this case is given by
\[
x=\sum_{n=1}^\infty\left(\ell_n\prod_{k=1}^n(\ell_k(\ell_{k}+1))^{-1}\right),
\]
where again all of the $\ell_i$ are natural numbers. Notice that the atoms of the partition behind the map $L$ are slightly different to the atoms $A_n$ of $\alpha_H$, they are right closed and left open intervals, except for the equivalent of $A_1$, which is the closed interval $[1/2, 1]$.
\medbreak

 We now consider the $\alpha_H$-sum-level sets.
 The reader might like to see that the Lebesgue measures of the  first members of the sequence $\left(\mathcal{L}^{(\alpha_H)}_{n}\right)$
are as follows:

 \[
\lambda(\mathcal{L}^{(\alpha_H)}_{0})=1,\;\lambda(\mathcal{L}^{(\alpha_H)}_{1})=\frac{1}{2},\;\lambda(\mathcal{L}^{(\alpha_H)}_{2})=\frac{5}{12},\;\lambda(\mathcal{L}^{(\alpha_H)}_{3})=\frac{3}{8},\;\lambda(\mathcal{L}^{(\alpha_H)}_{4})=\frac{251}{720}.\]

 Since the Lebesgue measure of the sum-level set $\mathcal{L}_n$
 associated with the map $L$ coincides with the
 Lebesgue measure of the sum-level set ${\mathcal{L}}^{(\alpha_{H})}_{n}$,  Theorem \ref{renewal}  gives the following corollaries.
\begin{cor}
 $\displaystyle{\lim_{n \to \infty} \lambda
 \left(\mathcal{L}^{(\alpha_H)}_n\right)= \lim_{n \to \infty} \lambda
 \left(\mathcal{L}_n\right)=0}$.    \end{cor}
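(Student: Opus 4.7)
The plan is to reduce both limits to a single application of Theorem \ref{renewal} (1) applied to the harmonic partition $\alpha_H$. First I would verify that $\alpha_H$ is of infinite type. Since the atoms are $a_n = 1/(n(n+1))$, the tails are $t_n = \sum_{k\geq n} a_k = 1/n$, and hence $\sum_{n=1}^{\infty} t_n = \sum_{n=1}^{\infty} 1/n = \infty$. By Theorem \ref{renewal} (1), this immediately gives
\[
\lim_{n \to \infty} \lambda\left(\mathcal{L}^{(\alpha_H)}_{n}\right) = 0.
\]

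The remaining task is to justify the assertion $\lambda(\mathcal{L}_n) = \lambda(\mathcal{L}^{(\alpha_H)}_{n})$, which reduces the classical L\"uroth case to the alternating one. For this I would argue by a direct comparison of cylinder sets. Both the classical L\"uroth map $L$ and the alternating L\"uroth map $L_{\alpha_H}$ are piecewise linear with branches defined on intervals whose Lebesgue measures are precisely $a_n = 1/(n(n+1))$; indeed, in both cases the $n$-th branch has slope of absolute value $n(n+1)$. Consequently, for any finite tuple $(\ell_1, \ldots, \ell_k)$, the corresponding cylinder sets (for $L$ and for $L_{\alpha_H}$) are intervals of the same Lebesgue measure $\prod_{i=1}^{k} a_{\ell_i}$; only their positions inside $\mathcal{U}$ differ, because the atoms are ordered in opposite directions. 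Since, by definition,
\[
\mathcal{L}_n^{(\alpha_H)} = \bigsqcup_{k \in \N} \bigsqcup_{\ell_1 + \cdots + \ell_k = n} C_{\alpha_H}(\ell_1, \ldots, \ell_k),
\]
and an analogous disjoint-union description holds for $\mathcal{L}_n$, summing yields $\lambda(\mathcal{L}_n) = \lambda(\mathcal{L}^{(\alpha_H)}_n)$ for every $n \in \N$.

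Combining these two observations gives
\[
\lim_{n \to \infty} \lambda\left(\mathcal{L}_n\right) = \lim_{n \to \infty} \lambda\left(\mathcal{L}^{(\alpha_H)}_n\right) = 0,
\]
as claimed. There is no real obstacle here beyond the bookkeeping for the cylinder-set comparison; the substantive content is entirely encoded in Theorem \ref{renewal} (1) and in the harmonic decay $t_n = 1/n$.
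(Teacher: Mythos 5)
Your proposal is correct and follows the paper's own route: the paper likewise derives the corollary by noting that $\lambda(\mathcal{L}_n)=\lambda(\mathcal{L}^{(\alpha_H)}_n)$ (since corresponding cylinder sets of $L$ and $L_{\alpha_H}$ have the same Lebesgue measure $\prod_i a_{\ell_i}$) and then applying Theorem \ref{renewal} (1) to the harmonic partition, which is of infinite type because $t_n=1/n$. You merely spell out the cylinder-set comparison that the paper leaves implicit, which is fine.
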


\begin{cor}
For the classical and for the alternating L\"uroth map  the following
hold,
  for $n$ tending to infinity.
\begin{enumerate}
  \item $ \displaystyle{\sum_{k=1}^n\lambda\left(\mathcal{ L}_n  \right)=\sum_{k=1}^n\lambda\left({\mathcal L}_n^{(\alpha_H)}
  \right)\sim n\left(\sum_{k=1}^n
  \frac1k\right)^{-1}\sim \frac{n}{\log n}}$;
  \item $\displaystyle{\lambda\left(\mathcal{L}_n\right)=
  \lambda\left({\mathcal L}_n^{(\alpha_H)}\right)
  \sim\left(\sum_{k=1}^n\frac1k\right)^{-1}\sim\frac1{\log n}}$.

\end{enumerate}
\end{cor}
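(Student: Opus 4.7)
The plan is to apply Theorem \ref{renewal} directly to the harmonic partition $\alpha_H$, identifying the correct exponent $\theta$ and the constants $K_\alpha$ and $k_\alpha$, and then to transfer the conclusion to the classical L\"uroth map via the equality of sum-level set measures already observed in the paragraph preceding the corollary.

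First I would collect the basic numerics for $\alpha_H$. Since $a_n = 1/(n(n+1))$, a telescoping gives $t_n = \sum_{k=n}^{\infty} 1/(k(k+1)) = 1/n$. In particular $t_n = \psi(n)\, n^{-1}$ with $\psi \equiv 1$, so $\alpha_H$ is expansive of exponent $\theta = 1$, and $\sum_n t_n = \sum_n 1/n$ diverges, so $\alpha_H$ is of infinite type. Also $\sum_{k=1}^{n} t_k = \sum_{k=1}^{n} 1/k \sim \log n$.

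Next I would plug $\theta = 1$ into the two renewal constants of Theorem \ref{renewal}(2): for the weak law, $K_{\alpha_H} = (\Gamma(2-1)\Gamma(1+1))^{-1} = (\Gamma(1)\Gamma(2))^{-1} = 1$, and for the strong law (which applies since $\theta = 1 \in (1/2, 1]$), $k_{\alpha_H} = (\Gamma(2-1)\Gamma(1))^{-1} = 1$. The weak renewal law then yields
\[
\sum_{k=1}^{n} \lambda\!\left(\mathcal{L}^{(\alpha_H)}_k\right) \sim n\left(\sum_{k=1}^{n} \tfrac{1}{k}\right)^{\!-1} \sim \frac{n}{\log n},
\]
and the strong renewal law yields
\[
\lambda\!\left(\mathcal{L}^{(\alpha_H)}_n\right) \sim \left(\sum_{k=1}^{n} \tfrac{1}{k}\right)^{\!-1} \sim \frac{1}{\log n}.
\]

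Finally, as already noted just before the statement of the corollary, the atoms of the partition underlying the classical L\"uroth map $L$ differ from the atoms $A_n$ of $\alpha_H$ only by interchanging left-closed/right-open with right-closed/left-open endpoints, so the Lebesgue measures of the corresponding sum-level sets coincide: $\lambda(\mathcal{L}_n) = \lambda(\mathcal{L}^{(\alpha_H)}_n)$ for every $n$. Substituting this identity into the two asymptotics above gives both items of the corollary. There is no real obstacle here: the whole argument is a one-paragraph specialisation of Theorem \ref{renewal}, the only thing to check being that the harmonic partition falls within its hypotheses (expansive of exponent $\theta = 1$), and that the gamma-function constants evaluate to $1$.
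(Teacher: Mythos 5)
Your proposal is correct and is exactly the argument the paper intends: the corollary is stated there as an immediate consequence of Theorem \ref{renewal} once one notes $t_n=1/n$ (so $\alpha_H$ is expansive of exponent $\theta=1$ with $\psi\equiv1$), that both gamma-function constants evaluate to $1$, and that $\lambda(\mathcal{L}_n)=\lambda(\mathcal{L}_n^{(\alpha_H)})$. Your write-up simply makes these specialisations explicit, with no difference in method.
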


For the outcome of the Lyapunov spectra associated with the harmonic
partition we refer to Fig. \ref{fig:PressureSpecCL}. Also, various different phenomena
which arise from particularly chosen partitions are briefly discussed in Fig.
\ref{fig:PressureSpectrum3}, \ref{fig:PressureSpectrumPhaseTransition},
\ref{fig:PressureSpectrumNoPhaseTransition},  \ref{fig:PressureSpectrum4_5},
\ref{fig:PressureSpectrumTent3_2} (see also Remark 2 in the
introduction).

\end{document}